\newtheorem{thm}[equation]{Theorem}
\newtheorem{lem}[equation]{Lemma}
\newtheorem{prop}[equation]{Proposition}
\newtheorem{cor}[equation]{Corollary}
\newtheorem{defi}[equation]{Definition}
\newtheorem{rem}[equation]{Remark}
\newtheorem{claim}[equation]{Claim}
\numberwithin{equation}{section}
\def\club{{\huge $\mathbf{\clubsuit}$}}
\newcommand{\X}{\mathcal{X}}
\newcommand{\R}{\mathbb{R}}
\newcommand{\N}{\mathbb{N}}
\newcommand{\e}{\varepsilon}
\newcommand{\1}{\mathds{1}}
\renewcommand{\P}{\mathbb{P}}
\newcommand{\E}{\mathbb{E}}
\newcommand{\ent}{\operatorname{Ent}}
\newcommand{\Tw}{\widetilde{\mathcal{T}}}
\begin{document}
\title[Displacement convexity  on graphs]
{Displacement convexity of entropy and related inequalities on graphs}
\author{Nathael Gozlan, Cyril Roberto, Paul-Marie Samson, Prasad Tetali}

\thanks{Supported by the grants ANR 2011 BS01 007 01,  ANR 10 LABX-58, and the NSF DMS 1101447.}

\date{\today}

\address{Universit\'e Paris Est Marne la Vall\'ee - Laboratoire d'Analyse et de Math\'e\-matiques Appliqu\'ees (UMR CNRS 8050), 5 bd Descartes, 77454 Marne la Vall\'ee Cedex 2, France}
\address{Universit\'e Paris Ouest Nanterre La D\'efense - Modal'X, 200 avenue de la R\'epublique 92000 Nanterre, France}
\address{School of Mathematics \& School of Computer Science, Georgia Institute of Technology,
Atlanta, GA 30332}
\email{nathael.gozlan@univ-mlv.fr, cyril.roberto@math.cnrs.fr,  paul-\linebreak marie.samson@univ-mlv.fr, tetali@math.gatech.edu}
\keywords{Displacement convexity, transport inequalities, modified logarithmic-Sobolev inequalities, Ricci curvature}
%\subjclass{60E15, 32F32 and 26D10}

\begin{abstract}
We introduce the notion of an interpolating path on the set of probability measures on finite graphs.  Using this notion, we first prove a displacement convexity property of entropy along such a path and derive Prekopa-Leindler type inequalities, a Talagrand transport-entropy inequality, certain HWI type as well as  log-Sobolev type inequalities in discrete settings. To illustrate through examples,  we apply our results to the complete graph and to the hypercube for which our results are optimal -- by passing to the limit, we recover the classical log-Sobolev inequality for the standard Gaussian measure with the optimal constant. 
\end{abstract}

\maketitle

\section{Introduction}

In recent years, Optimal Transport and its link with the Ricci
curvature in Riemannian geometry attracted a considerable amount of attention. The extensive
modern book by C. Villani \cite{villani} is one of the main references on this topic. However, while a
lot is now known in the Riemannian setting (and more generally in geodesic spaces), very little is
known so far in discrete spaces (such as finite graphs or finite Markov chains), with the notable
exception of some notions of (discrete) Ricci curvature proposed recently by several authors --
unfortunately there is not yet a satisfactory (universally agreed upon) resolution even there --
see  Bonciocat-Sturm \cite{bonciocat}, Erbar-Maas \cite{erbar-maas}, Hillion \cite{hillionthesis}, Joulin \cite{joulin}, Lin-Yau \cite{lin-yau},    Maas \cite{maas},  Mielke \cite{mielke}, Ollivier \cite{ollivier}, and recent works on the displacement convexity of  entropy by Hillion \cite{hillion}, Lehec\cite{lehec} and L\'eonard \cite{leonard}. 

In particular,  the notions of Transport inequalities, 
HWI inequalities, interpolating paths on the measure space, displacement convexity of entropy, are yet to be properly introduced, analyzed and understood in discrete spaces. 
This is the chief aim of the present paper, and of a companion paper \cite{GRST}.
Due to its theoretical as well as applied appeal, this subject is at the intersection of many
areas of Mathematics, such as Calculus of Variations, Probability Theory, Convex Geometry
and Analysis, as well as Combinatorial Optimization.

\medskip

In order to present our results, let us first introduce some of the relevant notions in the continuous framework of geodesic spaces, see \cite{villani}.

A  complete, separable, metric space $(\X,d)$ is said to be a \emph{geodesic space}, if for all $x_0,x_1 \in \X$, there exists at least one path $\gamma \colon [0,1] \mapsto \X$ such that $\gamma(0)=x_0, \gamma(1)=x_1$ and 
$$
d(\gamma(s),\gamma(t))=|t-s|d(x_0,x_1), \qquad \forall s,t \in [0,1].
$$
Such a path is then called a \emph{constant speed geodesic} between $x_0$ and $x_1$.

Then, for $p \geq 1$, let $\mathcal{P}_p(\X)$ be the set of Borel probability measures on $\X$ having a finite $p$-th moment, namely
$$
\mathcal{P}_p(\X) := \left\{ \mu \mbox{ Borel probability measure}: \int_\X d(x_o,x)^p \mu(dx) < + \infty  \right\} \,,
$$
where $x_o \in \X$ is arbitrary ($\mathcal{P}_p(\X)$ does not depend on the choice of the point $x_o$)
and define the following $L_p$-Wasserstein distance: for $\nu_0,\nu_1 \in \mathcal{P}_p(\X)$, set
\begin{equation} \label{t2}
W_p(\nu_0,\nu_1):= \left( \inf_{\pi \in \Pi(\nu_0,\nu_1)} \left\{ \iint d(x,y)^p \,d\pi(x,y) \right\} \right)^{1/p}\,,
\end{equation}
where $\Pi(\nu_0,\nu_1)$ is the set of couplings of  $\nu_0$ and $\nu_1$. 

The metric space $({\mathcal P}_{p}(\X), W_p)$ is canonically associated to the original metric space $(\X,d)$.
Namely, if $p>1$, $({\mathcal P}_{p}(\X), W_p)$ is geodesic if and only if $(\X,d)$ is geodesic, see \cite{St06a}.

A remarkable and powerful fact is that, when $\X$ is a Riemannian manifold, one can relate the Ricci curvature of the space to the convexity of  entropy along geodesics \cite{mccann,cordero-ms,sturm-vonrenesse,LV09,sturm05,villani03}. 
 More precisely, under the Bakry-Emery ${\rm CD}(K, \infty)$ condition (see e.g. \cite{bakry}), namely if the space 
$(\X,d,\mu)$ is such that $\mathrm{Ric}+\mathrm{Hess}\,V\geq K$, where $\mu(dx)=e^{-V(x)}\,dx$, then one can prove that for all $\nu_0,\nu_1\in\mathcal{P}_{2}(\X)$ whose supports are included in the support of $\mu$, there exists a constant speed ${W}_{2}$-geodesic
$\{\nu_t\}_{t\in [0,1]}$ from $\nu_0$ to $\nu_1$  such that
\begin{equation} \label{dc}
H(\nu_t|\mu) \leq (1-t) H(\nu_0|\mu) + t H(\nu_1|\mu) - \frac{K}{2}t(1-t) {W}_2^2(\nu_0,\nu_1)  \qquad \forall  t\in [0,1], 
\end{equation}
where $H(\nu|\mu)$ denotes the relative entropy of $\nu$ with respect to  $\mu$. 
Equation \eqref{dc} is known as the \emph{$K$-displacement convexity of the entropy}.
In fact, a converse statement also holds: if the entropy is $K$-displacement convex, then the Ricci curvature is bounded below by $K$.  This equivalence was used as a guideline for the definition of the notion of curvature in geodesic spaces by Sturm-Lott-Villani in their celebrated works \cite{LV09,St06a,St06b}. 

Moreover,  it is known that the $K$-displacement convexity of the entropy is a very strong notion that implies many well-known
inequalities in Convex Geometry and in Probability Theory, such as the Brunn-Minkowski inequality, the Prekopa-Leindler inequality, Talagrand's transport-entropy inequality, HWI inequality, log-Sobolev inequality etc., see \cite{villani}.

\bigskip

The question one would like to address is whether one can extend the above theory to discrete settings such as finite graphs, equipped with a set of probability measures on the vertices and with a natural graph distance. 

Let us mention two main obstructions. Firstly, ${W}_{2}$-geodesics do not exist in discrete settings
(the reader can  verify this fact by considering two nearest neighbors $x,y$ in the graph
%%PT: changed \X to G be consistent 
%%
%% $\X$ 
$G=(V,E)$ and  constructing a constant speed geodesic between the two Dirac measures $\delta_x, \delta_y$ at the vertices $x$ and $y$).
On the other hand, the following Talagrand's transport-entropy inequality
\begin{equation} \label{tal}
W_2^2(\nu_0,\mu) \leq C\  H(\nu_0|\mu)\,, \qquad \forall \nu_0 \in \mathcal{P}_2(V)\,
\end{equation}
(for a suitable constant $C>0$) does not hold in discrete settings unless $\mu$ is a Dirac measure!
From these simple observations we deduce that ${W}_{2}$ is not well adapted either for defining the path 
$\{\nu_t\}_{t\in [0,1]}$ or for measuring the  defect/excess  in the convexity of  entropy in a discrete context.

In this paper, our contribution is to introduce the notion of an interpolating path $\{\nu_t\}_{t\in [0,1]}$ 
and of a {\em weak transport cost} $\widetilde{T}_{2}$ (that in a sense goes back to Marton \cite{marton96, marton97}
% \textcolor{red}{Am I right with the citations?}
).  These will in turn help us derive the desired displacement convexity results on finite graphs.

\bigskip

Before presenting our results, we give a brief state of the art of the field (to the best of our knowledge).

In \cite{ollivier-villani}, Ollivier and Villani prove that, on the hypercube $\Omega_n=\{0,1\}^n$, 
for any probability measures $\nu_0, \nu_1$, there exists a probability measure $\nu_{1/2}$ (concentrated on the set of 
mid-points, see \cite{ollivier-villani} for a precise definition) such that
$$
H(\nu_{1/2}|\mu) \leq \frac{1}{2} H(\nu_0|\mu) + \frac{1}{2} H(\nu_1|\mu) - \frac{1}{80n} W_1^2(\nu_0,\nu_1)\,,
$$
where $\mu\equiv 1/2^n$ is the uniform measure and $W_1$ is defined with the Hamming distance. They observe that, this in turn implies some curved Brunn-Minkowski inequality on $\Omega_n$. The constant $1/n$ encodes, in some sense, the discrete Ricci curvature of the hypercube
in accordance with the various definitions of the discrete Ricci curvature (see above for references).

In \cite{erbar-maas}, Erbar and Maas introduce a {\em pseudo} Wasserstein distance $\mathcal{W}_2$ that corresponds to the geodesic distance on the set, $\mathcal{P}(\Omega_n)$, of probability measures on the hypercube $\Omega_n$,  equipped with a Riemannian metric. (In fact, their construction is more general and applies to a wide class of Markov kernels on finite graphs.) This metric is such that the continuous time random walk on the graph becomes a gradient flow of the function $H(\cdot|\mu)$. Moreover they prove, inter alia,  that if $\{\nu_t\}_{t\in [0,1]}$ is a geodesic from $\nu_0$ to $\nu_1$, then
$$
H(\nu_t|\mu) \leq (1-t) H(\nu_0|\mu) + t H(\nu_1|\mu) - \frac{1}{n}t(1-t) \mathcal{W}_2^2(\nu_0,\nu_1) \,,
\qquad \forall t\in[0,1] \,,
$$ 
where $\mu\equiv 1/2^n$ is the uniform measure. Independently, Mielke \cite{mielke} also obtains  similar results.
As a consequence of their displacement convexity property, these authors derive versions of log-Sobolev, HWI and Talagrand's transport-entropy inequalities (involving $\mathcal{W}_2$ and $W_1$ distances) with sharp constants.

In a different direction (at the level of functional inequalities), besides the study of the log-Sobolev inequality which is somehow now classical (see \textit{e.g.}\ \cite{Saloff,ane}),  Sammer and the last named author \cite{sammer-thesis,sammer-tetali} studied Talagrand's inequality in discrete spaces, with $W_1$ on the left hand side of \eqref{tal}. They also derived a discrete analogue of the Otto-Villani result \cite{OV00}: that
a modified log-Sobolev inequality implies the  $W_1$-type Talagrand inequality. 
Connected to this, a few years ago, 
following seminal work of Bobkov and Ledoux \cite{bobkov-ledoux-98}, several researchers
independently realized that modified versions of logarithmic Sobolev inequalities helped capture
refined information that was lost while working with the classic log-Sobolev inequality of 
Gross. In the discrete setting of finite Markov chains, one such modified log-Sobolev inequality
has been instrumental in capturing the rate of convergence to equilibrium in the (relative)
entropy sense, see e.g. \cite{PPP}, \cite{DPPP}, \cite{bobkov-tetali}, \cite{GQ03}, \cite{goel}, \cite{Saloff}, \cite{roberto-thesis}. 
The current state of knowledge in identifying precise sufficient criteria to derive bounds on the
entropy decay (or on the corresponding modified log-Sobolev constants) is unfortunately rather meagre. 
This is an independent motivation for our efforts  at
developing the discrete aspects of the displacement convexity property and related notions.

\bigskip
Now we describe some of the main results of the present paper.
At first, we shall introduce the notion of an interpolating path $\{\nu_t^\pi\}_{t\in [0,1]}$, on the set of probability measures on graphs, between two arbitrary probability measures $\nu_0, \nu_1$. In fact,  we define a {\em family} of interpolating paths, depending on a parameter $\pi \in \Pi(\nu_0,\nu_1)$, which is a coupling of $\nu_0, \nu_1$. The construction of this interpolating path is inspired by a  certain binomial interpolation due to  Johnson \cite{johnson}, see also \cite{hillionthesis,hillion,hillion-johnson}.
In particular, we shall prove that such an interpolating path, for a properly chosen coupling $\pi^*$ -- namely an optimal coupling for $W_1$ -- is actually a $W_1$ constant speed geodesic: \textit{i.e.}\ $W_1(\nu_t^{\pi^*},\nu_s^{\pi^*})=|t-s|W_1(\nu_0,\nu_1)$ for all $s,t \in [0,1]$, with $W_1$ defined with the graph distance $d$ (see Proposition \ref{prop:georgia} below). 
Such a family enjoys a tensorisation (see Lemma \ref{tensor}) that is crucial in our derivation of the displacement convexity property on product of graphs.

Indeed, we shall prove the following tensoring property of a displacement convexity of entropy along the interpolating path $\{\nu_t^\pi\}_{t\in [0,1]}$. This is one of our main results (see below and  Theorem~\ref{th:main}). In order to state the result, we define here the notion of a quadratic cost, which we will elaborate on, in the later sections.

%%PT: moved the definition of I_2(\pi) to here
%%

%For later purposes, it will be convenient to introduce the following notation: 
Let $G=(V,E)$ be a (finite) connected, undirected graph, and let $\mathcal{P}(V)$ denote the set of probability measures on the vertex set $V$.  Given two probability measures $\nu_0$ and $\nu_1$ on $V$, let $\Pi(\nu_0, \nu_1)$ denote the set of couplings (joint distributions) of $\nu_0$ and $\nu_1$. 
%Furthermore, let $P(\nu_0, \nu_1)$ denote the set of probability kernels $\nu$ so that ...
Given $\pi \in \Pi(\nu_0,\nu_1)$, consider the probability kernels 
%$p\in P(\nu_{0},\nu_{1})$ and $\bar{p}\in P(\nu_{1},\nu_{0})$ 
$p$ and $\bar{p}$ 
defined by $$\pi(x,y)=\nu_0(x)p(x,y) = \nu_{1}(y)\bar{p}(y,x), \ \ \forall x,y \in V, $$ and set
\begin{eqnarray}%\label{weakbis}
I_2(\pi):=  \sum_{x \in V} \left( \sum_{y \in V} d(x,y) p(x,y) \right)^2 \nu_0(x),
\end{eqnarray}
\begin{eqnarray*}
\bar{I}_2(\pi):=  \sum_{y \in V} \left( \sum_{x \in V} d(x,y) \bar{p}(y,x) \right)^2 \nu_1(y)\,.
\end{eqnarray*}
%and
%\begin{eqnarray*}
%J_2(\pi):=   \left(\sum_{x,y \in V} \sum_{x \in V} d(x,y)\pi(x,y) \right)^2.
%\end{eqnarray*}
%With this notation, 
%$$\Tw_{2}(\nu_{0}|\nu_{1})=\inf_{\pi \in \Pi(\nu_{0},\nu_{1})}I_{2}(\pi) .
%$$

We say a graph $G$, equipped with the distance $d$ and probability measure $\mu\in \mathcal{P}(V)$, satisfies the displacement convexity property (of entropy), if there exists a $C = C(G, d, \mu) > 0$, so that for any $\nu_0, \nu_1 \in \mathcal{P}(V)$,  there exists a $\pi\in \Pi(\nu_0, \nu_1)$ satisfying:
$$
H(\nu_t^\pi | \mu) \leq (1-t)H(\nu_0 | \mu) + tH(\nu_1 | \mu) - C t(1-t)(I_2(\pi)+\bar{I}_2(\pi))\,, \qquad \forall t\in [0,1].
$$

The quantity $I_2(\pi)$ goes back to Marton \cite{marton96,marton97}
% \textcolor{red}{to be cited properly}
in her definition of the following transport cost, we call \emph{weak transport cost}:
$$
\widetilde W_2^2(\nu_0,\nu_1) := \inf_{\pi \in \Pi(\nu_0,\nu_1)}  I_2(\pi) + \inf_{\pi \in \Pi(\nu_0,\nu_1)}  \bar{I}_2(\pi)\,.
$$
For more on this Wasserstein-type distance, see \cite{dembo97, marton99, S00}.  The precise statement of our tensorisation theorem is as follows. For  a graph, by the {\em graph distance} between two vertices, we mean the length of a shortest path between the two vertices.

\begin{thm} \label{th:mainintro}
For $i\in \{1,\ldots,n\}$, let $\mu^i$ be a probability measure on  $G_i=(V_i, E_i)$, with the graph distance $d_i$.  Assume also that for each $i\in \{1,\ldots,n\}$ there is a constant $C_i\geq 0$ such that for all probability measures $\nu_0,\nu_1$ on $V_i$, there exists $\pi = \pi^i \in \Pi(\nu_0,\nu_1)$ such that  it holds
$$
H(\nu_t^\pi | \mu^i) \leq (1-t)H(\nu_0 | \mu^{i}) + tH(\nu_1 | \mu^{i}) - C_it(1-t)(I_2(\pi)+\bar{I}_2(\pi)) \qquad \forall t\in [0,1].
$$
Then the product probability measure $\mu=\mu^1\otimes\cdots\otimes\mu^n$ defined on the Cartesian product $G=G_1\boxempty\cdots\boxempty G_n$ (see below for a precise definition) verifies the following property: 
for all probability measures $\nu_0,\nu_1$ on $V$, there exists $\pi = \pi^{(n)}\in \Pi(\nu_0,\nu_1)$ satisfying,
$$
H(\nu_t^\pi | \mu) \leq (1-t)H(\nu_0 | \mu) + tH(\nu_1 | \mu) - Ct(1-t)(I_2^{(n)}(\pi)+\bar{I}^{(n)}_2(\pi))
\qquad \forall t\in [0,1] ,
$$
where $C=\min_i C_i$,
$$
I_2^{(n)}(\pi) := \sum_{x \in V_1 \times \cdots \times V_{n}} \sum_{i=1}^n  \left( \sum_{y \in V_1 \times\cdots \times V_{n}} d_i(x_i,y_i) \frac{\pi(x,y)}{\nu_0(x)} \right)^2 \nu_0(x) ,
$$
and
$$
\bar I_2^{(n)}(\pi) := \sum_{y \in V_1 \times \cdots \times V_{n}} \sum_{i=1}^n  \left( \sum_{x \in V_1 \times\cdots \times V_{n}} d_i(x_i,y_i) \frac{\pi(x,y)}{\nu_1(y)} \right)^2 \nu_1(y) .
$$
(and with $I_2(\pi):=I_2^{(1)}(\pi)$ and similarly for $\bar I_2(\pi)$).
\end{thm}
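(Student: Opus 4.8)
The plan is to reduce the $n$-fold statement to the case $n=2$ by a straightforward induction, writing $G_1 \boxempty \cdots \boxempty G_n = (G_1 \boxempty \cdots \boxempty G_{n-1}) \boxempty G_n$ and applying the two-factor statement with the product measure $\mu^1 \otimes \cdots \otimes \mu^{n-1}$ on the first factor (which by the inductive hypothesis satisfies the displacement convexity property with constant $\min_{i<n} C_i$) and $\mu^n$ on the second. So the real content is the case of two factors $G_1 \boxempty G_2$ with measures $\mu^1, \mu^2$ and constants $C_1, C_2$.

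For the two-factor case, the key tool is the tensorisation property of the interpolating paths (Lemma~\ref{tensor} in the excerpt): given a coupling $\pi$ on $V_1 \times V_2$, one disintegrates it appropriately and builds, for $\nu_0$-almost every first coordinate, a coupling on the second factor, and similarly a coupling on the first factor, in such a way that the interpolating path $\nu_t^\pi$ factors through the one-dimensional interpolating paths. Concretely, I would fix $\nu_0,\nu_1 \in \mathcal P(V_1\times V_2)$, choose an arbitrary coupling $\pi \in \Pi(\nu_0,\nu_1)$, and use it to produce couplings in each coordinate: for each value $x_1$ of the first coordinate, apply the hypothesis for $G_2$ to the conditional measures to get a good coupling $\pi^{x_1}$ on $V_2$; then apply the hypothesis for $G_1$ to the first-coordinate marginals to get a good coupling on $V_1$. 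Splicing these together via Lemma~\ref{tensor} gives a coupling $\pi^{(2)}$ on $V_1\times V_2$ whose interpolating path satisfies, by two successive one-dimensional applications of the displacement convexity inequality and the fact that relative entropy for product reference measures decomposes as a sum (entropy of the first marginal plus the averaged conditional entropies),
$$
H(\nu_t^{\pi^{(2)}}|\mu) \le (1-t)H(\nu_0|\mu)+tH(\nu_1|\mu) - t(1-t)\big[C_1 A_1 + C_2 A_2\big],
$$
where $A_1$ collects the $I_2$-type contribution coming from the first coordinate and $A_2$ the averaged contribution coming from the second coordinate. The point is then that $A_1 + A_2$ is exactly the $i=1$ and $i=2$ terms appearing in $I_2^{(2)}(\pi^{(2)})$ (and similarly $\bar I_2^{(2)}$ on the dual side), and replacing $C_1,C_2$ by $C=\min(C_1,C_2)$ yields the claimed inequality.

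The main obstacle I anticipate is bookkeeping the disintegration carefully enough to see that the sum of the two one-dimensional $I_2$-quantities really equals $I_2^{(2)}(\pi^{(2)})$: the subtlety is that the inner sum $\sum_{y} d_i(x_i,y_i)\,\pi(x,y)/\nu_0(x)$ in $I_2^{(n)}$ mixes the full joint coupling, and one must check that the coordinatewise couplings constructed above reproduce exactly these marginal transport rates $\sum_{y_i} d_i(x_i,y_i)\,p_i(x_i,y_i)$ after the appropriate conditioning, with the conditional expectation over the other coordinate landing in the right place by Jensen/convexity (so that averaging the one-dimensional $I_2$'s over the conditioning variable dominates the corresponding term of $I_2^{(2)}$). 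Handling the $\bar I_2$ terms requires doing the same construction with the roles of $\nu_0$ and $\nu_1$ (and the kernels $p,\bar p$) interchanged, which is symmetric. Once the $n=2$ case is in place with the correct identification of the cost terms, the induction step is routine: one only needs that $I_2^{(n)}$ and $\bar I_2^{(n)}$ are additive over the block decomposition $V_1\times\cdots\times V_n = (V_1\times\cdots\times V_{n-1})\times V_n$, which is immediate from their definition as sums over $i$.
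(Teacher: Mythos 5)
Your proposal is correct and follows essentially the same route as the paper's proof of Theorem \ref{th:main}: coordinatewise couplings supplied by the one-dimensional hypothesis are assembled into a Knothe--Rosenblatt coupling, Lemma \ref{tensor} factors the interpolating path, the entropy chain rule handles the entropy terms, and Jensen's inequality (Proposition \ref{prop:tensor}) shows the averaged one-dimensional costs dominate $I_2^{(n)}$ and $\bar I_2^{(n)}$; the paper simply runs this for all $n$ factors at once rather than via your $n=2$ plus induction organization, which is an equivalent bookkeeping choice given the additivity of $I_2^{(n)}$ over blocks. The one detail your sketch elides is that the conditional law of $\nu_t^{\pi^{(2)}}$ given the conditioning coordinate is a $t$-dependent \emph{mixture} of one-dimensional interpolations rather than a single one, so before invoking the one-dimensional displacement convexity you must use convexity of $H(\cdot\,|\mu^1)$ and verify that the mixture weights, once averaged against the marginal of $\nu_t^{\pi^{(2)}}$, integrate out to the coupling $\hat\pi$ independently of $t$ --- exactly the computation with the coefficients $\alpha_t^k$ in the paper's proof.
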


In particular, as a consequence of the above tensorisation theorem, we shall prove that, given two probability measures $\nu_0, \nu_1$ on the hypercube $\Omega_n=\{0,1\}^n$, there exists a coupling $\pi$ such that
\begin{equation} \label{dcintro}
H(\nu_t^\pi|\mu) \leq (1-t) H(\nu_0|\mu) + t H(\nu_1|\mu) - \frac{1}{2}t(1-t) \widetilde W_2^2(\nu_0,\nu_1) \,,
\qquad \forall t\in[0,1] 
\end{equation}
where $\mu \equiv 1/2^n$ is the uniform measure (but that could be any product of Bernoulli measures).
As it is easy to see, the weak transport cost is weaker than $W_2$, but stronger than $W_1$. Moreover,
$\widetilde W_2^2(\nu_0,\nu_1) \geq \frac{2}{n} W_1^2(\nu_0,\nu_1)$ (see below) so that \eqref{dcintro} captures, in a sense, a discrete Ricci curvature of the hypercube (see \cite{ollivier-villani} and references therein).

As a by-product of the displacement convexity property above, we shall derive a series of consequences. More precisely, we shall first derive a so-called HWI inequality.

\begin{prop}\label{HWIpropintro}
Let  $\mu$ be a probability measure on  $V^n$. 
Assume that $\mu$ verifies the following displacement convexity inequality: there is some $c>0$ such that for any probability measures $\nu_0, \nu_1$ on $V^n$, there exists a coupling $\pi \in \Pi(\nu_0,\nu_1)$ such that
$$
H(\nu_t^\pi |\mu) \leq (1-t) H(\nu_0|\mu) + t H(\nu_1|\mu) -ct(1-t) (I_2^{(n)} (\pi)+\bar{I}_2^{(n)}(\pi)) \qquad \forall t \in [0,1].
$$
Then $\mu$ verifies 
\begin{align*}
H(\nu_0|\mu) 
&\leq 
H(\nu_1|\mu) + \sqrt{\sum_{x\in V^n} \sum_{i=1}^n \left[\sum_{z\in N_i(x)} \left( \log  \frac{\nu_0(x)}{\mu(x)} - \log \frac{\nu_0(z)}{\mu(z)}  \right)\right]_{+}^2\nu_0(x)}\sqrt{I_2^{(n)}(\pi)} %\nonumber \\
%& \quad 
- c(I_2^{(n)}(\pi) + \bar{I}_2^{(n)}(\pi)),
\end{align*}
for the same $\pi\in \Pi(\nu_0,\nu_1)$ as above, where $N_i(x)$ is the set of neighbors of $x$ in the $i$-th direction (see 
Proposition \ref{HWIprop} for a precise definition).
\end{prop}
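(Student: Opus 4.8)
The strategy is the classical one that derives an HWI inequality from displacement convexity: start from the convexity inequality, rearrange it to bound $H(\nu_0|\mu) - H(\nu_1|\mu)$ by a difference quotient of the entropy along the interpolating path near $t=0$, and then estimate that derivative by a "gradient of entropy times length of the path" term via Cauchy–Schwarz. Concretely, fixing the coupling $\pi \in \Pi(\nu_0,\nu_1)$ given by the hypothesis, I would rewrite the displacement convexity inequality as
\[
H(\nu_0|\mu) - H(\nu_1|\mu) \le \frac{H(\nu_0|\mu) - H(\nu_t^\pi|\mu)}{t} - c(1-t)\bigl(I_2^{(n)}(\pi) + \bar I_2^{(n)}(\pi)\bigr) \qquad \forall t \in (0,1],
\]
and then let $t \to 0^+$. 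The limit of the first term on the right is $-\frac{d}{dt}\big|_{t=0^+} H(\nu_t^\pi|\mu)$, and the second term tends to $-c(I_2^{(n)}(\pi) + \bar I_2^{(n)}(\pi))$, which already produces the last term in the claimed inequality. So the whole content is to show that
\[
-\frac{d}{dt}\Big|_{t=0^+} H(\nu_t^\pi|\mu) \;\le\; \sqrt{\sum_{x\in V^n} \sum_{i=1}^n \Bigl[\sum_{z\in N_i(x)} \Bigl( \log \tfrac{\nu_0(x)}{\mu(x)} - \log \tfrac{\nu_0(z)}{\mu(z)} \Bigr)\Bigr]_{+}^2\nu_0(x)} \;\sqrt{I_2^{(n)}(\pi)} .
\]

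To get this, I would first compute the right derivative of $t \mapsto H(\nu_t^\pi|\mu) = \sum_x \nu_t^\pi(x) \log(\nu_t^\pi(x)/\mu(x))$ at $t=0$ using the explicit binomial-type definition of the interpolating path $\nu_t^\pi$ (from the construction referenced before Lemma \ref{tensor}); this is where one uses that $\nu_0^\pi = \nu_0$ and that, to first order in $t$, mass flows from $x$ to its neighbours along the coupled paths, so $\frac{d}{dt}\big|_{0}\nu_t^\pi(x)$ is a signed combination of terms $d_i(x_i,y_i)\pi(x,y)$. Substituting into $\frac{d}{dt}H(\nu_t^\pi|\mu) = \sum_x (\log(\nu_t^\pi(x)/\mu(x)) + 1)\,\dot\nu_t^\pi(x)$ (the $+1$ contribution vanishes since $\sum_x \dot\nu_t^\pi(x)=0$), one obtains an expression of the form $\sum_x \sum_i \bigl(\text{difference of log-densities along direction } i\bigr)\bigl(\sum_{y} d_i(x_i,y_i)\pi(x,y)\bigr)$. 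One then bounds the log-density difference from above by its positive part summed over neighbours in direction $i$ (discarding the negative contributions, which only helps since the corresponding mass-flow coefficients are nonnegative), and applies Cauchy–Schwarz in $\ell^2\bigl(\nu_0(x)\,dx \otimes \{1,\dots,n\}\bigr)$: the first factor becomes the square-root term with the $[\cdot]_+^2$ sum, and the second factor becomes $\sqrt{\sum_x \sum_i (\sum_y d_i(x_i,y_i)\pi(x,y)/\nu_0(x))^2 \nu_0(x)} = \sqrt{I_2^{(n)}(\pi)}$, matching the definition in Theorem \ref{th:mainintro}.

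The main obstacle is the bookkeeping in differentiating the entropy along $\nu_t^\pi$ at $t=0$: one must identify precisely which neighbours receive mass at first order and with what rate, show that the $+1$ and higher-order terms are negligible, and justify that the one-sided derivative exists and that passing to the limit $t\to 0^+$ in the rearranged convexity inequality is legitimate (no issues with $\nu_0$ not charging all of $V^n$, since along the path densities stay comparable near $t=0$, or one restricts sums to $\mathrm{supp}\,\nu_0$). A secondary technical point is handling the direction-wise decomposition: the interpolating path on the Cartesian product is built coordinate-wise (Lemma \ref{tensor}), so the first-order mass flow naturally splits as a sum over the $n$ directions, which is exactly what makes the $\sum_i$ and the sets $N_i(x)$ appear; I would invoke the tensorisation structure rather than re-derive it. Everything else — the rearrangement, discarding negative parts, Cauchy–Schwarz — is routine.
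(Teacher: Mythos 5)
Your proposal follows essentially the same route as the paper: rearrange the displacement convexity inequality, let $t\to 0^+$ so that the difference quotient becomes $-\frac{\partial}{\partial t}H(\nu_t^\pi|\mu)_{|t=0}$ (whose explicit form is exactly Corollary \ref{parisbis}, so the ``main obstacle'' you identify is already done in the paper), then take positive parts direction by direction and apply Cauchy--Schwarz to produce $\sqrt{I_2^{(n)}(\pi)}$. The only step you defer to ``invoking tensorisation'' --- namely that for $z\in N_i(x)$ the weight $d(x,y)|\Gamma(x,z,y)|/|\Gamma(x,y)|$ equals $d(x_i,y_i)|\Gamma(x_i,z_i,y_i)|/|\Gamma(x_i,y_i)|\leq d(x_i,y_i)$ --- is carried out in the paper via the geodesic-counting identity \eqref{changement}, and is indeed the point where the coordinate distances $d_i$ of $I_2^{(n)}$ enter.
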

On the hypercube, the latter implies the following log-Sobolev-type inequality (that can be seen as a reinforcement of a discrete modified log-Sobolev inequality (see Corollary \ref{cortarte})): if $\mu\equiv 1/2^n$, for any $f \colon \Omega_n \to (0,\infty)$, it holds
\begin{eqnarray*}
\ent_{\mu}(f)\leq \frac{1}{2} \sum_{x\in \Omega_n} \sum_{i=1}^n  \left[ \log  f(x) - \log f(\sigma_i(x))  \right]_{+}^2f(x)\mu(x) -  \frac{1}{2} \widetilde W_2^2 (f\mu|\mu),
\end{eqnarray*}
where $\sigma_i(x)=(x_1,\dots,x_{i-1},1-x_i,x_{i+1},\dots,x_n)$ is the vector $x=(x_1,\dots,x_n)$ with the $i$-th coordinate flipped, and the constant $1/2$ (in front of the Dirichlet form) is optimal.

From this, by means of the Central Limit Theorem, the above reinforced modified log-Sobolev inequality actually leads to
the usual logarithmic Sobolev inequality of Gross \cite{gross} for the standard Gaussian, with the optimal constant (see Corollary \ref{cor:lsob}).

In a different direction, we also prove that the displacement convexity along the interpolating path $\{\nu_t^\pi\}_{t\in [0,1]}$ implies a discrete Prekopa-Leindler Inequality (Theorem \ref{th:dpl}), which in turn, as in the continuous setting, implies a logarithmic Sobolev inequality and a (weak) transport-entropy inequality of the Talagrand-type:
$$
\widetilde W_2^2 (\nu|\mu) \leq C\  H(\nu|\mu)\,,\qquad \forall \nu\,
$$
for a suitable constant $C>0$.
These implications and inequalities are  studied in further detail -- their various links with the concentration of measure phenomenon and with other functional inequalities -- in the companion paper \cite{GRST}.

\medskip

We may summarize  the various implications that we prove in the following diagram:

$$
\begin{array}{ccccc}
 & & \fbox{Displacement convexity} & & \\
 & \Swarrow & \Downarrow & \Searrow & \\
 \fbox{Prekopa-Leindler} & &\Downarrow & & \fbox{HWI} \\ 
  & \Searrow & \Downarrow & \Swarrow & \\
 & & \fbox{Modified log-Sob} \fbox{Weak transport} & & \\
 &  & \hspace{-2,3cm} \Downarrow &  & \\
  & & \hspace{-2,3cm} \fbox{log-Sob for the Gaussian}& & 
\end{array}
$$

\medskip

In summary, our paper develops  various theoretical objects of much current interest (the interpolating path $\{\nu_t^\pi\}_{t\in [0,1]}$, the weak transport cost 
$\widetilde W_2$, the displacement convexity property and its consequences) in a {\em discrete} context.  
%we mainly apply to two examples of finite graphs:
Our concrete examples include the complete graph and the hypercube. However, our theory applies to other graphs (not necessarily product type) that we will collect in a forthcoming paper. 
Also, we believe that our results open a wide class of new problems and new directions of investigation in Probability Theory, Convex Geometry and Analysis.

\medskip

Finally, we mention that, during the final preparation of this work, we learned that Erwan Hillion independently introduced the same kind of interpolating path, but between a Dirac at a fixed point $o \in G$ of the graph and any arbitrary measure (hence without coupling $\pi$), and  derive some displacement convexity property \cite{hillion} along the interpolation. In \cite{hillion}, the author also deals with the $f \cdot g$ decomposition introduced by  L\'eonard \cite{leonard}.

\medskip

Our presentation follows the following table of contents.

\tableofcontents

\subsection{Notation} \label{sec:notation}

Throughout  the paper we shall use the following notation. 

\subsection*{Graphs}
$G=(V,E)$ will denote a finite connected undirected graph  with the vertex set $V$ and the edge set $E$.
For any two vertices $x$  and $y$ of $G$, $x \sim y$ means that 
$x$ and $y$ are nearest neighbors (for the graph structure of $G$), \textit{i.e.}\ $(x,y) \in E$. 
We use $d$ for the graph distance defined below.
%\textcolor{red}{\club Cyril: graphs are finite for simplicity. For infinite graphs we have to be careful about the definition of $\Tw_2$: there are some integrability issue to be solved (one should work then with the set $\mathcal{P}_2(V)$ of probability measures with finite second moment: $\int d(x,y)^2d\mu(y) <\infty$ for some $x$, but this introduce some more technicalities, in particular at the level of Prekopa). Since our examples are all finite, I suggest to restrict this paper to finite graphs, and to deal with infinite graph in the paper about Talagrand's transport-entropy inequalities.}

Given two graphs $G_1=(V_1,E_1)$, $G_2=(V_2,E_2)$, with graph distances $d_1$, $d_2$ respectively, we set 
$G_1 \boxempty G_2 = (V_1 \times V_2, E_1 \boxempty E_2)$ for the Cartesian product of the two graphs, 
equipped with the $\ell^1$ distance $d(x,y)=d_1(x_1,y_1)+d_2(x_2,y_2)$, for all
$x=(x_1,x_2), y=(y_1,y_2) \in G_1 \times G_2$. More precisely, $((x_1,x_2),(y_1,y_2)) \in E_1 \boxempty E_2$
if either $x_1=y_1$ and $x_2 \sim y_2$, or $x_1 \sim y_1$ and $x_2=y_2$.
The Cartesian product of $G$ with itself will  simply be denoted by $G^2$,
and more generally by $G^n$, for all $n\geq 2.$

%In many places, and when there is no confusion, with a slight abuse of notation, we will identify $G$ and $V$.

\subsection*{Paths and geodesics}
A \emph{path} $\gamma=(x_0,x_1,\dots,x_n)$ (of $G$) is an oriented sequence  
of vertices of $G$ satisfying $x_{i-1} \sim x_i$ for any $i=1\dots,n$. Such a path starts at $x_0$ and ends at $x_n$ and is said to be of length $|\gamma|=n$. 
The graph distance $d(x,y)$ between two vertices $x,y \in G$ is the minimal length of a path connecting $x$ to $y$. Any path of length $n=d(x,y)$ between $x$ and $y$ is called a \emph{geodesic} between $x$ and $y$. By construction, any geodesic is self-avoiding. We will denote by $\Gamma(x,y)$ the set of all geodesics from $x$ to $y$.

We will say that a path $\gamma=(x_0,x_1,\ldots,x_n)$ crosses the vertex $z\in V$, if there is some $k$ such that $z=x_k$. In this case, we will write $z\in \gamma.$ Given $z \in V$, we set $C(z)=\{(x,y) \mbox{ such that } z \in \gamma \mbox{ for some } \gamma \in \Gamma(x,y)\}$
for the set of couples such that some geodesic joining them goes through $z$.
Conversely, if $z$ belongs to some geodesic between $x$ and $y$, we shall write $z\in \llbracket x,y\rrbracket$ and say that $z$ is \emph{between} $x$ and $y$. Finally, for all $x,y,z\in V$, we will denote by $\Gamma(x,z,y)$, the set of geodesics $\gamma\in \Gamma(x,y)$ such that $z\in \gamma$. This set is nonempty if and only if $z\in \llbracket x,y\rrbracket$.

\subsection*{Probability measures and couplings}

We write ${\mathcal P}(V)$ for the set of probability measures on $V$. Given a probability measure $\nu \in {\mathcal P}(V)$ and a function $f \colon V \to \R $,  $\nu(f)=\sum_{z \in V} \nu(z)f(z)$ denotes the mean value of $f$ with respect to $\nu$. We may also use the alternative notation $\nu(f)=\int f(x) \,\nu(dx) = \int f(x)\,d\nu(x)=\int f\,d\nu$. 

Let $\nu,\mu\in \mathcal{P}(V)$; the \emph{relative entropy} of $\nu$ with respect to $\mu$ is defined by
$$
H(\nu|\mu)= 
\begin{cases}
\int \frac{d\nu}{d\mu} \log \frac{d\nu}{d\mu} \,d\mu& \mbox{if } \nu \ll \mu \\
+\infty & \mbox{otherwise} 
\end{cases}
$$
where $\nu \ll \mu$ means that $\nu$ is absolutely continuous with respect to $\mu$, and $\frac{d\nu}{d\mu}$
denotes the density of $\nu$ with respect to $\mu$.

Given a density $f \colon V \to (0,\infty)$ with respect to a given probability measure $\mu$ (\textit{i.e.}\ $\mu(f)=1$),
we shall use the following notation for the relative entropy of $f\mu$ with respect to $\mu$:
$$
\ent_\mu(f) := H(f\mu |\mu) = \int f \log f d\mu .
$$
If $f\colon V \to (0,\infty)$  is no longer a density, then  $\ent_\mu(f) := \int f \log (f/\mu(f))\,d\mu$.

Given two graphs $G_1=(V_1,E_1)$ and $G_2=(V_2,E_2)$ and a probability measure $\mu \in \mathcal{P}(V_1 \times V_2)$ on the product,
we \emph{disintegrate} $\mu$ as follows: let $\mu^2$ be the second marginal of $\mu$, \textit{i.e.}\ $\mu^2(x_2)=\sum_{x_1 \in V_1} \mu(x_1,x_2)=\mu(V_1,x_2)$, for all $x_2 \in V_2$, and set $\mu^1(x_1|x_2)$ so that
\begin{equation} \label{disintegration}
\mu(x_1,x_2)=\mu^2(x_2) \mu^1(x_1 |x_2), \qquad \forall (x_1,x_2) \in V_1 \times V_2,
\end{equation}
with the convention that $\mu^1(x_1|x_2)=0$ if $\mu^2(x_2)=0$. Equation \eqref{disintegration} will be referred to as the \emph{disintegration formula} of $\mu$.

Recall that a coupling $\pi$ of two probability measures $\mu$ and $\nu$ in $\mathcal{P}(V)$ is a probability measure on $V^2$ so that $\mu$ and $\nu$ are its first and second marginals, respectively: \textit{i.e.}\ $\pi(x,V)=\mu(x)$ and $\pi(V,y)=\nu(y)$, for all $x, y \in V$.
Given $\mu, \nu \in \mathcal{P}(V)$, the set of all couplings of $\mu$ and $\nu$ will be denoted by $\Pi(\mu,\nu)$.

Moreover, given two probability measures $\mu$ and $\nu$ in $\mathcal{P}(V)$, we denote by $P(\mu,\nu)$ the set of probability kernels\footnote{We recall that $p:V \times V \to [0,1]$ is a probability kernel if, for all 
$x \in V$, $\sum_{y \in V} p(x,y)=1$.}  $p$ such that
$$
\sum_{x \in V} \mu(x)p(x,y) = \nu(y)\,, \qquad \forall y \in V .
$$
By construction, given $p \in P(\mu,\nu)$, one defines a coupling $\pi \in \Pi(\mu,\nu)$ by setting $\pi(x,y)=\mu(x) p(x,y)$, $x,y \in V$.
Conversely, given a coupling $\pi \in \Pi(\mu,\nu)$, we canonically construct a kernel $p \in P(\mu,\nu)$
by setting $p(x,y)=\pi(x,y)/\mu(x)$ when $\mu(x) \neq 0$ and $p(x,y)=0$ otherwise.

{\it Warning 1:} In the sequel, it will always be understood, although not explicitly stated, that $p(x,y)=0$ if $\mu(x)=0$ and similarly in the disintegration formula \eqref{disintegration}.

{\it Warning 2:} For convenience, we will use the French notation $C_n^k := \genfrac{(}{)}{0pt}{}{n}{k} = \frac{n!}{k!(n-k)!}$ for the binomial coefficients.

%%%%%%%%%%%%%%%%%%%%%%%%%%%%%%%%%%%%%%%%%%%%%%%%%%%%%%%%%%%%%%%%%%%%%%%%%%%%%%%%%%%%%%%%

\section{A notion of a path on the set of probability measures on graphs.}

The aim of this section is to define a class of paths between probability measures on graphs. As proved below, each path in this class is a geodesic,
in the space of probability measures equipped with the Wasserstein distance $W_1$ (see below). It satisfies a convenient differentiation property and also has the nice feature of allowing tensorisation. 
We shall end the section with some specific examples.

%%%%%%%%%%%%%%%%%%%%%%%%%%%%%%%%%%%%%%%%%%%%%%%%%%%%%%%%%%%%%%%%%%%%%%%%%%%%%%%%%%%%%%%%

\subsection{Construction}\label{construction}

Inspired by \cite{johnson}, we will first construct an interpolating path between two Dirac measures 
$\delta_x$ and $\delta_y$,  for arbitrary  $x,y \in V$,  on the set of probability measures ${\mathcal P}(V)$. 
Fix $x,y \in V$ and denote by $\Gamma$ the random variable that chooses uniformly at random a geodesic $\gamma$ in $\Gamma(x,y)$. Also, for any $t \in [0,1]$, let $N_t \sim \mathcal{B}(d(x,y),t)$ be a binomial variable of parameter $d(x,y)$ and $t$, independent of $\Gamma$ (observe that $N_0=0$ and $N_1=d(x,y)$). Then denote by $X_t=\Gamma_{N_t}$ the random position on $\Gamma$ after $N_t$ jumps starting from $x$. Finally, set $\nu_t^{x,y}$ for the law of $X_t$.

By construction, $\nu_t^{x,y}$ is clearly a path from $\delta_x$ to $\delta_y$.
Moreover, for all $z\in V$, we have
\begin{align*}
\nu_t^{x,y}(z)&=\sum_{\gamma\in \Gamma(x,y)} \P(X_t=z|\Gamma=\gamma,z\in \Gamma) \P(\Gamma=\gamma, z\in \gamma)
= \sum_{\gamma\in \Gamma(x,y)} C_{d(x,y)}^{d(x,z)} t^{d(x,z)} (1-t)^{d(y,z)} \frac{ \1_{z\in \gamma}} {|\Gamma(x,y)|} .
\end{align*}
Therefore 
$$
\nu_t^{x,y}(z)= C_{d(x,y)}^{d(x,z)} t^{d(x,z)} (1-t)^{d(y,z)}\; \frac{|\Gamma(x,z,y)|}{|\Gamma(x,y)|}.
$$
For all $z$ between $x$ and $y$ we observe that 
\begin{equation} \label{sophiacardinale}
|\Gamma(x,z,y)|=|\Gamma(x,z)| \times |\Gamma(z,y)|,
\end{equation}
since there is a one to one correspondence between the sets
of geodesics from $x$ to $z$ and from $z$ to $y$, and the set of geodesics from $x$ to $y$ that cross the vertex $z$, just by gluing the path from $x$ to $z$ to the path from $z$ to $y$, and by using that $d(x,y)=d(x,z)+d(z,y)$.
Therefore $\nu_t^{x,y}$ takes the form
\begin{equation} \label{atlanta}
\nu_t^{x,y}(z)= C_{d(x,y)}^{d(x,z)} t^{d(x,z)} (1-t)^{d(y,z)}\; \frac{|\Gamma(x,z)| \times |\Gamma(z,y)|}{|\Gamma(x,y)|} \1_{z\in \llbracket x,y\rrbracket} .
\end{equation}
Observe that, for any $x,y \in V$ and any $t \in (0,1)$, $\nu_t^{x,y} = \nu_{1-t}^{y,x}$.

\smallskip

%%PT - added the following remark
%%
\begin{rem}
In the construction above of the interpolation $\nu^{x,y}_t$, the choice of the binomial random variable for the number $N_t$ of jumps might seem somewhat ad hoc; however, in Proposition~\ref{loident} below, we show that in fact the choice is {\em necessary} for  $\nu^{x,y}_t$ to tensorise over a (Cartesian) product of graphs.
\end{rem}
Given the family $\{\nu_t^{x,y}\}_{x,y}$, we can now construct a path from any measure $\nu_0 \in \mathcal{P}(V)$ to any measure 
$\nu_1\in \mathcal{P}(V)$. Namely, given
a coupling $\pi \in \mathcal{P}(V \times V)$ of $\nu_0$ and $\nu_1$, we define
\begin{equation} \label{atlantabis}
\nu_t^\pi(\,\cdot\,)= \sum_{(x,y) \in V^2} \pi(x,y)\nu_t^{x,y}(\,\cdot\,), \qquad \forall t \in [0,1].
\end{equation}
%where we recall that $C(z)=\{(x,y) \mbox{ such that } z \in \gamma \mbox{ for some } \gamma \in \Gamma(x,y)\}$.

By construction we have $\nu_0^\pi=\nu_0$ and $\nu_1^\pi=\nu_1$.
Furthermore, observe that, if $\nu_0=\delta_x$ and $\nu_1=\delta_y$, then necessarily $\pi = \delta_x \otimes \delta_y$ and
thus $\nu_t^\pi=\nu_t^{x,y}$.

%%%%%%%%%%%%%%%%%%%%%%%%%%%%%%%%%%%%%%%%%%%%%%%%%%%%%%%%%%%%%%%%%%%%%%%%%%%%%%%%%%%%%%%%

\subsection{Geodesics for $W_1$}

Next we prove that, when $\pi$ is well chosen, $(\nu_t^\pi)_{t \in [0,1]}$ is a geodesic from $\nu_0$ to $\nu_1$ on the set of probability measures 
$\mathcal{P}(V)$ equipped with the Wasserstein $L_{1}$-distance $W_{1}$.

Given two probability measures $\mu$ and $\nu$ on $\mathcal{P}(V)$, recall that
$$
W_1(\mu,\nu)= \inf_{\pi \in \Pi(\nu_0,\nu_{1})} \iint d(x,y)\,\pi(dx \ dy)=\inf_{X\sim \mu, Y\sim \nu} \E [d(X,Y)]
$$

The following result asserts that $(\nu_t^\pi)_{t \in [0,1]}$ is actually a geodesic for $W_1$ when $\pi$ is an optimal coupling.

\begin{prop} \label{prop:georgia}
For any probability measures $\nu_0,\nu_1 \in \mathcal{P}(V)$, it holds
$$
W_1(\nu_s^{\pi^*},\nu_t^{\pi^*}) = |t-s| W_1(\nu_0,\nu_1) \qquad \forall s,t \in [0,1]
$$
where $\pi^*$ is an optimal coupling 
in the definition of $W_1(\nu_0,\nu_1)$ and where $\nu_t^{\pi^*}$ is defined in \eqref{atlantabis}.
\end{prop}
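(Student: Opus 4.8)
The plan is to show the two inequalities $W_1(\nu_s^{\pi^*},\nu_t^{\pi^*}) \le |t-s| W_1(\nu_0,\nu_1)$ and $W_1(\nu_s^{\pi^*},\nu_t^{\pi^*}) \ge |t-s| W_1(\nu_0,\nu_1)$ separately, and then observe that both together force equality. Actually, a cleaner route is the standard metric-geometry trick: if a curve $(\mu_t)_{t\in[0,1]}$ joining $\mu_0$ to $\mu_1$ in a metric space satisfies $W_1(\mu_s,\mu_t) \le |t-s| W_1(\mu_0,\mu_1)$ for all $s,t$, then by the triangle inequality applied along a partition $0 = t_0 < t_1 < \cdots < t_N = 1$ one gets $W_1(\mu_0,\mu_1) \le \sum_j W_1(\mu_{t_{j-1}},\mu_{t_j}) \le \sum_j (t_j - t_{j-1}) W_1(\mu_0,\mu_1) = W_1(\mu_0,\mu_1)$, so equality must hold at every step, i.e. the upper bound is automatically an equality. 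Hence it suffices to prove the single inequality $W_1(\nu_s^{\pi^*},\nu_t^{\pi^*}) \le |t-s| W_1(\nu_0,\nu_1)$; I will take $s < t$ without loss of generality.

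To prove this upper bound, the key is to exhibit an explicit coupling of $\nu_s^{\pi^*}$ and $\nu_t^{\pi^*}$ whose average cost is at most $(t-s) W_1(\nu_0,\nu_1)$. First I would build a coupling of $\nu_s^{x,y}$ and $\nu_t^{x,y}$ for fixed endpoints $x,y$: recalling the probabilistic construction, fix a uniformly random geodesic $\Gamma \in \Gamma(x,y)$, and then realize $N_s$ and $N_t$ jointly on the same probability space with $N_s \le N_t$ — concretely, write $N_t = \sum_{k=1}^{d(x,y)} B_k$ as a sum of i.i.d. Bernoulli$(t)$ variables and set $N_s = \sum_{k=1}^{d(x,y)} B_k \1_{U_k \le s/t}$ for independent uniforms $U_k$, so that $N_s \sim \mathcal{B}(d(x,y),s)$, $N_t \sim \mathcal{B}(d(x,y),t)$, and $N_t - N_s \ge 0$ with $\E[N_t - N_s] = (t-s) d(x,y)$. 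Then $(\Gamma_{N_s}, \Gamma_{N_t})$ is a coupling of $\nu_s^{x,y}$ and $\nu_t^{x,y}$, and since $\Gamma_{N_s}$ and $\Gamma_{N_t}$ lie on a common geodesic with $N_s \le N_t$, we have $d(\Gamma_{N_s}, \Gamma_{N_t}) = N_t - N_s$; taking expectations gives a coupling with cost exactly $(t-s) d(x,y)$. Next, glue these together using $\pi^*$: the measure $\sum_{(x,y)} \pi^*(x,y) \,\mathrm{Law}(\Gamma_{N_s}, \Gamma_{N_t})$ is a coupling of $\nu_s^{\pi^*}$ and $\nu_t^{\pi^*}$ by \eqref{atlantabis}, and its cost is $\sum_{(x,y)} \pi^*(x,y) (t-s) d(x,y) = (t-s) \iint d\,d\pi^* = (t-s) W_1(\nu_0,\nu_1)$, using that $\pi^*$ is optimal. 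This establishes the upper bound and hence, by the first paragraph, the claimed identity.

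The main obstacle — really the only nontrivial point — is making sure the joint law of $(\Gamma_{N_s}, \Gamma_{N_t})$ genuinely has the right two marginals $\nu_s^{x,y}$ and $\nu_t^{x,y}$, and that $d(\Gamma_{N_s},\Gamma_{N_t}) = N_t - N_s$ really holds pathwise. The marginal check is immediate from the definition of $\nu_t^{x,y}$ as the law of $\Gamma_{N_t}$, provided the monotone coupling $(N_s, N_t)$ indeed has the two binomial marginals, which the construction above guarantees. The distance identity uses that a geodesic is self-avoiding (noted in the Paths and geodesics subsection), so the positions after $N_s$ and after $N_t$ steps along the same geodesic are at graph distance exactly $N_t - N_s$. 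One should also dispatch the degenerate cases ($s = t$, or $x = y$ so $d(x,y) = 0$) trivially. No deeper estimate is needed; the whole argument is a coupling construction plus the telescoping triangle-inequality observation.
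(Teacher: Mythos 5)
Your proposal is correct and follows essentially the same route as the paper: reduce to the one-sided bound $W_1(\nu_s^{\pi^*},\nu_t^{\pi^*})\leq (t-s)W_1(\nu_0,\nu_1)$ via the triangle-inequality/telescoping argument, then realize it by a monotone coupling of the two binomials $N_s\leq N_t$ along a common random geodesic, glued over $\pi^*$. The paper phrases the binomial coupling through i.i.d.\ couplings of Bernoulli$(s)$ and Bernoulli$(t)$ pairs rather than your thinning construction, but this is the same idea.
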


\begin{proof}
Fix two probability measures $\nu_0$, $\nu_1 \in \mathcal{P}(V)$ and $\pi^*$ an optimal coupling 
in the definition of $W_1(\nu_0,\nu_1)$ (since $\mathcal{P}(V)$ is compact $\pi^*$ is well defined).
For brevity, set $\nu_t:= \nu_t^{\pi^*}$. 

First, we claim that it is enough to prove that 
\begin{equation} \label{parigi}
W_1(\nu_s,\nu_t) \leq (t-s) W_1(\nu_0,\nu_1), \qquad \forall s,t \in [0,1] \mbox{ with } s \leq t .
\end{equation}
Indeed, assume \eqref{parigi}, then recalling that $W_1$ is a distance (see e.g. \cite{villani}), by the triangle inequality we have
\begin{align*}
W_1(\nu_0,\nu_1) 
& \leq 
W_1(\nu_0,\nu_s) + W_1(\nu_s,\nu_t) + W_1(\nu_t,\nu_1) 
 \leq 
s W_1(\nu_0,\nu_1) + (t-s) W_1(\nu_0,\nu_1) + t W_1(\nu_0,\nu_1) \\
& \leq 
W_1(\nu_0,\nu_1) .
\end{align*}
Hence, all the inequalities used above are actually equalities, which guarantees the conclusion of the proposition and hence the claim. 

Now, we prove \eqref{parigi}.
%{\it First proof:} \textcolor{green}{The advantage of this proof is to construct an optimal coupling for $W_1(\nu_s,\nu_t)$.} 
Let $(X,Y)$ be a random couple of law $\pi^*$. Fix $s \leq t$, it suffises to construct a random couple $(X_s, X_t)$ with marginal laws $\nu_s$ and $\nu_t$ so that 
$$\E[d(X_s,X_t)]\leq (t-s)\E[d(X,Y)]= (t-s) W_1(\nu_0,\nu_1).$$
From the last observation, let us remark that such a  couple $(X_s, X_t)$ will therefore realized $$\E[d(X_s,X_t)]=W_1(\nu_s,\nu_t).$$

Let $\Bigl((U_s^i,V_t^i)\Bigr)_{i\geq 1}$ be   an independent identically distributed sequence of random couples in $\{0,1\}^2$,  independent of $X$ and $Y$. We chose the law of $(U_s^1,V_t^1)$  given by
$$\P((U_s^1,V_t^1)=(0,0))=1-s,\quad \P((U_s^1,V_t^1)=(0,1))=0,$$
$$\quad \P((U_s^1,V_t^1)=(1,0))=t-s,\quad \P((U_s^1,V_t^1)=(1,1))=t,$$
so that $ U_s^1$ and $V_t^1$ are  Bernoulli random variables with respective parameters $s$ and $t$,
and we have 
$$\E(|U_s^1-V_t^1|)=(t-s) .$$
Given $(X,Y)=(x,y)$, with $x,y\in V$, let $(N_s ,N_t)$ denote the random couple defined by 
$$N_s= \sum_{i=1}^{d(x,y)} U_s^i, \quad N_t= \sum_{i=1}^{d(x,y)} V_s^i.$$
Then the laws of  $N_s$ and $N_t$ given $(X,Y)=(x,y)$ are respectively  $\mathcal{B}(d(x,y),s)$ and $\mathcal{B}(d(x,y),t)$, the binomial distribution with parameters $d(x,y)$, $s$ and $t$ respectively.

Finally,  given $(X,Y)=(x,y)$, with  $x,y\in V$, let $\Gamma$ denote a random geodesic chosen uniformly in $\Gamma(x,y)$, independently of the sequence $\left((U_s^i,V_t^i)\right)_{i\geq 1}$,
and  let $X_s= \Gamma_{N_s}$ be the random position on $\Gamma$ after $N_s$ jumps and  $X_t= \Gamma_{N_t}$ be the random position on $\Gamma$ after $N_t$ jumps. By definition, the law of $X_s$ and $X_t$ are respectively $\nu_s$ and $\nu_t$ and one has  $d(X_s,X_t)= |N_s-N_t|$.
Moreover, according to this construction, one has 
\begin{align*}
\E[ d(X_s,X_t)]&=\E\left[|N_s-N_t| \right]
= \E\left[\left|\sum_{i=1}^{d(X,Y)} U_s^i-\sum_{i=1}^{d(X,Y)} V_t^i\right|\right]\\
&\leq \E\left[\sum_{i=1}^{d(X,Y)} \left|U_s^i-V_t^i\right|\right]
%&=\E\left[\E\left[\left. \sum_{i=1}^{d(x,y)} \E\left[\left|U_s^i-V_t^i\right|\right] \right|X=x,Y=y \right]\right]\\
= \E\left[\sum_{i=1}^{d(X,Y)} \E\left[\left|U_s^i-V_t^i\right|\right]\right]
= (t-s) \E[d(X,Y)].
\end{align*}
This completes the proof of \eqref{parigi} and Proposition \ref{prop:georgia}.
\end{proof}

%%%%%%%%%%%%%%%%%%%%%%%%%%%%%%%%%%%%%%%%%%%%%%%%%%%%%%%%%%%%%%%%%%%%%%%%%%%%%%%%

\subsection{Differentiation property} \label{sec:differentiation}

A second property of the path defined in \eqref{atlanta} and \eqref{atlantabis} is the following time differentiation  property. 

For any $z$ on a given geodesic $\gamma$ from
 $x$ to $y$, if $z\neq y$, let $\gamma_+(z)$ denotes the (unique) vertex on $\gamma$ at distance $d(z,y)-1$ from $y$ (and thus at distance $d(x,z)+1$ from $x$), and similarly if $z\neq x$,
  let $\gamma_-(z)$ denote the vertex on $\gamma$ at distance $d(z,y)+1$ from $y$ (and hence at distance $d(x,z)-1$ from $x$).
  In other words, following the geodesic $\gamma$ from $x$ toward $y$, $\gamma_-(z)$ is the vertex just anterior to $z$, and $\gamma_+(z)$ the vertex posterior to $z$.
  
 For any real function $f$ on $V$,  we also define  two related notions of gradient along $\gamma$: for all $z\in \gamma$, $z\neq y$,
 $$
 \nabla_\gamma^+f(z)= f(\gamma_+(z))-f(z),
 $$ 
 and for all $z\in \gamma$, $z\neq x$,
 $$
 \nabla_\gamma^-f(z)= f(z)-f(\gamma_-(z)).
 $$
 By convention, we put $\nabla^-_\gamma f(x)=\nabla_\gamma^+f(y)=0$, and $\nabla_\gamma^+f(z)=\nabla^-_\gamma f(z)=0,$ if $z\notin \gamma.$
 Let $\nabla_\gamma f$ denote the following convex combination of these two gradients:
 $$
 \nabla_\gamma f(z)= \frac{d(y,z)}{d(x,y)} \nabla_\gamma^+f(z) + \frac{d(x,z)}{d(x,y)} \nabla_\gamma^-f(z) .
% \frac{d(y,z)}{d(x,y)} \nabla_\gamma^+f(z)+\frac{d(x,z)}{d(x,y)} \nabla_\gamma^-f(z).
 $$
 Observe that, although not explicitly stated, $\nabla_\gamma $ depends on $x$ and $y$.
 Finally, for all $z\in \llbracket x,y\rrbracket $, we define
 $$
 \nabla_{x,y}f (z) = \frac{1}{|\Gamma(x,z,y)| }\sum_{\gamma\in\Gamma(x,z,y)}  \nabla_\gamma f(z),
 $$
 and when $z\notin \llbracket x,y\rrbracket$, we set $\nabla_{x,y}f(z)=0.$
 
\begin{prop}  \label{paris}
For all  function $f \colon V \to \R $ and all $x,y \in V$, it holds
$$
\frac{\partial}{\partial t} \nu_t^{x,y}(f) =d(x,y) \nu_t^{x,y} ( \nabla_{x,y}f ) .%= \sum_{z\in G} \nabla_{x,y}f(z) \nu_t^{x,y}(z) .
$$
\end{prop}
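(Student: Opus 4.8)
The plan is to differentiate the explicit formula \eqref{atlanta} for $\nu_t^{x,y}(z)$ coordinate by coordinate in $z$ and then reorganize the resulting sum as a ``telescoping'' expression along geodesics. Writing $D=d(x,y)$ and $k=d(x,z)$ for $z\in\llbracket x,y\rrbracket$, we have
$$
\nu_t^{x,y}(z)= C_{D}^{k}\, t^{k}(1-t)^{D-k}\,\frac{|\Gamma(x,z)|\,|\Gamma(z,y)|}{|\Gamma(x,y)|},
$$
so that $\frac{\partial}{\partial t}\nu_t^{x,y}(z)= C_D^k\bigl(k\,t^{k-1}(1-t)^{D-k}-(D-k)t^k(1-t)^{D-k-1}\bigr)\frac{|\Gamma(x,z)|\,|\Gamma(z,y)|}{|\Gamma(x,y)|}$. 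Hence
$$
\frac{\partial}{\partial t}\nu_t^{x,y}(f)=\sum_{z\in\llbracket x,y\rrbracket} f(z)\,C_D^k\bigl(k\,t^{k-1}(1-t)^{D-k}-(D-k)t^k(1-t)^{D-k-1}\bigr)\frac{|\Gamma(x,z)|\,|\Gamma(z,y)|}{|\Gamma(x,y)|}.
$$

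The key combinatorial step is the identity $k\,C_D^k = D\,C_{D-1}^{k-1}$ and $(D-k)C_D^k = D\,C_{D-1}^{k}$, which lets me pull out the factor $D=d(x,y)$ and rewrite the $t$-derivative in terms of the binomial law $\mathcal B(D-1,t)$. Concretely, I would show that for a fixed geodesic $\gamma$ from $x$ to $y$ the contribution telescopes: pairing the ``$+$'' term at level $k$ with the ``$-$'' term at level $k+1$ along the edge $\{z,\gamma_+(z)\}$ produces exactly $D\cdot C_{D-1}^{k}t^k(1-t)^{D-1-k}\bigl(f(\gamma_+(z))-f(z)\bigr)$, i.e.\ $D$ times $\nabla_\gamma^+ f(z)$ weighted by a $\mathcal B(D-1,t)$ probability. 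The main technical point is to go from this ``per-geodesic, per-edge'' bookkeeping back to the vertex-indexed gradient $\nabla_{x,y}f$: one has to average over all geodesics in $\Gamma(x,y)$ and, using \eqref{sophiacardinale} and the one-to-one gluing correspondence, check that re-indexing the edge sum by the midpoint vertex $z$ (which may sit at position $k$ on some geodesics and position $k+1$ on others) reproduces precisely the convex combination $\frac{d(y,z)}{d(x,y)}\nabla_\gamma^+ f(z)+\frac{d(x,z)}{d(x,y)}\nabla_\gamma^- f(z)$ that defines $\nabla_\gamma f(z)$, and then $\nabla_{x,y}f(z)$ after averaging over $\Gamma(x,z,y)$.

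Concretely I would carry out the steps in this order: (i) differentiate \eqref{atlanta} and collect terms; (ii) apply the binomial identities to factor out $d(x,y)$ and express everything via $\mathcal B(d(x,y)-1,t)$; (iii) for each geodesic $\gamma$, regroup the sum over vertices of $\gamma$ so that each term appears once as a ``forward'' increment $\nabla_\gamma^+ f$ and once as a ``backward'' increment $\nabla_\gamma^- f$, matching the weights $\tfrac{d(y,z)}{d(x,y)}$ and $\tfrac{d(x,z)}{d(x,y)}$ (this is where $C_{D-1}^{k}$ splits as a weighted average of the binomial weights at $k$ and $k-1$); (iv) sum over $\gamma\in\Gamma(x,y)$, use \eqref{sophiacardinale} to rewrite $|\Gamma(x,z)|\,|\Gamma(z,y)|/|\Gamma(x,y)|$ as the probability that a uniformly random geodesic passes through $z$ times $|\Gamma(x,z,y)|/|\Gamma(x,y)|$, and recognize the definition of $\nabla_{x,y}f(z)$. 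I expect step (iii)--(iv), the passage from the edge-indexed telescoping sum to the vertex-indexed $\nabla_{x,y}$, to be the main obstacle, since it requires carefully tracking which geodesics contribute a forward versus a backward increment at each vertex and confirming that the binomial weights recombine exactly into the stated convex combination; the rest is routine algebra with binomial coefficients.
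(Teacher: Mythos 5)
Your plan is correct and would go through, but it is organized differently from the paper's proof, and the step you single out as the main obstacle --- passing from the edge-indexed telescoping sum back to the vertex-indexed gradient $\nabla_{x,y}f$ --- is precisely what the paper's organization avoids. The paper averages over geodesics \emph{first}: setting $h(k)=\E[f(\Gamma_k)]$ for $\Gamma$ uniform on $\Gamma(x,y)$, it writes $\nu_t^{x,y}(f)=\sum_{k}h(k)C_n^kt^k(1-t)^{n-k}$ with $n=d(x,y)$ and differentiates this one-dimensional binomial expectation via Lemma \ref{deriv-binom}; the proof of that lemma uses the same family of binomial identities and the $1=t+(1-t)$ splitting as your steps (ii)--(iii), but the index shifts are arranged so that the derivative comes out already in vertex-indexed form, $\sum_k\bigl[(h(k+1)-h(k))(n-k)+(h(k)-h(k-1))k\bigr]C_n^kt^k(1-t)^{n-k}$, i.e.\ $n$ times the convex combination $\frac{n-k}{n}\nabla^+_\gamma+\frac{k}{n}\nabla^-_\gamma$ against the original $\mathcal{B}(n,t)$ weights, with no per-edge bookkeeping to undo. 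The remaining identification of $\E[d(x,y)\nabla_\Gamma f(X_t)]$ with $d(x,y)\,\nu_t^{x,y}(\nabla_{x,y}f)$ is done by checking that the conditional law of $\Gamma$ given $X_t=z$ is uniform on $\Gamma(x,z,y)$ --- the probabilistic counterpart of your step (iv). Your route (differentiate \eqref{atlanta} pointwise, use $kC_D^k=DC_{D-1}^{k-1}$ and $(D-k)C_D^k=DC_{D-1}^{k}$ to factor out $D$ and land on $\mathcal{B}(D-1,t)$, telescope per geodesic, then split each $\mathcal{B}(D-1,t)$ weight as $\frac{j+1}{D}$ times the $\mathcal{B}(D,t)$ weight at $j+1$ plus $\frac{D-j}{D}$ times the one at $j$ to redistribute each edge increment onto its two endpoints) does close and reproduces exactly the stated convex combination; it just pays in bookkeeping what the paper's reduction to the function $h$ on $\{0,\dots,n\}$ gets for free. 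One small slip to fix in your step (iv): by \eqref{sophiacardinale}, $|\Gamma(x,z)|\,|\Gamma(z,y)|/|\Gamma(x,y)|$ \emph{is} $|\Gamma(x,z,y)|/|\Gamma(x,y)|$, the probability that a uniform geodesic passes through $z$ --- not that probability times $|\Gamma(x,z,y)|/|\Gamma(x,y)|$; the extra factor $1/|\Gamma(x,z,y)|$ you need is supplied by the averaging in the definition of $\nabla_{x,y}f(z)$ itself.
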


As a direct consequence of the above differentiation property, we are able to give an explicit expression of the derivative (with respect to time) of the relative entropy of $\nu_t^\pi$ with respect to an arbitrary reference measure.

\begin{cor} \label{parisbis}
Let $\nu_0$, $\nu_1$ and $\mu$ be three probability measures on $V$. Assume that 
$\nu_0, \nu_1$ are absolutely continuous with respect to $\mu$. 
Then, for any coupling $\pi \in \Pi(\nu_0,\nu_1)$, it holds
$$
\frac{\partial}{\partial t} H(\nu_t^\pi | \mu)_{|_{t=0}} 
= 
\sum_{\genfrac{}{}{0pt}{}{x,z \in V:}{z \sim x}} \left( \log  \frac{\nu_0(z)}{\mu(z)} - \log \frac{\nu_0(x)}{\mu(x)}  \right)
\sum_{y \in V} d(x,y)\frac{|\Gamma(x,z,y)|}{|\Gamma(x,y)|} \pi(x,y).
$$
\end{cor}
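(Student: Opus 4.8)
The plan is to derive Corollary \ref{parisbis} directly from the differentiation property of Proposition \ref{paris} by linearity in the coupling and by carefully computing the derivative at $t=0$. First I would write, for an arbitrary coupling $\pi\in\Pi(\nu_0,\nu_1)$ and a test function $f$ on $V$, that $\nu_t^\pi(f)=\sum_{x,y}\pi(x,y)\nu_t^{x,y}(f)$, so that by Proposition \ref{paris}
\[
\frac{\partial}{\partial t}\nu_t^\pi(f)=\sum_{x,y}\pi(x,y)\,d(x,y)\,\nu_t^{x,y}(\nabla_{x,y}f).
\]
Applying this with $f=\log\frac{d\nu_0}{d\mu}$ is the natural guess, but since $H(\nu_t^\pi|\mu)=\int \frac{d\nu_t^\pi}{d\mu}\log\frac{d\nu_t^\pi}{d\mu}\,d\mu$ depends on $t$ both through the ``density'' slot and the ``log'' slot, I would first justify that at $t=0$ the contribution of differentiating inside the logarithm vanishes: writing $g_t=\frac{d\nu_t^\pi}{d\mu}$, we have $\frac{d}{dt}H(\nu_t^\pi|\mu)=\int (\log g_t+1)\,\dot g_t\,d\mu$, and $\int \dot g_t\,d\mu=\frac{d}{dt}\nu_t^\pi(V)=0$, so in fact $\frac{d}{dt}H(\nu_t^\pi|\mu)=\int \log g_t\,\dot g_t\,d\mu$ for all $t$, and at $t=0$ this equals $\frac{\partial}{\partial t}\big|_{t=0}\nu_t^\pi(f)$ with $f=\log g_0=\log\frac{\nu_0}{\mu}$. (One should note $\nu_0,\nu_1\ll\mu$ is used here to make $\log g_0$ finite $\nu_0$-a.e.; strictly speaking the formula only involves $x$ with $\nu_0(x)>0$.)

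Next I would plug $f=\log\frac{\nu_0}{\mu}$ into the formula for $\frac{\partial}{\partial t}\big|_{t=0}\nu_t^\pi(f)$ and evaluate $\nu_t^{x,y}(\nabla_{x,y}f)$ at $t=0$. Since $\nu_0^{x,y}=\delta_x$, we get $\nu_0^{x,y}(\nabla_{x,y}f)=\nabla_{x,y}f(x)$. Now I must unwind the definition of $\nabla_{x,y}f(x)$: at $z=x$ we have $d(x,z)=0$, so $\nabla_\gamma f(x)=\frac{d(y,x)}{d(x,y)}\nabla_\gamma^+f(x)=\nabla_\gamma^+f(x)=f(\gamma_+(x))-f(x)$, where $\gamma_+(x)$ is the neighbor of $x$ on $\gamma$ heading toward $y$; then averaging over $\gamma\in\Gamma(x,x,y)=\Gamma(x,y)$,
\[
\nabla_{x,y}f(x)=\frac{1}{|\Gamma(x,y)|}\sum_{\gamma\in\Gamma(x,y)}\big(f(\gamma_+(x))-f(x)\big).
\]
Grouping the geodesics according to which neighbor $z\sim x$ they pass through first, the number of $\gamma\in\Gamma(x,y)$ with $\gamma_+(x)=z$ equals $|\Gamma(x,z,y)|$ when $z\in\llbracket x,y\rrbracket$ with $d(x,z)=1$ (and such $z$ are exactly the neighbors of $x$ lying on some geodesic to $y$), so
\[
\nabla_{x,y}f(x)=\frac{1}{|\Gamma(x,y)|}\sum_{z\sim x}|\Gamma(x,z,y)|\,\big(f(z)-f(x)\big).
\]
Substituting this and $f=\log\frac{\nu_0}{\mu}$ back, and using $\pi(x,y)=\nu_0(x)p(x,y)$ is not even needed here — just $\pi(x,y)$ directly — gives
\[
\frac{\partial}{\partial t}H(\nu_t^\pi|\mu)\Big|_{t=0}
=\sum_{x,y}\pi(x,y)\,d(x,y)\,\frac{1}{|\Gamma(x,y)|}\sum_{z\sim x}|\Gamma(x,z,y)|\Big(\log\tfrac{\nu_0(z)}{\mu(z)}-\log\tfrac{\nu_0(x)}{\mu(x)}\Big),
\]
and reindexing the sum so the $(x,z)$ pair with $z\sim x$ is outermost yields exactly the claimed identity.

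The main obstacle I anticipate is the justification of interchanging the time-derivative with the sum/integral and the handling of the logarithm: one needs that $t\mapsto H(\nu_t^\pi|\mu)$ is differentiable at $0$ with the expected derivative, which requires a small argument that $g_t(x)=\frac{d\nu_t^\pi}{d\mu}(x)$ is a polynomial in $t$ for each fixed $x$ (clear from \eqref{atlanta}, since each $\nu_t^{x,y}(z)$ is polynomial in $t$) and that, although $\log g_t$ may blow up where $g_0=0$, the product $\log g_t\cdot\dot g_t$ stays integrable near $t=0$ — or, more cleanly, one restricts attention throughout to the (finite) set $\{x:\nu_0(x)>0\}$ on which everything is smooth, and checks that the contributions from $x$ with $\nu_0(x)=0$ are $o(1)$ or vanish because $g_0(x)=0$ forces $\dot g_0(x)\ge 0$ and the $t\log t$ terms vanish at $t=0$. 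The rest is bookkeeping: carefully reading off $\nabla_{x,y}f(x)$ from the nested definitions and performing the reindexing, which is routine but must be done attentively to get the direction of the gradient (from $x$ toward $z$) right.
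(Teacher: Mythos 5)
Your proposal is correct and follows essentially the same route as the paper: both reduce $\frac{\partial}{\partial t}H(\nu_t^\pi|\mu)_{|t=0}$ to $\frac{\partial}{\partial t}\nu_t^\pi(F)_{|t=0}$ with $F=\log(\nu_0/\mu)$ using $\sum_z \frac{\partial}{\partial t}\nu_t^\pi(z)=0$, then apply Proposition \ref{paris}, note that $\nabla_\gamma F(x)=\nabla_\gamma^+F(x)$ at the endpoint $x$, and regroup the geodesics in $\Gamma(x,y)$ according to their first step $z\sim x$ to produce the factor $|\Gamma(x,z,y)|$. The extra care you take with differentiability and the set $\{\nu_0>0\}$ is a welcome addition that the paper leaves implicit.
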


The proof of Corollary \ref{parisbis} can be found below, while some example applications 
will be given in the next subsection. In order to prove Proposition \ref{paris}, we need some preparation.
Recall that $\mathcal{B}(n,t)$ denotes a binomial variable of parameter $n$ and $t$, and that, for any function 
$h \colon \{0,1,\ldots,n\} \to \R$, $\mathcal{B}(n,t)(h)=\sum_{k=0}^n h(k)C_n^kt^{k}(1-t)^{n-k}$.

\begin{lem}\label{deriv-binom}
Let $n\in \N^*$ and $t\in [0,1]$. For any function $h \colon \{0,1,\ldots,n\}\to\R$ it holds
$$
\frac{\partial}{\partial t} \mathcal{B}(n,t) (h) 
= 
\sum_{k=0}^n \left[(h(k+1)-h(k))(n-k)+(h(k)-h(k-1))k\right]\,C_n^kt^k(1-t)^{n-k},
$$
with the convention that $h(-1)=h(n+1)=0.$
\end{lem}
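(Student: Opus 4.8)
The plan is to compute the $t$-derivative of $\mathcal{B}(n,t)(h) = \sum_{k=0}^n h(k) C_n^k t^k (1-t)^{n-k}$ directly, and then reorganize the resulting sum by a discrete summation-by-parts (reindexing) argument. First I would differentiate term by term: since $\frac{\partial}{\partial t}\bigl[t^k(1-t)^{n-k}\bigr] = k t^{k-1}(1-t)^{n-k} - (n-k) t^k (1-t)^{n-k-1}$, one gets
\begin{equation*}
\frac{\partial}{\partial t} \mathcal{B}(n,t)(h) = \sum_{k=0}^n h(k) C_n^k \left[ k t^{k-1}(1-t)^{n-k} - (n-k) t^k (1-t)^{n-k-1} \right].
\end{equation*}
The goal is then to rewrite both pieces so that every summand carries the common weight $C_n^k t^k (1-t)^{n-k}$.

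The key identities are the Pascal-type relations $k C_n^k = n C_{n-1}^{k-1}$ and $(n-k) C_n^k = n C_{n-1}^{k}$, together with $C_n^{k+1}(k+1) = C_n^k (n-k)$, i.e. $C_n^{k+1} t^{k+1}(1-t)^{n-k-1}$ relates to $C_n^k t^k (1-t)^{n-k}$ up to the right combinatorial factor. Concretely, in the first piece I would shift the index $k \mapsto k+1$ to turn $k t^{k-1}(1-t)^{n-k}$ into something proportional to $t^k(1-t)^{n-k-1}$; in the second piece I leave it as is. After matching, the two sums combine and, using $h(-1)=h(n+1)=0$ to handle boundary terms, one extracts the coefficient of $C_n^k t^k (1-t)^{n-k}$, which should come out to exactly $(h(k+1)-h(k))(n-k) + (h(k)-h(k-1))k$. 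A cleaner bookkeeping route is to write $\frac{\partial}{\partial t}\mathcal{B}(n,t)(h) = n\bigl(\mathcal{B}(n-1,t)(h(\cdot+1)) - \mathcal{B}(n-1,t)(h(\cdot))\bigr)$ — this is the standard Bernstein-polynomial derivative formula — and then re-expand each $\mathcal{B}(n-1,t)$ back in terms of degree-$n$ Bernstein basis functions via $C_{n-1}^k t^k(1-t)^{n-1-k} = \frac{n-k}{n}C_n^k t^k(1-t)^{n-k} + \frac{k+1}{n}C_n^{k+1}t^{k+1}(1-t)^{n-k-1}$, collecting coefficients.

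The main obstacle is purely bookkeeping: getting the index shifts and the boundary conventions $h(-1)=h(n+1)=0$ to align so that the telescoping-style cancellation produces precisely the stated combination, without off-by-one errors at $k=0$ and $k=n$. There is no analytic difficulty — the functions are polynomials in $t$ — so the whole argument is a finite algebraic manipulation; I would carry it out by fixing the target expression $\sum_k \bigl[(h(k+1)-h(k))(n-k)+(h(k)-h(k-1))k\bigr]C_n^k t^k(1-t)^{n-k}$, expanding it into four sums, reindexing two of them, and checking it matches the differentiated expression computed above.
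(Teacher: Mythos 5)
Your proposal is correct and follows essentially the same route as the paper: differentiate $\sum_k h(k)C_n^k t^k(1-t)^{n-k}$ term by term, then use Pascal-type identities (the paper uses $kC_n^k=(n-k+1)C_n^{k-1}$ and $(n-k)C_n^k=(k+1)C_n^{k+1}$ together with $1=t+(1-t)$) and index shifts, with the conventions $h(-1)=h(n+1)=0$ absorbing the boundary terms, to collect the coefficient of $C_n^k t^k(1-t)^{n-k}$. Your Bernstein-derivative reformulation and the ``expand the target into four sums and reindex'' bookkeeping are only cosmetic variants of the same finite algebraic manipulation, and both close the argument.
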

\begin{proof}[Proof of Lemma \ref{deriv-binom}]
By differentiating in $t$, we have 
\begin{align*}
\frac{\partial}{\partial t} \mathcal{B}(n,t)(h)
&= \sum_{k=0}^n h(k)kC_n^kt^{k-1}(1-t)^{n-k} - \sum_{k=0}^n h(k)(n-k)C_n^kt^{k}(1-t)^{n-k-1} .
\end{align*}
Now, using that $1=t+(1-t)$ and that $kC_n^k= (n-k+1) C_n^{k-1}$, we get
\begin{align*}
k C_{n}^{k} t^{k-1}(1-t)^{n-k} 
& = 
kC_{n}^{k} t^{k}(1-t)^{n-k} + 
(n-k+1)C_{n}^{k-1} t^{k-1}(1-t)^{n-k+1} ,
\end{align*}
with the convention that $C_{n}^{-1}=0$.
Similarly, using that $(n-k)C_n^{k}= (k+1) C_n^{k+1}$, we have
\begin{align*}
(n-k) C_{n}^{k} t^{k}(1-t)^{n-k-1} 
& = 
(n-k) C_{n}^{k} t^{k}(1-t)^{n-k}+(k+1)C_{n}^{k+1} t^{k+1}(1-t)^{n-k-1} .
\end{align*}
Hence, 
\begin{align*}
\frac{\partial}{\partial t}\mathcal{B}(n,t)(h)&=
\sum_{k=0}^n h(k)
(n-k+1)C_{n}^{k-1} t^{k-1}(1-t)^{n-k+1}-\sum_{k=0}^n h(k)(n-k)C_{n}^{k} t^{k}(1-t)^{n-k}\\
&\qquad+\sum_{k=0}^n h(k)kC_{n}^{k} t^{k}(1-t)^{n-k}-\sum_{k=0}^n h(k)(k+1)C_{n}^{k+1}t^{k+1}(1-t)^{n-k-1}\\
&= \sum_{k=0}^{n} (h(k+1)-h(k))(n-k)C_n^kt^k(1-t)^{n-k} + \sum_{k=0}^{n} (h(k)-h(k-1))kC_n^kt^k(1-t)^{n-k},
\end{align*}
with the convention that $h(-1)=h(n+1)=0$.
\end{proof}

We were informed by E. Hillion that the above elementary lemma also appears in his thesis \cite{hillionthesis}.
We are now in a position to prove Proposition \ref{paris}.

\begin{proof}[Proof of Proposition \ref{paris}]
Set $n=d(x,y)$ and let $\Gamma$ be a random variable uniformly distributed on $\Gamma(x,y)$ and $N_t$ be a random variable with Binomial law $\mathcal{B}(n,t)$ independent of $\Gamma$. By definition $\nu_t^{x,y}$ is the law of $X_t = \Gamma_{N_t}.$
Using the independence, we have
\begin{align*}
\nu_t^{x,y}(f)= \E\left[ f(X_t)\right] = \sum_{k=0}^n h(k) C^k_nt^k(1-t)^{n-k},
\end{align*}
with $h(k)=\E[f(\Gamma_k)]$, $k=0,1\dots,n$.
According to Lemma \ref{deriv-binom}, we thus get
\begin{align*}
\frac{\partial}{\partial t}\nu_t^{x,y}(f)&= \sum_{k=0}^n \left[(h(k+1)-h(k))(n-k)+(h(k)-h(k-1))k\right]\,C_n^kt^k(1-t)^{n-k}\\
&=\E\left[(h(N_t+1)-h(N_t))(n-N_t)+(h(N_t)-h(N_t-1))N_t \right]\\
&=\E\left[(f(\Gamma_{N_t+1})-f(\Gamma_{N_t}))d(\Gamma_{N_t},y)+(f(\Gamma_{N_t})-f(\Gamma_{N_t-1}))d(x,\Gamma_{N_t}) \right]\\
&=\E\left[(f(\Gamma^+(X_t))-f(X_t))d(X_t,y)+(f(X_t)-f(\Gamma^-(X_t)))d(x,X_t) \right] \\
&=\E\left[d(x,y)\nabla_\Gamma f(X_t) \right].
\end{align*}
Finally, observe that the law of $\Gamma$ knowing $X_t=z\in \llbracket x,y\rrbracket$ is uniform on $\Gamma(x,z,y).$ Indeed,
\begin{align*}
\P(\Gamma =\gamma,\ X_t=z)&= \P(\Gamma = \gamma,\ \gamma_{N_t}=z) = \P(\Gamma = \gamma,\ N_t=d(x,z),\ z\in \gamma)
 = \frac{\1_{\Gamma(x,z,y)}(\gamma)}{|\Gamma(x,y)|}\P(N_t=d(x,z)).
\end{align*}
On the other hand,
$$\P(X_t=z)=\nu_t^{x,y}(z)=\P(N_t=d(x,z)) \frac{|\Gamma(x,z,y)|}{|\Gamma(x,y)|},$$
which proves the claim. 
By the definition of $\nabla_{x,y}f$, it thus follows that
$$\frac{\partial}{\partial t}\nu_t^{x,y}(f) = d(x,y)\,\nu_t^{x,y} (\nabla_{x,y}f),$$
which completes the proof.
%Finally observe that the first sum in the right hand side of the latter can be rewritten as follows
%$$
%\sum \frac{\1_{z\in\gamma} f(\gamma_+(z))}{|\Gamma(x,y)|}d(y,z)C_{d(x,y)}^{d(x,z)} t^{d(x,z)}(1-t)^{d(y,z)} ,
%$$
%while the last one equals
%$$
%\sum \frac{\1_{z\in\gamma} f(\gamma_-(z))}{|\Gamma(x,y)|}d(x,z)C_{d(x,y)}^{d(y,z)} t^{d(x,z)}(1-t)^{d(y,z)} .
%$$
%The expected result follows.
%\begin{align*}
%&=\sum_{z\in\Omega}\sum_{\gamma\in\Gamma(x,y)} \frac{\1_{z\in\gamma} f(\gamma_+(z))}{|\Gamma(x,y)|}d(y,z)C_{d(x,y)}^{d(x,z)} t^{d(x,z)}(1-t)^{d(y,z)}\\
%&\quad+\sum_{z\in\Omega}\sum_{\gamma\in\Gamma(x,y)} \frac{\1_{z\in\gamma} f(z)}{|\Gamma(x,y)|}d(x,z)C_{d(x,y)}^{d(x,z)} t^{d(x,z)}(1-t)^{d(y,z)}\\
%&\quad-\sum_{z\in\Omega}\sum_{\gamma\in\Gamma(x,y)} \frac{\1_{z\in\gamma} f(z)}{|\Gamma(x,y)|}d(y,z)C_{d(x,y)}^{d(x,z)} t^{d(x,z)}(1-t)^{d(y,z)}\\
%&\quad-\sum_{z\in\Omega}\sum_{\gamma\in\Gamma(x,y)} \frac{\1_{z\in\gamma} f(\gamma_-(z))}{|\Gamma(x,y)|}d(x,z)C_{d(x,y)}^{d(y,z)} t^{d(x,z)}(1-t)^{d(y,z)}.\\
%\end{align*}
\end{proof}

\begin{proof}[Proof of Corollary \ref{parisbis}]
For simplicity, let $F=\log (\nu_0/\mu)$. Observe that, since $\nu_0$ and $\nu_1$ are absolutely continuous with respect to $\mu$, so is $\nu_t^\pi$.
Now we observe that, since $\sum_{z \in V} \frac{\partial}{\partial t} \nu_t^\pi(z)=0$,  by Proposition \ref{paris}
(recall that $\nu_0^\pi=\nu_0$ and $\nu_0^{x,y}=\delta_x$ by construction),
\begin{align*}
\frac{\partial}{\partial t} H(\nu_t^\pi | \mu)_{|_{t=0}} 
& = 
\frac{\partial}{\partial t} \left( \sum_{z \in V}  \nu_t^\pi(z)  \log \frac{\nu_t^\pi(z)}{\mu(z)} \right)_{|_{t=0}}
=
\frac{\partial}{\partial t} \nu_t^\pi (F)_{|_{t=0}} 
 =
\sum_{(x,y) \in V^2} \pi(x,y) \frac{\partial}{\partial t} \nu_t^{x,y}(  F ) \\
& =
\sum_{(x,y) \in V^2} \pi(x,y) d(x,y) \nabla_{x,y}   F  (x) .
\end{align*}
By the definition of the gradient, for any $\gamma \in \Gamma(x,y)$, it holds
$\nabla_\gamma   F(x)
=
 \nabla_\gamma^+   F(x)$. Thus, by the definition of $\nabla_{x,y}F$, we get
\begin{align*}
\frac{\partial}{\partial t} H(\nu_t^\pi | \mu)_{|_{t=0}} 
& =
\sum_{(x,y) \in V^2} \frac{\pi(x,y) d(x,y)}{|\Gamma(x,y)|} \sum_{\gamma \in \Gamma(x,y)}  \nabla_\gamma^+ F (x).
%& =
%\sum_{x \in V} \sum_{z \sim x} (\log f(z) - \log f(x) ) \sum_{y \in V} \frac{\pi(x,y) d(x,y)}{|\Gamma(x,y)|} 
%\sum_{\gamma \in \Gamma}   \1_{\gamma \in \Gamma(x,y)} \1_{\gamma^+(x)=z} .
\end{align*} 
%By \eqref{sophiacardinale}, we have for any  $x,y \in V$ and any $z \sim x$
%$$
% \sum_{\gamma \in \Gamma}   \1_{\gamma \in \Gamma(x,y)} \1_{\gamma^+(x)=z}
% =
% \card \{\gamma \in \Gamma(x,y) : z \in \gamma \} \1_{z\in \llbracket x,y\rrbracket}
%=
%|\Gamma(z,y)| \1_{z\in \llbracket x,y\rrbracket}.
%$$
Now, observe that for $(x,y)\in V^2$ given, it holds
\begin{align*}
\sum_{\gamma \in \Gamma(x,y)}\nabla_\gamma^+F(x)&=\sum_{\gamma \in \Gamma(x,y)}F(\gamma^+(x))-F(x)=\sum_{z\sim x} (F(z)-F(x)) |\Gamma(x,z,y)|\,,
\end{align*}
completing the proof.
\end{proof}

%%%%%%%%%%%%%%%%%%%%%%%%%%%%%%%%%%%%%%%%%%%%%%%%%%%%%%%%%%%%%%%%%%%%%%%%%%%%%%%%%%%%%%%%

\subsection{Tensoring property}

In this section we prove that the path $(\nu_t^{x,y})_{t\in[0,1]}$ constructed in Section \ref{construction} does tensorise. This will appear to be crucial in deriving the displacement convexity of the entropy on product spaces. Moreover we shall prove that, in order to have this tensoring property, the law of the random variable $N_t$ introduced in the construction of the path $(\nu_t^{x,y})_{t\in[0,1]}$,  must be, modulo a change of time, a binomial (see Proposition \ref{loident} below).  
The tensoring property of  the path $(\nu_t^{x,y})_{t\in[0,1]}$  is the following.

\begin{lem} \label{tensor}
Let $G_1=(V_1,E_1)$, $G_2=(V_2,E_2)$ be two graphs and let $G=G_1 \boxempty G_2$ be their Cartesian product.
Then, for any $x=(x_1,x_2)$, $y=(y_1,y_2)$ and $z=(z_1,z_2)$ in $V_1 \times V_2$,
$$
\nu^{x,y}_t(z)=\nu^{x_1,y_1}_t(z_1)\nu^{x_2,y_2}_t(z_2).
$$
\end{lem}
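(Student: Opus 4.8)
The statement to prove is the tensorisation identity $\nu^{x,y}_t(z)=\nu^{x_1,y_1}_t(z_1)\,\nu^{x_2,y_2}_t(z_2)$ for $x=(x_1,x_2)$, $y=(y_1,y_2)$, $z=(z_1,z_2)$ in the Cartesian product $G=G_1\boxempty G_2$. The plan is to compare both sides using the explicit formula \eqref{atlanta} for $\nu_t^{x,y}$. The key structural fact is how the three basic ingredients — the graph distance, the ``betweenness'' relation, and the number of geodesics — behave under the Cartesian product, together with an elementary binomial (Vandermonde-type) identity for the combinatorial prefactors.

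\textbf{Step 1: reduce to the case $z\in\llbracket x,y\rrbracket$.} In the product graph with the $\ell^1$ metric one has $d(x,y)=d_1(x_1,y_1)+d_2(x_2,y_2)$, and a vertex $z$ lies on a geodesic from $x$ to $y$ if and only if $d(x,z)+d(z,y)=d(x,y)$, which (since each $d_i$ already satisfies the triangle inequality) forces $d_i(x_i,z_i)+d_i(z_i,y_i)=d_i(x_i,y_i)$ for $i=1,2$ simultaneously; that is, $z\in\llbracket x,y\rrbracket$ in $G$ iff $z_1\in\llbracket x_1,y_1\rrbracket$ in $G_1$ and $z_2\in\llbracket x_2,y_2\rrbracket$ in $G_2$. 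Hence both sides of the claimed identity vanish unless $z_1$ and $z_2$ are each between the corresponding endpoints, so it suffices to treat that case and one may freely write $a_i:=d_i(x_i,z_i)$, $b_i:=d_i(z_i,y_i)$, and $a:=a_1+a_2$, $b:=b_1+b_2$.

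\textbf{Step 2: count geodesics in the product.} A geodesic in $G_1\boxempty G_2$ from $x$ to $y$ is obtained by choosing a geodesic $\gamma^{(1)}\in\Gamma(x_1,y_1)$, a geodesic $\gamma^{(2)}\in\Gamma(x_2,y_2)$, and an interleaving — i.e. a choice of which of the $d(x,y)$ steps are ``$G_1$-steps'' — so that
\[
|\Gamma(x,y)|=\binom{d_1(x_1,y_1)+d_2(x_2,y_2)}{d_1(x_1,y_1)}\,|\Gamma(x_1,y_1)|\,|\Gamma(x_2,y_2)|,
\]
and likewise $|\Gamma(x,z)|=\binom{a_1+a_2}{a_1}|\Gamma(x_1,z_1)||\Gamma(x_2,z_2)|$ and $|\Gamma(z,y)|=\binom{b_1+b_2}{b_1}|\Gamma(z_1,y_1)||\Gamma(z_2,y_2)|$. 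Plugging these into \eqref{atlanta} for the left-hand side, and separately into the two factors on the right-hand side, the ratios $\frac{|\Gamma(x_i,z_i)||\Gamma(z_i,y_i)|}{|\Gamma(x_i,y_i)|}$ match up, and one is left to verify the purely numerical identity
\[
C_{a+b}^{a}\,t^{a}(1-t)^{b}\cdot\frac{C_{a_1+a_2}^{a_1}\,C_{b_1+b_2}^{b_1}}{C_{a+b}^{a_1+b_1}}
= \Bigl(C_{a_1+b_1}^{a_1}t^{a_1}(1-t)^{b_1}\Bigr)\Bigl(C_{a_2+b_2}^{a_2}t^{a_2}(1-t)^{b_2}\Bigr).
\]
The powers of $t$ and $1-t$ obviously agree ($a_1+a_2=a$, $b_1+b_2=b$), so this collapses to the binomial identity $C_{a+b}^{a}\,C_{a_1+a_2}^{a_1}\,C_{b_1+b_2}^{b_1}=C_{a_1+b_1}^{a_1}\,C_{a_2+b_2}^{a_2}\,C_{a+b}^{a_1+b_1}$, which is an immediate consequence of writing multinomial coefficients $\binom{a_1+a_2+b_1+b_2}{a_1,a_2,b_1,b_2}$ in two different ways (group $\{a_1,a_2\}$ vs $\{b_1,b_2\}$ on one side, $\{a_1,b_1\}$ vs $\{a_2,b_2\}$ on the other). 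A clean probabilistic alternative is to bypass \eqref{atlanta} entirely: realize $X_t=\Gamma_{N_t}$ with $N_t\sim\mathcal B(d(x,y),t)$ and note that, conditioning on the interleaving pattern, $N_t$ splits as an independent sum $N_t^{(1)}+N_t^{(2)}$ with $N_t^{(i)}\sim\mathcal B(d_i(x_i,y_i),t)$ — the standard thinning property of the binomial — and the two coordinate walks evolve independently, giving product law directly.

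\textbf{Main obstacle.} There is no serious obstacle; the only thing requiring care is bookkeeping the factorization of geodesic counts in the Cartesian product (Step 2) and confirming the one-to-one correspondence between product-geodesics and triples (geodesic in $G_1$, geodesic in $G_2$, interleaving), which rests on the $\ell^1$ structure of the product metric. Everything else is the elementary binomial identity. I would present the probabilistic thinning argument as the main line, since it makes the ``why'' transparent, and optionally remark that the computational check via \eqref{atlanta} and \eqref{sophiacardinale} gives the same conclusion.
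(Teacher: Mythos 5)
Your main argument (Steps 1--2) is correct and is essentially the paper's own proof: the same factorization $|\Gamma(x,y)|=C_{d(x,y)}^{d_1(x_1,y_1)}|\Gamma(x_1,y_1)||\Gamma(x_2,y_2)|$ via interleavings, the same use of \eqref{sophiacardinale} and \eqref{atlanta}, and the same binomial identity (which you justify, correctly, as two groupings of the multinomial coefficient $\tfrac{(a+b)!}{a_1!a_2!b_1!b_2!}$, where the paper leaves the verification to the reader). The probabilistic ``thinning'' aside is a valid repackaging but, once the conditioning on the interleaving is made precise, it reduces to exactly the same Vandermonde-type identity, so it does not constitute a genuinely different route.
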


\begin{proof}
Fix $x=(x_1,x_2)$, $y=(y_1,y_2)$ and $z=(z_1,z_2)$ in $V_1 \times V_2$. Then, we observe that, given two geodesics, one from $x_1$ to $y_1$, and one from $x_2$ to $y_2$, one can construct exactly $C_{d(x,y)}^{d(x_1,y_1)}$ different geodesics from $x$ to $y$ (by choosing the $d(x_1,y_1)$ positions where to change the first coordinate, according to the geodesic joining $x_1$ to $y_1$, and thus changing the second coordinate in the remaining $d(x_2,y_2)=d(x,y)-d(x_1,y_1)$ positions, according to the geodesic joining $x_2$ to $y_2$). This construction exhausts all the geodesics from $x$ to $y$. 
Hence,
\begin{equation}\label{eq:geod-prod}
|\Gamma(x,y)| = C_{d(x,y)}^{d(x_1,y_1)} |\Gamma(x_1,y_1)| \times  |\Gamma(x_2,y_2)|.
\end{equation}
Observe also that $z$ belongs to some geodesic from $x$ to $y$ if and only if $z_1$ and $z_2$ belong respectively to some geodesic from $x_1$ to $y_1$, and from $x_2$ to $y_2$. Therefore, by \eqref{sophiacardinale}, it follows that
$$|\Gamma(x,z,y)| = C_{d(x,z)}^{d(x_1,z_1)}C_{d(z,y)}^{d(z_1,y_1)}|\Gamma(x_1,z_1,y_1)| \times |\Gamma(x_2,z_2,y_2)|.$$
So, it holds that
\begin{align*}
\nu_t^{x,y}(z) & = C_{d(x,y)}^{d(x,z)} t^{d(x,z)} (1-t)^{d(y,z)}\; \frac{|\Gamma(x,z,y)| } {|\Gamma(x,y)|}    \\
& = 
\frac{C_{d(x,y)}^{d(x,z)} C_{d(x,z)}^{d(x_1,z_1)} C_{d(y,z)}^{d(y_1,z_1)} } {C_{d(x,y)}^{d(x_1,y_1)}} 
t^{d(x_1,z_1)} (1-t)^{d(y_1,z_1)} \frac{|\Gamma(x_1,z_1,y_1)| } {|\Gamma(x_1,y_1)|}
t^{d(x_2,z_2)} (1-t)^{d(y_2,z_2)} \frac{|\Gamma(x_2,z_2,y_2)|} {|\Gamma(x_2,y_2)|} 
\\
& =
\nu^{x_1,y_1}_t(z_1)\nu^{x_2,y_2}_t(z_2)\,,
\end{align*}
where we used that $d(x,z)=d(x_1,z_1)+d(x_2,z_2)$, and similarly for $d(y,z)$, and the fact (that the reader can easily verify) that
$$
\frac{C_{d(x,y)}^{d(x,z)} C_{d(x,z)}^{d(x_1,z_1)} C_{d(y,z)}^{d(y_1,z_1)} } {C_{d(x,y)}^{d(x_1,y_1)}} =
C_{d(x_1,y_1)}^{d(x_1,z_1)} C_{d(x_2,y_2)}^{d(x_2,z_2)} .
$$
%This ends the proof.
\end{proof}

\begin{prop}\label{loident}
In the construction of $\nu_t^{x,y}$, $t \in [0,1]$, use a general random variable $N_t^{d(x,y)} \in \{0,1,\dots,d(x,y)\}$, of parameter $d(x,y)$ and $t$, that satisfies a.s. $N_0^{d(x,y)} = 0$ and $N_1^{d(x,y)} = d(x,y)$ (instead of the Binomial, observe that this condition is here to ensure that $\nu_0^{x,y}=\delta_x$ and $\nu_1^{x,y}=\delta_y$, namely that $\nu_t^{x,y}$ is still an interpolation between the two Dirac measures) , so that
\begin{equation*} %\label{atlanta}
\nu_t^{x,y}(z)= \P\left( N_t^{d(x,y)} = d(x,z) \right) \frac{| \Gamma(x,z,y)|}{| \Gamma(x,y)|} .
\end{equation*}
Let $G_1=(V_1,E_1)$, $G_2=(V_2,E_2)$ be two graphs and let $G=G_1 \boxempty G_2$ be their Cartesian product.
Assume that for any $x=(x_1,x_2)$, $y=(y_1,y_2)$ and $z=(z_1,z_2)$ in $V_1 \times V_2$,
$$
\nu^{x,y}_t(z)=\nu^{x_1,y_1}_t(z_1)\nu^{x_2,y_2}_t(z_2) \qquad \forall t \in [0,1].
$$
Then, there exists a function $a \colon [0,1] \to [0,1]$ with $a(0)=0$, $a(1)=1$, such that $N_t^{d(x,y)} \sim \mathcal{B}(a(t),d(x,y))$.
\end{prop}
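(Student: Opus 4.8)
The plan is to exploit the tensorisation hypothesis on longer and longer products in order to pin down the distribution of $N_t^n$ for every $n$. The key idea is that a single edge, say $G_1 = (\{a,b\},\{(a,b)\})$, has only one nontrivial interpolation: for $x_1 = a$, $y_1 = b$ we get $\nu_t^{a,b}(a) = \P(N_t^1 = 0)$ and $\nu_t^{a,b}(b) = \P(N_t^1 = 1)$. Write $a(t) := \P(N_t^1 = 1)$; the assumptions $N_0^1 = 0$, $N_1^1 = 1$ give $a(0) = 0$, $a(1) = 1$. Now take $G = G_1 \boxempty \cdots \boxempty G_1$ ($n$ copies of an edge), i.e.\ the hypercube $\Omega_n$, and apply the tensorisation identity iteratively: for $x = (a,\dots,a)$, $y = (b,\dots,b)$ and any $z$ at Hamming distance $k$ from $x$, we get
\[
\nu_t^{x,y}(z) = \prod_{i=1}^n \nu_t^{a,b}(z_i) = a(t)^k (1-a(t))^{n-k}.
\]
On the other hand, the defining formula for $\nu_t^{x,y}$ in terms of $N_t^n$ gives $\nu_t^{x,y}(z) = \P(N_t^n = k)\, |\Gamma(x,z,y)|/|\Gamma(x,y)|$. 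Since on the hypercube $|\Gamma(x,z,y)| = k!\,(n-k)!$ and $|\Gamma(x,y)| = n!$, this ratio is $1/C_n^k$, so $\P(N_t^n = k) = C_n^k\, a(t)^k (1-a(t))^{n-k}$, which is exactly $\mathcal{B}(n, a(t))$.

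**From the hypercube to general graphs.** The above already identifies $N_t^n$ as binomial with parameter $a(t)$ for every $n$, because $N_t^n$ was assumed to depend only on the \emph{parameters} $n$ and $t$ (it is ``of parameter $d(x,y)$ and $t$''), not on the particular graph — the law of $N_t^{d(x,y)}$ is the same object whatever edge-count realizes that distance. So once we know it on the hypercube, we know it everywhere. (Alternatively, one can verify directly that the formula $\nu_t^{x,y}(z) = C_{d(x,y)}^{d(x,z)} a(t)^{d(x,z)}(1-a(t))^{d(y,z)} |\Gamma(x,z,y)|/|\Gamma(x,y)|$ is consistent with tensorisation on any Cartesian product, using \eqref{sophiacardinale}, \eqref{eq:geod-prod} and the binomial identity already checked in the proof of Lemma~\ref{tensor}.)

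**Continuity/measurability of $a$.** One should remark that $a(t) = \P(N_t^1 = 1) = \nu_t^{a,b}(b)$ is automatically a well-defined function $[0,1] \to [0,1]$; the statement asks only for such a function with $a(0)=0$, $a(1)=1$, which we have produced, so no regularity is needed. (If one wanted continuity or monotonicity, that would require an extra hypothesis on $t \mapsto N_t^n$, which the statement does not assume and does not claim to deliver.)

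**Main obstacle.** The only genuinely delicate point is the bookkeeping that shows $\P(N_t^n = k)$ is forced to be \emph{proportional} to $C_n^k$ by the product structure: the tensorisation identity constrains only the products $\nu_t^{x,y}(z)$, and one must make sure that the constraints coming from all choices of $z$ (equivalently, all values $k = 0, 1, \dots, n$), together with the normalization $\sum_z \nu_t^{x,y}(z) = 1$, are enough to determine $\P(N_t^n = k)$ completely rather than just up to some slack. Choosing the pure-edge product $G_1^{\boxempty n}$ makes this transparent, since there $\nu_t^{x,y}(z)$ factors into $n$ identical terms $\nu_t^{a,b}(z_i) \in \{a(t), 1-a(t)\}$ with no residual freedom, and the geometric factor $|\Gamma(x,z,y)|/|\Gamma(x,y)| = 1/C_n^k$ is explicit; so the proportionality, and hence the binomial law with parameter $a(t)$, drops out immediately. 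I would therefore organize the write-up around that single clean choice of product graph and then invoke the parameter-only dependence of $N_t^{d(x,y)}$ to conclude in general.
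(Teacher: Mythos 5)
Your argument is correct, but it takes a genuinely different route from the paper's. The paper works with the two given factors $G_1,G_2$ in full generality: writing $p_{n,k}=\P(N_t^n=k)/C_n^k$, it shows that the tensorisation identity is equivalent to the multiplicative functional equation $p_{n,k}=p_{n_1,k_1}\,p_{n-n_1,k-k_1}$ (for all admissible $n_1,k_1$), and then solves this equation by the two special choices $(n_1,k_1)=(1,0)$ and $n=k$, $(n_1,k_1)=(k-1,k-1)$, which yield $p_{n,k}=b^{n-k}(1-b)^k$ with $b=p_{1,0}$. You instead specialize the product to $n$ copies of a single edge, where the geometric factor $|\Gamma(x,z,y)|/|\Gamma(x,y)|=k!(n-k)!/n!=1/C_n^k$ is explicit and the iterated tensorisation immediately forces $\P(N_t^n=k)=C_n^k\,a(t)^k(1-a(t))^{n-k}$ with $a(t)=\P(N_t^1=1)$. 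Your computation is right, and the transfer to general graphs is legitimate because the construction stipulates that the law of $N_t^{d(x,y)}$ depends only on $d(x,y)$ and $t$, not on the ambient graph. The trade-off is in how one reads the hypothesis: the proposition is stated for one fixed pair $(G_1,G_2)$, and your proof needs the tensorisation identity for the pairs $\bigl(G_1, G_1^{\boxempty(n-1)}\bigr)$ for every $n$, i.e.\ the ``universal'' reading in which tensorisation is demanded over arbitrary Cartesian products (which is clearly the intended one, given the remark preceding the proposition). Conversely, the paper's single-application argument is itself only as strong as the set of distance configurations $(n,n_1,k,k_1)$ actually realized inside $G_1\boxempty G_2$, so neither version is strictly more rigorous than the other on this point; yours buys explicitness and avoids solving the functional equation, at the cost of invoking higher products. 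It would be worth one sentence in your write-up acknowledging that you are using the hypothesis on $n$-fold products and the graph-independence of the family $\{N_t^n\}_n$.
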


\begin{proof}
Following the proof of Lemma \ref{tensor} we have,
\begin{align*}
\nu_t^{x,y}(z) & = \P\left( N_t^{d(x,y)} = d(x,z) \right) \frac{|\Gamma(x,z,y) |} {| \Gamma(x,y)|}   \\
& = 
\frac{C_{d(x,z)}^{d(x_1,z_1)} C_{d(y,z)}^{d(y_1,z_1)} } {C_{d(x,y)}^{d(x_1,y_1)}} \P\left( N_t^{d(x,y)} = d(x,z) \right) 
\;
 \frac{| \Gamma(x_1,z_1,y_1) |} {| \Gamma(x_1,y_1)|} 
\;
 \frac{| \Gamma(x_2,z_2,y_2) |} {| \Gamma(x_2,y_2)|} \,.
\end{align*}
On the other hand,
$$
\nu^{x_1,y_1}_t(z_1) =  \P\left( N_t^{d(x_1,y_1)} = d(x_1,z_1) \right) \frac{| \Gamma(x_1,z_1,y_1) |} {| \Gamma(x_1,y_1)|} 
$$
and
$$
\nu^{x_2,y_2}_t(z_2) =  \P\left( N_t^{d(x_2,y_2)} = d(x_2,z_2) \right) \frac{| \Gamma(x_2,z_2,y_2)| } {| \Gamma(x_2,y_2)|} 
.
$$
Hence, the identity $\nu^{x,y}_t(z)=\nu^{x_1,y_1}_t(z_1)\nu^{x_2,y_2}_t(z_2)$ ensures that
$$
\frac{C_{d(x,z)}^{d(x_1,z_1)} C_{d(y,z)}^{d(y_1,z_1)} } {C_{d(x,y)}^{d(x_1,y_1)}} \P\left( N_t^{d(x,y)} = d(x,z) \right) 
=
\P\left( N_t^{d(x_1,y_1)} = d(x_1,z_1) \right) \P\left( N_t^{d(x_2,y_2)} = d(x_2,z_2) \right)
$$
for any $z_1\in \llbracket x_1,y_1\rrbracket$, $z_2\in \llbracket x_2,y_2\rrbracket$.

Now, observe that
$$
\frac{C_{d(x,z)}^{d(x_1,z_1)} C_{d(y,z)}^{d(y_1,z_1)} } {C_{d(x,y)}^{d(x_1,y_1)}}
=
\frac{ C_{d(x_1,y_1)}^{d(x_1,z_1)} C_{d(x_2,y_2)}^{d(x_2,z_2)}}{C_{d(x,y)}^{d(x,z)}} .
$$
Hence, the latter can be rewritten as
$$
\frac{\P\left( N_t^{d(x,y)} = d(x,z) \right)}{C_{d(x,y)}^{d(x,z)}}
=
\frac{\P\left( N_t^{d(x_1,y_1)} = d(x_1,z_1) \right)}{C_{d(x_1,y_1)}^{d(x_1,z_1)}} 
\times 
\frac{ \P\left( N_t^{d(x_2,y_2)} = d(x_2,z_2) \right)}{C_{d(x_2,y_2)}^{d(x_2,z_2)}} .
$$
Set, for simplicity, for any $n,k$, $0 \leq k \leq n$
$$
p_{n,k} := \frac{\P\left( N_t^{n} = k \right)}{C_n^k} .
$$
Notice that $p_{n,k}$ depends also on $t$, while not explicitly stated.
We end up with the following induction formula
\begin{equation} \label{induction}
p_{n,k} = p_{n_1,k_1} \cdot  p_{n-n_1,k-k_1}
\end{equation}
for any integers $k_1,n_1, k,n$ satisfying the following conditions
$$
 k,n_1 \leq n, \qquad k_1 \leq \min(k,n_1), \qquad \mbox{and} \quad n_1-k_1 \leq n-k .
$$
(We set, $n=d(x,y)$, $n_1=d(x_1,y_1)$, $k=d(x,z)$ and $k_1=d(x_1,z_1)$).

The special choice $n_1=1$, $k_1=0$ leads to
\begin{equation}\label{1}
p_{n,k} = p_{1,0} \cdot  p_{n-1,k} .
\end{equation}
Hence, it cannot be that $p_{1,0}=0$ (otherwise we would have $p_{n,k}=0$ for any $k \geq 0$, any $n \geq 1$, which clearly is impossible
since $\sum_{k=0}^n C_n^k p_{n,k} = 1$).

%On the other hand, the special choice $n=n_1=1$, $k=k_1=0$ leads to $p_{1,0}=p_{1,0}p_{0,0}$. In turn, since 
%$p_{1,0} \neq 0$, $p_{0,0}=1$.

Set $b=b(t)=p_{1,0}$. From \eqref{1} we deduce that
$$
p_{n,k} = b^{n-k} p_{k,k} .
$$
Finally, the special choice $n=k$, $n_1=k_1=k-1$, in \eqref{induction}, ensures that
$$
p_{k,k} = p_{k-1,k-1} \cdot p_{1,1} .
$$
Since $p_{1,0}+p_{1,1} = 1$, the latter reads as
$$
p_{k,k} = p_{1,1}^k = (1-b)^k .
$$
It follows that
$$
p_{n,k} = b^{n-k} (1-b)^k  \qquad \forall n, \; \forall  k \leq n .
$$

Now set $a(t)=1-b(t)$ to end up with
$$
\P\left( N_t^{n} = k \right) = C_n^k a^k (1-a)^{n-k} \,,
$$
which guarantees that $N_t^{d(x,y)}$ is indeed a binomial variable of parameter $a(t)$ and $d(x,y)$.

To end the proof, it is suffices  to observe that $N_0^{d(x,y)}=0$ implies  $a(0)=0$, and that $N_1^{d(x,y)}=d(x,y)$ implies $a(1)=1$.
\end{proof}

%%%%%%%%%%%%%%%%%%%%%%%%%%%%%%%%%%%%%%%%%%%%%%%%%%%%%%%%%%%%%%%%%%%%%%%%%%%%%%%%%%%%%%%%%%%%%%

\subsection{Examples}

In this section we collect some elementary facts on specific examples. Namely we give explicit expressions of $\nu_t^{x,y}$, and derive some properties, when available, on  the complete graph, the two-point space, and  the hypercube.
% and on the slices of the cube.

\subsubsection{Complete graph $K_n$} \label{sec:complete}

Let $K_n$ be the complete graph with $n$ vertices. Then, given any two points $x,y \in K_n$, there  exists only one geodesic from $x$ to $y$, namely $\Gamma(x,y)=\{(x,y)\}$.
Hence, by construction of $\nu_t^{x,y}$, we have 
\begin{equation} \label{marne}
\nu_t^{x,y}(z)=0 \;\forall z \neq x,y; \quad \nu_t^{x,y}(x)= 1-t, \quad \mbox{and} \quad  \nu_t^{x,y}(y)= t.
\end{equation}
Therefore, for any coupling $\pi$ with marginals $\nu_0$ and $\nu_1$ (two given probability measures on $K_n$), we have
for any $z \in K_n$,
\begin{align*}
\nu_t^\pi(z) 
& = 
\sum_{(x,y) \in C(z)} \nu_t^{x,y}(z) \pi(x,y) 
=
\sum_{y \in K_n} \nu_t^{z,y}(z) \pi(z,y) + \sum_{x \in K_n} \nu_t^{x,z}(z) \pi(x,z) \\
& =
(1-t) \sum_{y \in K_n} \pi(z,y) + t \sum_{x \in K_n} \pi(x,z) 
 =
(1-t) \nu_0(z) + t \nu_1(z) .
\end{align*}
As a conclusion, on the complete graph, $\nu_t^\pi$ is a simple linear combination of $\nu_0$ and $\nu_1$ that does not depend on $\pi$.

Moreover, under the assumption of Corollary \ref{parisbis}, since $d(x,y)= |\Gamma(x,y)| = |\Gamma(z,y)|=1$,  we have
\begin{align*}
\frac{\partial}{\partial t} H(\nu_t^\pi | \mu)_{|_{t=0}}  
& = 
\sum_{x \in K_n} \sum_{z \sim x} (\log f(z) - \log f(x) )  \pi(x,z) 
 =
\sum_{z \in K_n} \log f(z)  \nu_1(z) - \sum_{x \in K_n} f(x) \log f(x)  \mu(x)
\end{align*}
where we set for simplicity $f=\nu_0/\mu$.  On the other hand, since $f$ is a density with respect to $\mu$,
\begin{align*}
- \mathcal{E}_\mu(f, \log f) 
& :=
- \frac{1}{2} \sum_{x,z \in K_n} (\log f(z) - \log f(x) )(f(z) -f(x)) \mu(x) \mu(z) \\
& = 
\sum_{z \in K_n} \log f(z)  \mu(z) - \sum_{x \in K_n} f(x) \log f(x)  \mu(x) .
\end{align*}
Hence, if $\nu_1 = \mu \equiv 1/n$ is the uniform measure on $K_n$ (notice all the measures on $K_n$ are then absolutely continuous with respect to $\mu$), we can conclude that
\begin{equation}\label{crepe}
\frac{\partial}{\partial t} H(\nu_t^\pi | \mu)_{|_{t=0}}  = - \mathcal{E}_\mu(f, \log f) .
\end{equation}
Note that, when $\mu \equiv 1/n$, $\mathcal{E}_\mu$ corresponds to the Dirichlet form associated to the uniform chain on the complete graph (each point can jumps to each point with probability $1/n$).

As a summary, on the complete graph we have:
\noindent For any coupling $\pi$, for any $t \in [0,1]$,
$$
\nu_t^\pi = (1-t)\nu_0 + t \nu_1 .
$$
For $\nu_1 = \mu \equiv 1/n$ and $f=\nu_0/\mu$, it holds
$$
\frac{\partial}{\partial t} H(\nu_t^\pi | \mu)_{|_{t=0}}  = - \mathcal{E}_\mu(f, \log f) .
$$

\subsubsection{The two-point space}\label{sec:two-point}

The previous computations apply in particular to the two-point space $\{0,1\}$.
In this specific case, let us consider $\mu$ to be a Bernoulli$(p)$ measure (\textit{i.e.}\ $\mu(1)=p=1-q=1-\mu(0)$). As above, $\nu_t^\pi = (1-t)\nu_0 + t \nu_1$, for any coupling $\pi$ of $\nu_0$ and $\nu_1$.  Moreover, it can also be checked by an easy computation that, for any $t \in [0,1]$,
$$  
\frac{\partial^2}{\partial t^2} H(\nu_t^\pi | \mu)  = \frac{C^2} {(\nu_0(0)+tC)(\nu_0(1)-tC)} \ge 4C^2\,,
$$
where $C =\nu_1(0)-\nu_0(0)$, and $\|\nu_0-\nu_1\|_{TV} = |\nu_1(0)-\nu_0(0)|$.
As a result, one arrives at the following displacement convexity of the entropy of $\nu^\pi_t$ on the two-point space:
\begin{equation}\label{eq:DC-2pt}
H(\nu^\pi_t|\mu) \leq (1-t)H(\nu_0|\mu) + t H(\nu_1|\mu) 
- 2 t(1-t) \|\nu_0 - \nu_1\|_{TV}^2, 
\qquad t \in [0,1] \,.
\end{equation}
In Section~\ref{sec:Section DC} below, we refine the above inequality further, and generalize in two ways -- by deriving displacement convexity of entropy on the complete graph and the $n$-dimensional hypercube. 

As an application, let us set $\nu_1=\mu$,  and use $f=\nu_0/\mu$ for the density; taking the limit $t \to 0$, and using
$$
\frac{\partial}{\partial t} H(\nu_t^\pi | \mu)_{|_{t=0}}  
= 
- \frac{pq}{2} (f(1)-f(0))(\log f(1) - \log f(0))
=:
 -  \mathcal{E}_\mu(f, \log f)\,,
$$ 
we get a {\em reinforced} modified logarithmic Sobolev inequality on the two-point space of the following type:
\begin{equation}\label{eq:RMLSI-2pt}
\ent_\mu(f) \le \mathcal{E}_\mu(f, \log f) - 2 \|f\mu - \mu\|_{TV}^2\,.
\end{equation}
In the above,  $\mathcal{E}_\mu(f, \log f)$ corresponds to the Dirichlet form associated with the Markov chain jumping from $0$ to $1$ with probability $p$ and from $1$ to $0$ with probability $q$.  The inequality is a reinforcement of a modified log-Sobolev inequality, considered by previous researchers (as mentioned in the introduction), which lacks the negative term. Similarly to \eqref{eq:DC-2pt}, we also refine \eqref{eq:RMLSI-2pt} further in Proposition~\ref{hwitwopoints}.

\subsubsection{The $n$-dimensional hypercube $\Omega_n$} \label{sec:hypercube}

Consider the $n$-dimensional hypercube $\Omega_n=\{0,1\}^n$ whose edges consist of pairs of vertices p that differ  in precisely one coordinate. The graph distance here coincides with the Hamming distance:
$$
d(x,y)= \sum_{i=1}^n \1_{x_i \neq y_i} ,\quad x,y \in \Omega_n.
$$
Then, one observes that $|\Gamma(x,y)|=d(x,y)!$ (since, in order to move from $x$ to $y$ in the shortest way, one just needs to choose, among $d(x,y)$ coordinates where $x$ and $y$ differ, the order of the flips ({\it i.e.}\ moves from $x_i$ to $1-x_i$)). It follows from \eqref{atlanta} that, as soon as $z$ belongs to a geodesic from $x$ to $y$,
$$
\nu_t^{x,y}(z) = C_{d(x,y)}^{d(x,z)} t^{d(x,z)}(1-t)^{d(y,z)} \frac{d(x,z)! d(y,z)!}{d(x,y)!} =
t^{d(x,z)}(1-t)^{d(y,z)},
$$
and $\nu_t^{x,y}(z)=0$ if $z$ does not belong to a geodesic from $x$ to $y$.

This expression can be recovered using the tensorisation property above. Namely, observe that Equation \eqref{marne} can be rewritten for the two-point space as follows, for all coordinates:
$$
\nu_t^{x_i,y_i}(z_i)= \1_{\{x_i,y_i\}}(z_i) t^{d(x_i,z_i)}(1-t)^{d(y_i,z_i)} .
$$
Hence, by Lemma \ref{tensor}, 
$$
\nu_t^{x,y}(z) = \prod_{i=1}^n \nu_t^{x_i,y_i}(z_i) = t^{d(x,z)}(1-t)^{d(y,z)}\,,
$$
as soon as $z$ belongs to a geodesic from $x$ to $y$, and $0$ otherwise. Observe that the latter can also be rewritten 
in terms of a product of probability measures on the fibers as
\begin{equation} \label{product}
\nu_t^{x,y} = \otimes_{i=1}^n ((1-t) \delta_{x_i} + t\delta_{y_i}) .
\end{equation}

Given two probability measures on $\Omega_n$, and a coupling $\pi$ on $\Omega_n \times \Omega_n$, we can finally define
$$
\nu_t^\pi (z) = \sum_{(x,y)\in \Omega_n^2} t^{d(x,z)}(1-t)^{d(y,z)} \pi(x,y) .
$$
On the $n$-dimensional hypercube we have:
\noindent for any couple $(x,y)\in \Omega_n^2$ and for any $t \in [0,1]$,
$$
\nu_t^{x,y} =  \sum_{z \in \llbracket x,y\rrbracket}t^{d(x,z)}(1-t)^{d(y,z)}\delta_z = \otimes_{i=1}^n ((1-t) \delta_{x_i} + t\delta_{y_i}).
$$

\section{Weak transport cost} \label{sec:wtc}

In this section we recall a notion of a discrete Wasserstein-type distance, called weak transport cost -- introduced and studied in \cite{marton96, samson},
%by the third author in \cite{samson} 
developed further in \cite{GRST} --  and collect some useful facts from \cite{GRST}.
Also, we introduce the notion of a  Knothe-Rosenblatt coupling which will play a crucial role in the displacement convexity of the entropy property on product spaces.

%%%%%%%%%%%%%%%%%%%%%%%%%%%%%%%%%%%%%%%%%%%%%%%%%%%%%%%%%%%%%%%%%%%%%%%%%%%%%%%%%%%%

\subsection{Definition and first properties}

For the notion of a weak transport cost, first recall the definition of $P(\nu_0,\nu_1)$ introduced in Section \ref{sec:notation}.

\begin{defi} \label{weak}
Let $\nu_0, \nu_1 \in \mathcal{P}(V)$. Then, the weak transport cost $\Tw_{2}(\nu_1 |\nu_0 )$ between $\nu_0$ and $\nu_1$ is defined as
$$
\Tw_{2}(\nu_1 |\nu_0 ):=\inf_{p \in P(\nu_0,\nu_1)} \sum_{x \in V} \left( \sum_{y \in V} d(x,y) p(x,y) \right)^2 \nu_0(x) .
$$

\end{defi}

It can be shown that $$(\nu_{0},\nu_{1})\mapsto \sqrt{\Tw_2(\nu_{1}|\nu_{0})} +\sqrt{\Tw_2(\nu_{0}|\nu_{1})}$$ is a distance on $\mathcal{P}(V)$, see \cite{GRST}.

Also recall from the introduction, the following notation: given $\pi \in \Pi(\nu_0,\nu_1)$, consider the kernels $p\in P(\nu_{0},\nu_{1})$ and $\bar{p}\in P(\nu_{1},\nu_{0})$ defined by $\pi(x,y)=\nu_0(x)p(x,y) = \nu_{1}(y)\bar{p}(y,x) $ and set
\begin{eqnarray}%\label{weakbis}
I_2(\pi):=  \sum_{x \in V} \left( \sum_{y \in V} d(x,y) p(x,y) \right)^2 \nu_0(x),
\end{eqnarray}
\begin{eqnarray*}
\bar{I}_2(\pi):=  \sum_{y \in V} \left( \sum_{x \in V} d(x,y) \bar{p}(y,x) \right)^2 \nu_1(y),
\end{eqnarray*}
and
\begin{eqnarray*}
J_2(\pi):=   \left(\sum_{x \in V} \sum_{y \in V} d(x,y)\pi(x,y) \right)^2.
\end{eqnarray*}
With this notation, 
$$\Tw_{2}(\nu_{0}|\nu_{1})=\inf_{\pi \in \Pi(\nu_{0},\nu_{1})}I_{2}(\pi) .
$$
Also, define
$$
\hat {\mathcal{T}}_{2}(\nu_0 ,\nu_1 ) :=\inf_{\pi \in \Pi(\nu_{0},\nu_{1})}J_{2}(\pi),
$$
and observe that $\hat {\mathcal{T}}_{2}(\nu_0 ,\nu_1 )=W_1^2(\nu_{0},\nu_{1})$ where $W_1$ is the usual $L_1$-Wasserstein distance associated to the distance $d$.

When $\nu_0$ and $\nu_1$ are absolutely continuous with respect to some probability measure $\mu$, and $d$ is the Hamming distance $d(x,y)=\1_{x \neq y}$, $x,y \in V$,  the weak transport cost  and the   $L_1$-Wasserstein distance
take an explicit form. This is stated in the next lemma. We give the proof for completeness.

\begin{lem}[\cite{GRST}] \label{lem:postive}
Assume that $\nu_0, \nu_1 \in \mathcal{P}(V)$ are absolutely continuous with respect to a third probability measure 
$\mu \in  \mathcal{P}(V)$, with respective densities $f_0$ and $f_1$. Assume that $d(x,y)=\1_{x \neq y}$, $x,y \in V$. Then it holds
$$
\Tw_2(\nu_1 |\nu_0 ) = \int \left[1-\frac{f_1}{f_0} \right]_+^2 f_0\,d\mu 
$$
where $[X]_+=\max(X,0)$, and 
$$
\sqrt{\hat {\mathcal{T}}_{2}(\nu_0 ,\nu_1 )}=\int \left[f_0-f_1 \right]_+ \,d\mu =\frac 12 \int \left|f_0-f_1 \right| \,d\mu = \frac 12 \|\nu_0-\nu_1\|_{TV}
$$
with $\|\cdot  \|_{TV}$, the total variation norm.
\end{lem}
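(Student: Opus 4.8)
The plan is to prove the two identities in Lemma \ref{lem:postive} directly from the definitions, exploiting the fact that the Hamming distance $d(x,y)=\1_{x\neq y}$ collapses the transport costs into something one-dimensional in nature.

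\textbf{The weak transport cost.} First I would unfold the definition: for $p\in P(\nu_0,\nu_1)$ and the Hamming distance,
\[
\sum_{y\in V} d(x,y)p(x,y) = \sum_{y\neq x} p(x,y) = 1 - p(x,x),
\]
so that
\[
\Tw_2(\nu_1|\nu_0) = \inf_{p\in P(\nu_0,\nu_1)} \sum_{x\in V} (1-p(x,x))^2 \nu_0(x).
\]
The constraint $p\in P(\nu_0,\nu_1)$ means $\sum_x \nu_0(x)p(x,y)=\nu_1(y)$ for all $y$; in particular $\nu_0(x)p(x,x)\le \nu_1(x)$, i.e.\ $p(x,x)\le f_1(x)/f_0(x)$ wherever $f_0(x)>0$. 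Hence each term satisfies $(1-p(x,x))^2\ge [1-f_1(x)/f_0(x)]_+^2$, giving the lower bound $\Tw_2(\nu_1|\nu_0)\ge \int [1-f_1/f_0]_+^2 f_0\,d\mu$. For the matching upper bound I would exhibit an explicit kernel: set $p(x,x)=\min(1, f_1(x)/f_0(x))$ (when $f_0(x)>0$), and distribute the remaining mass $\nu_0(x)(1-p(x,x)) = [\nu_0(x)-\nu_1(x)]_+$ over the vertices $y$ where there is a deficit $[\nu_1(y)-\nu_0(y)]_+ = \nu_1(y)-\nu_0(y)p(y,y)$. Since $\sum_x [\nu_0(x)-\nu_1(x)]_+ = \sum_y [\nu_1(y)-\nu_0(y)]_+$ (both marginals being probability measures), such a redistribution exists (any coupling of the two "excess" subprobability measures, normalized, works), and it produces a valid $p\in P(\nu_0,\nu_1)$ achieving $\sum_x (1-p(x,x))^2\nu_0(x) = \int[1-f_1/f_0]_+^2 f_0\,d\mu$. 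This proves the first identity.

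\textbf{The cost $\hat{\mathcal T}_2 = W_1^2$.} Here I would use that $\hat{\mathcal T}_2(\nu_0,\nu_1) = W_1^2(\nu_0,\nu_1)$ was already observed in the text, so it suffices to compute $W_1$ for the Hamming distance. For $d(x,y)=\1_{x\neq y}$ one has the classical identity $W_1(\nu_0,\nu_1) = \|\nu_0-\nu_1\|_{TV}$ in its coupling form: $\inf_{\pi}\pi(x\neq y) = \frac12\|\nu_0-\nu_1\|_{TV}$, the infimum being attained by the "maximal coupling" that keeps mass $\min(\nu_0(x),\nu_1(x))$ on the diagonal. Writing $\min(\nu_0,\nu_1) = \nu_0 - [\nu_0-\nu_1]_+$ and integrating against $\mu$ then gives $\sqrt{\hat{\mathcal T}_2} = \int[f_0-f_1]_+\,d\mu$; the equality $\int[f_0-f_1]_+\,d\mu = \frac12\int|f_0-f_1|\,d\mu = \frac12\|\nu_0-\nu_1\|_{TV}$ is the standard fact that the positive and negative parts of a mean-zero function have equal integral.

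\textbf{Main obstacle.} The only genuinely delicate point is the construction of the optimal kernel $p$ for the weak transport cost: one must check both that the "leftover" masses on the two sides balance (a one-line computation using that $\nu_0,\nu_1$ are probability measures) and that \emph{any} way of shipping the leftover mass off-diagonal leaves the diagonal entries $p(x,x)$ untouched, so the cost is exactly $\sum_x(1-p(x,x))^2\nu_0(x)$ with the chosen $p(x,x)=\min(1,f_1/f_0)$. Everything else is bookkeeping with the Hamming distance and standard total-variation identities, so I would keep those parts brief.
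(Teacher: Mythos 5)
Your proposal is correct and follows essentially the same route as the paper: the lower bound comes from the diagonal constraint $\pi(x,x)\leq\min(\nu_0(x),\nu_1(x))$, giving $\sum_y d(x,y)p(x,y)\geq\bigl[1-f_1(x)/f_0(x)\bigr]_+$, and equality is achieved by the maximal coupling that keeps $\min(\nu_0,\nu_1)$ on the diagonal (the paper uses the proportional off-diagonal redistribution, but, as you note, any redistribution of the excesses works since the cost depends only on the diagonal entries). Your observation that the two excess measures balance and that any coupling of them is automatically supported off the diagonal closes the only delicate point, so there is no gap.
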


\begin{rem}
Observe that $\Tw_2(\nu_1 |\nu_0 )$ does not depend on $\mu$.
\end{rem}

\begin{proof} For any $\pi \in \Pi(\nu_{0},\nu_{1})$ and any $x\in V$, one has 
$$1- \sum_{y \in V} d(x,y)p(x,y)=\frac{\pi(x,x)}{\nu_0(x)}\leq \frac{\min(\nu_0(x),\nu_1(x))}{\nu_0(x)}=\min\left(\frac{f_1(x)}{f_0(x)},1\right).$$
and therefore 
$$\left[1-\frac{f_1(x)}{f_0(x)} \right]_+\leq \sum_{y \in V} d(x,y)p(x,y).$$
By integrating  with respect to the measure $\nu_0$ and then optimizing over all $\pi \in \Pi(\nu_{0},\nu_{1})$, it follows that 
$$\int \left[f_0-f_1 \right]_+ \,d\mu\leq\sqrt{\hat {\mathcal{T}}_{2}(\nu_0 ,\nu_1 )},$$
and 
$$
 \int \left[1-\frac{f_1}{f_0} \right]_+^2 f_0\,d\mu\leq 
\Tw_2(\nu_1 |\nu_0 ).$$
The equality is reached choosing $\pi^* \in \Pi(\nu_{0},\nu_{1})$ defined by
\begin{align}\label{couplagebis}
\pi^*(x,y) 
& =
\nu_0(x)p^*(x,y) %\nonumber \\
%& 
= 
\1_{x= y}\min(\nu_0(x),\nu_1(x))+\1_{x\neq y}\frac{[\nu_0(x)-\nu_1(x)]_+[\nu_1(y)-\nu_0(y)]_+}{ \sum_{z \in V} [\nu_1(z)-\nu_0(z)]_+},
\end{align}
since 
$\sum_{y \in V} d(x,y)p^*(x,y)=\left[1-\frac{f_1(x)}{f_0(x)} \right]_+.$
\end{proof}

%%%%%%%%%%%%%%%%%%%%%%%%%%%%%%%%%%%%%%%%%%%%%%%%%%%%%%%%%%%%%%%%%%%%%%%%%%%%%%%%%%%%

\subsection{The Knothe-Rosenblatt coupling}\label{sec:knothe}

In this subsection, we recall a general method, due to Kno\-the-Rosenblatt \cite{knothe,rosenblatt}, enabling to construct couplings between probability measures on product spaces.

Consider two graphs $G_1=(V_1,E_1)$ and $G_2=(V_2,E_2)$ and two probability measures 
$\nu_0,\nu_1 \in  \mathcal{P}(V_1 \times V_2)$. The disintegration formulas of $\nu_0, \nu_1$  (recall \eqref{disintegration}) read
\begin{equation} \label{disintegration2}
\nu_0(x_1,x_2)=\nu_0^2(x_2)\nu_0^1(x_1|x_2) \qquad \mbox{and}
\qquad \nu_1(y_1,y_2)=\nu_1^2(y_2)\nu_1^1(y_1|y_2) .
\end{equation}
Let $\pi^2 \in  \mathcal{P}( V_2^2)$ be a coupling 
of  $\nu_0^2$, $\nu_1^2$, and  for all $(x_{2},y_{2}) \in V_{2}^2$ let $\pi^1(\,\cdot\,|x_2,y_2) \in  \mathcal{P}(V_1^2)$ be a coupling of 
$\nu_0^1(\,\cdot\, | x_2)$ and $\nu_1^1(\,\cdot\, | y_2)$, $x_2, y_2 \in V_2$. We are now in a position to define the Knothe-Rosenblatt coupling.

\begin{defi}[Knothe-Rosenblatt coupling]
Let $\nu_0,\nu_1 \in  \mathcal{P}(V_1 \times V_2)$, and consider a family of couplings $\pi^2, \{\pi^1(\,\cdot\,|x_{2},y_{2})\}_{x_{2},y_{2}}$ as above; the coupling $\hat \pi \in \mathcal{P}([V_1 \times V_2]^2)$, defined by
$$
\hat \pi((x_1,x_2),(y_1,y_2)) := \pi^2(x_2,y_2) \pi^1(x_1,y_1|x_2,y_2)\,, \qquad (x_1,x_2),(y_1,y_2) \in V_1 \times V_2 
$$
is called the \emph{Knothe-Rosenblatt coupling} of $\nu_0, \nu_1$ associated with the family of couplings 
$$\left\{\pi^2, \{\pi^1(\, \cdot\, |x_{2}, y_{2}) \}_{x_{2}, y_{2}} \right\}.$$
\end{defi}
It is easy to check that the Knothe-Rosenblatt coupling is indeed a coupling of $\nu_0, \nu_1$.
Note that it is usually required that the couplings $\pi^2,\{\pi^1(\,\cdot\,|x_{2},y_{2})\}_{x_{2},y_{2}}$ are optimal for some weak transport cost, but we will not make this assumption in what follows.

The preceding construction can easily be generalized to products of $n$ graphs. 
Consider $n$ graphs $G_1=(V_1,E_1), \dots, G_n=(V_n,E_n)$, and two probability measures $\nu_0, \nu_1 \in \mathcal{P}(V_1 \times \cdots\times V_n)$ admitting the following disintegration formulas: for all $x=(x_1,\dots,x_n), y=(y_1,\dots,y_n) \in V_1 \times \dots \times V_n$,
\begin{align*}
\nu_0(x)&=\nu_{0}^n(x_n) \nu_{0}^{n-1}(x_{n-1}|x_n)\nu_{0}^{n-2}(x_{n-2}|x_{n-1},x_{n}) \cdots \nu_{0}^{1}(x_1|x_2,\ldots,x_n),\\
\nu_1(y)&=\nu_{1}^n(y_n) \nu_{1}^{n-1}(y_{n-1}|y_n)\nu_{1}^{n-2}(y_{n-2}|y_{n-1},y_{n}) \cdots \nu_{1}^{1}(y_1|y_2,\ldots,y_n).
\end{align*} 
For all $j=1, \ldots, n$, let $\pi^j(\,\cdot\, | x_{j+1},\ldots,x_{n},y_{j+1},\ldots,y_{n}) \in  \mathcal{P}(V_j^2)$ be a coupling
of $\nu_{0}^{j}(\,\cdot\,|x_{j+1},\dots,x_{n})$ and $\nu_{1}^{j}(\,\cdot\,|y_{j+1},\dots,y_{n})$. The Knothe-Rosenblatt coupling $\hat \pi \in  \mathcal{P}([V_1 \times \dots \times V_n]^2)$ between $\nu_{0}$ and $\nu_{1}$ is then defined by
$$
\hat \pi(x,y) = \pi^n(x_n,y_n) \pi^{n-1}(x_{n-1},y_{n-1}|x_n,y_n) \cdots  
 \pi^{1}(x_{1},y_{1}|x_2,\dots,x_n,y_2,\dots,y_n) ,
$$
for all $x=(x_{1},x_{2},\ldots,x_{n})$ and $y=(y_{1},y_{2},\ldots,y_{n}).$

\subsection{Tensorisation}
Another useful property of the weak transport cost defined above is that it tensorises in the following sense.
For $1\le i\le n$, let $G_i=(V_i,E_i)$ be a graph with the associated distance $d_i$.
Given two probability measures $\nu_0, \nu_1$ in $\mathcal{P}(V_1 \times \cdots\times V_{n})$, define
\begin{align*}
\Tw_{2}^{(n)}(\nu_1 |\nu_0 ) 
 := 
\inf_{p \in P(\nu_0,\nu_1)} 
\sum_{x \in V_1 \times\cdots \times V_{n}} \sum_{i=1}^n  \left( \sum_{y \in V_1 \times\cdots \times V_{n}} d_i(x_i,y_i) p(x,y) \right)^2 \nu_0(x)
\end{align*}
where $x=(x_1,\ldots,x_{n}), y=(y_1,\ldots,y_n) \in V_1 \times\cdots \times V_{n}$. 

As above, for any coupling $\pi$ of $\nu_0, \nu_1 \in \mathcal{P}(V_1 \times\cdots \times V_{n})$ we also define
$$
I_2^{(n)}(\pi) := \sum_{x \in V_1 \times\cdots \times V_{n}} \sum_{i=1}^n  \left( \sum_{y \in V_1 \times\cdots \times V_{n}} d_i(x_i,y_i) p(x,y) \right)^2 \nu_0(x)
$$
where $p$ is such that $\pi(x,y) = \nu_0(x)p(x,y)$, for all $x,y \in V_1 \times\cdots \times V_{n}$. Similarly, one defines $\bar{I}^{(n)}_{2}$.

We also define 
$$
J_2^{(n)}(\pi) := \sum_{i=1}^n  \left( \sum_{x,y \in V_1 \times\cdots \times V_{n}} d_i(x_i,y_i) \pi(x,y) \right)^2 
$$
and 
\begin{align*}
\hat {\mathcal{T}}_{2}^{(n)}(\nu_0 ,\nu_1 ) 
 := 
\inf_{\pi \in \Pi(\nu_0,\nu_1)} J_2^{(n)}(\pi).
\end{align*}

Using the notation of Section \ref{sec:knothe} above, we can state the result.

\begin{prop}\label{prop:tensor}
Let $\nu_0, \nu_1$ in $\mathcal{P}(V_1 \times \cdots\times V_n)$; and consider a family of couplings $\pi^n \in \Pi(\nu_{0}^n,\nu_{1}^n)$ and $\pi^k(\,\cdot\,| x_{k+1},\ldots,x_{n}) \in \Pi(\nu_{0}^k(\,\cdot\,| x_{k+1},\ldots,x_{n}) , \nu_{1}^k(\,\cdot\,| y_{k+1},\ldots,y_{n}))$ with $(x_{2},\ldots,x_{n}),(y_{2},\ldots,y_{n}) \in V_{2}\times\cdots\times V_{n}$, as above.
Then,
$$
I_2^{(n)}(\hat \pi) \leq I_2(\pi^n ) 
+ \sum_{k=1}^{n-1}\sum_{x,y \in V_1\times \cdots\times V_{n}} {\hat \pi}(x,y)   I_2(\pi^k(\,\cdot\,|x_{k+1},\ldots,x_{n},y_{k+1}\ldots y_{n})).
$$
where $\hat \pi$ is the Knothe-Rosenblatt coupling of $\nu_0$ and $\nu_1$ associated with the family of couplings above. The same holds for $\bar{I}_{2}^{(n)}$ and $ J_2^{(n)}(\pi)$.
\end{prop}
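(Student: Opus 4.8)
The plan is to exploit the product structure of the Knothe--Rosenblatt coupling, translate everything into probabilistic language, and then conclude by applying the (conditional) Jensen inequality one coordinate at a time. Throughout I write $x_{>k}:=(x_{k+1},\dots,x_n)$, $y_{>k}:=(y_{k+1},\dots,y_n)$, and I abbreviate $\pi^k(\,\cdot\,|x_{>k},y_{>k})$ for the coupling of $\nu_0^k(\,\cdot\,|x_{>k})$ and $\nu_1^k(\,\cdot\,|y_{>k})$ used in the construction. First I would record, from the definition of $\hat\pi$ and the disintegration formula for $\nu_0$, the factorizations
\[
\hat\pi(x,y)=\prod_{k=1}^n \pi^k(x_k,y_k\,|\,x_{>k},y_{>k})=\nu_0(x)\prod_{k=1}^n p^k(x_k,y_k\,|\,x_{>k},y_{>k}),
\]
(with empty conditioning for $k=n$), so that the kernel $\hat p$ attached to $\hat\pi$ and $\nu_0$ satisfies $\hat p(x,y)=\prod_{k=1}^n p^k(x_k,y_k|x_{>k},y_{>k})$.

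Next, fix a direction $i$ and sum $\hat p(x,\cdot)$ over $y_1,\dots,y_{i-1}$: since each $p^k$ with $k<i$ is a probability kernel in its second variable, and the factors with $k\ge i$ do not involve $y_1,\dots,y_{i-1}$, this yields $\sum_{y_1,\dots,y_{i-1}}\hat p(x,y)=\prod_{k\ge i}p^k(x_k,y_k|x_{>k},y_{>k})$, whence
\[
\sum_{y}d_i(x_i,y_i)\,\hat p(x,y)=\sum_{y_i,\dots,y_n}d_i(x_i,y_i)\prod_{k\ge i}p^k(x_k,y_k|x_{>k},y_{>k}).
\]
Here it is cleanest to pass to probability: let $(X,Y)\sim\hat\pi$, realized by first drawing $X\sim\nu_0$ and then successively $Y_n,Y_{n-1},\dots,Y_1$ according to the kernels $p^n,p^{n-1},\dots,p^1$ (this description is faithful to $\hat\pi$ by the factorization above). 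The previous display says exactly $\sum_y d_i(x_i,y_i)\hat p(x,y)=\E[d_i(X_i,Y_i)\mid X=x]$, so that $I_2^{(n)}(\hat\pi)=\sum_{i=1}^n\E\big[(\E[d_i(X_i,Y_i)\mid X])^2\big]$.

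Then I would identify the right-hand side of the claim. Marginalizing $\hat\pi$ as above, the law of $(X_{\ge k},Y_{>k})$ has density $\nu_0^k(x_k|x_{>k})\,\hat\pi_{>k}(x_{>k},y_{>k})$ (where $\hat\pi_{>k}$ is the marginal of $\hat\pi$ on the coordinates of index $>k$), and the conditional law of $(X_k,Y_k)$ given $(X_{>k},Y_{>k})$ is precisely $\pi^k(\,\cdot\,|x_{>k},y_{>k})$. Hence $\E[d_k(X_k,Y_k)\mid X,Y_{>k}]=\sum_{y_k}d_k(x_k,y_k)p^k(x_k,y_k|x_{>k},y_{>k})$ depends only on $(X_{\ge k},Y_{>k})$, and
\[
\sum_{x,y}\hat\pi(x,y)\,I_2\big(\pi^k(\,\cdot\,|x_{>k},y_{>k})\big)=\E\Big[\big(\E[d_k(X_k,Y_k)\mid X,Y_{>k}]\big)^2\Big],\qquad 1\le k\le n-1,
\]
while similarly $I_2(\pi^n)=\E\big[(\E[d_n(X_n,Y_n)\mid X_n])^2\big]=\E\big[(\E[d_n(X_n,Y_n)\mid X])^2\big]$. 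Since $\E[d_k(X_k,Y_k)\mid X]=\E\big[\E[d_k(X_k,Y_k)\mid X,Y_{>k}]\bigm| X\big]$, the conditional Jensen inequality gives $(\E[d_k(X_k,Y_k)\mid X])^2\le \E\big[(\E[d_k(X_k,Y_k)\mid X,Y_{>k}])^2\bigm| X\big]$; taking expectations and summing over $k=1,\dots,n$ (the term $k=n$ being an equality) produces exactly the asserted bound for $I_2^{(n)}(\hat\pi)$.

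Finally, the bound for $\bar I_2^{(n)}$ follows from the very same argument after noting that $\hat\pi$ is also the Knothe--Rosenblatt coupling of $\nu_1$ and $\nu_0$ for the family of couplings with the two marginals swapped; concretely $\hat\pi(x,y)=\nu_1(y)\prod_k\bar p^k(y_k,x_k|x_{>k},y_{>k})$, and one repeats the preceding steps with the roles of $X$ and $Y$ interchanged. The bound for $J_2^{(n)}$ is shorter still: from the marginalization above, $\E[d_k(X_k,Y_k)\mid X_{>k},Y_{>k}]=\sum_{x_k,y_k}d_k(x_k,y_k)\pi^k(x_k,y_k|x_{>k},y_{>k})$, so a single \emph{unconditional} Jensen inequality applied to $\E[d_k(X_k,Y_k)]=\E\big[\E[d_k(X_k,Y_k)\mid X_{>k},Y_{>k}]\big]$, summed over $k$, gives the claim. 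The step I expect to be the main obstacle is purely bookkeeping: carefully verifying that the ``draw $X$, then draw $Y$ coordinate by coordinate from $Y_n$ down to $Y_1$'' description is faithful to $\hat\pi$ and that the partial sums of $\hat\pi$ reproduce exactly the marginal and conditional laws used above; once these identities are pinned down, each inequality in the argument is just Jensen applied in a single coordinate.
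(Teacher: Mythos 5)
Your proof is correct and is essentially the paper's own argument: both rest on the factorization $\hat p(x,y)=\prod_k p^k(x_k,y_k\mid x_{>k},y_{>k})$ followed by a single application of Jensen's inequality in each coordinate (the paper writes this as an explicit inequality between sums, you phrase it as conditional Jensen for $\E[d_k(X_k,Y_k)\mid X]$ versus $\E[d_k(X_k,Y_k)\mid X,Y_{>k}]$, which is the same step). The only addition is that you spell out the $\bar I_2^{(n)}$ and $J_2^{(n)}$ cases, which the paper declares identical and leaves to the reader; your treatment of them is correct.
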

In particular, if the couplings $\pi^n$ and $\pi^k(\,\cdot\,| x_{k+1},\ldots,x_{n})$ are assumed to achieve the infimum in the definition of the weak transport costs between $\nu_{0}^n$ and $\nu_{1}^n$ and between $\nu_{0}^k(\,\cdot\,| x_{k+1},\ldots,x_{n})$ and $\nu_{1}^k(\,\cdot\,| y_{k+1},\ldots,y_{n})$ for all $k\in\{1,\ldots,n-1\}$, we immediately get the following tensorisation inequality for $\Tw_{2}$:
\begin{align}\label{eq:tensor}
\Tw_{2}^{(n)}(\nu_{1}|\nu_{0}) & \leq \Tw_{2}(\nu_{1}^n|\nu_{0}^n)
+ \sum_{k=1}^{n-1}\sum_{\genfrac{}{}{0pt}{}{x,y \in }{V_1\times \cdots\times V_{n}}} {\hat \pi}(x,y)   \Tw_{2}(\nu_{1}^k(\cdot| x_{k+1},\ldots,x_{n}) | \nu_{0}^k(\cdot| y_{k+1},\ldots,y_{n})).
\end{align}
In an obvious way, the same kind of conclusion holds replacing $\Tw_{2}$ by $\hat {\mathcal{T}}_{2}$.
\begin{proof}
In this proof, we will use the following shorthand notation: if $x\in V$ and if $1\leq i\leq j\leq n$, we will denote by $x_{i : j}$ the subvector $(x_{i},x_{i+1},\ldots,x_{j})\in V_i\times \cdots\times V_j.$

Define the kernels $\hat{p}(\,\cdot\,,\,\cdot\,)$, $p^n(\,\cdot\,,\,\cdot\,)$ and $p^{k}(\,\cdot\,,\,\cdot\, | x_{k+1 : n}, y_{k+1:n})$ by the formulas
\begin{align*}
\hat{\pi}(x,y)&= \hat{p}(x,y)\nu_{0}(x)\\
\pi^k(x_{k},y_{k} | x_{k+1:n}, y_{k+1:n})&=p^k(x_{k},y_{k} | x_{k+1:n},y_{k+1:n})\nu_{0}^k(x_{k}|x_{k+1:n}),\quad \forall k<n,\\
\pi^n(x_{n},y_{n})&=p^n(x_{n},y_{n})\nu_{0}^n(x_{n}).
\end{align*}
By the definition of the Knothe-Rosenblatt coupling $\hat{\pi}$, it holds
$$\hat{p}(x,y) = \prod_{k=1}^{n-1} p^k(x_{k},y_{k} |  x_{k+1:n},y_{k+1:n})\times p^n(x_{n},y_{n}) .$$
As a result,
\begin{align*}
& \left( \sum_{y } d_i(x_i,y_i)  \hat{p}(x,y) \right)^2
 =\left( \sum_{y_{i:n}} d_i(x_i,y_i)  \prod_{k=i}^{n-1} p^k(x_{k},y_{k} |  x_{k+1:n},y_{k+1:n})p^n(x_{n},y_{n})\right)^2\\
& \qquad \qquad \leq \sum_{y_{i+1:n}} \prod_{k=i+1}^{n-1} p^k(x_{k},y_{k} | x_{k+1:n}, y_{k+1:n})p^n(x_{n},y_{n}) \left(\sum_{y_{i}} d_i(x_i,y_i)  p^i(x_{i},y_{i} | x_{i+1:n},y_{i+1:n})\right)^2
\end{align*}
where the  inequality comes from Jensen's inequality.
Therefore,
\begin{align*}
&\sum_{x} \left( \sum_{y} d_i(x_i,y_i) \hat{p}(x,y) \right)^2\nu_{0}(x) \\
& \leq \sum_{x_{i+1:n}}\sum_{y_{i+1:n}}  \prod_{k=i+1}^{n-1} \pi^k(x_{k},y_{k} | x_{k+1:n},y_{k+1:n})\pi^n(x_{n},y_{n}) 
\sum_{x_{i}}\nu_{0}^{i}(x_{i}|x_{i+1:n})\left(\sum_{y_{i}} d_i(x_i,y_i)  p^i(x_{i},y_{i} |x_{i+1:n},y_{i+1:n})\right)^2\\
& \ =  \sum_{x_{i+1:n}}\sum_{y_{i+1:n}}  \prod_{k=i+1}^{n-1} \pi^k(x_{k},y_{k} | x_{k+1:n},y_{k+1:n})\pi^n(x_{n},y_{n}) I_{2}(\pi^{i}(\,\cdot\,|x_{i+1:n},y_{i+1:n})) \\
&  = \sum_{x,y} \hat{\pi}(x,y)I_{2}(\pi^{i}(\,\cdot\,|x_{i+1:n},y_{i+1:n})).
\end{align*}
Similarly
$$\sum_{x} \left( \sum_{y} d_n(x_n,y_n) \hat{p}(x,y) \right)^2\nu_{0}(x)\leq \sum_{x,y} \hat{\pi}(x,y)I_{2}(\pi^{n}).$$
Summing all these inequalities gives the announced tensorisation formula.

The proof for $\bar{I}_{2}^{(n)}$ and $J_2^{(n)}$ is identical and left to the reader.
\end{proof}

%As an illustration, we apply Proposition \ref{prop:tensor} to the specific example of the $n$-dimensional hypercube $\Omega_n$. On $\Omega_n$, the weak transport cost reads 
%\begin{equation} \label{weakhyper}
%\Tw_2^{(n)}(\nu_0|\nu_1) = \inf_{p \in P(\nu_0,\nu_1)} \sum_{x \in \Omega_n} \sum_{i=1}^n \left( \sum_{y \in \Omega_n} \1_{x_i \neq y_i} p(x,y) \right)^2 \nu_0(x) .
%\end{equation}
%Now if $\gamma = \frac{1}{2} \delta_0 + \frac{1}{2} \delta_1$ and $\nu_0=\gamma^n=\gamma \otimes \dots \otimes \gamma$
%is the $n$-fold product of $\gamma$ (\textit{i.e.}\ the uniform measure on $\Omega_n$), one has, thanks to Proposition \ref{prop:tensor},
%for all $i \in \{1,\dots,n\}$, (setting $\nu=\nu_1$ for simplicity)
%\begin{align*}
%\Tw_{2}^{(n)}(\gamma^n|\nu)
%&\leq 
%\Tw_{2}(\gamma|\nu_{i})+\sum_{x_i \in \{0 , 1\}} \Tw_2^{(n-1)}(\gamma^{n-1} |\nu(x_i,\cdot))\nu_{i}(x_i)\\
%&=  
%\Tw_{2}(\gamma|\nu_{i})+\nu_{i}(0) \Tw_2^{(n-1)}(\gamma^{n-1}|\nu(0,\cdot))+ \nu_{i}(1) \Tw_2^{(n-1)}(\gamma^{n-1}|\nu(1,\cdot))
%\end{align*}
%where  $\nu_i$ is the $i$-th marginal of $\nu$, \textit{i.e.}\ $\nu_i(x_i)=\nu(\{0,1\}^{i-1},x_i,\{0,1\}^{n-i-1})$,
%$x_i \in \{0,1\}$.

%%%%%%%%%%%%%%%%%%%%%%%%%%%%%%%%%%%%%%%%%%%%%%%%%%%%%%%%%%%%%%%%%%%%%%%%%%%%%%%%%%%%
%%%%%%%%%%%%%%%%%%%%%%%%%%%%%%%%%%%%%%%%%%%%%%%%%%%%%%%%%%%%%%%%%%%%%%%%%%%%%%%%%%%%
%%%%%%%%%%%%%%%%%%%%%%%%%%%%%%%%%%%%%%%%%%%%%%%%%%%%%%%%%%%%%%%%%%%%%%%%%%%%%%%%%%%%

\section{Displacement convexity property of the entropy.} \label{sec:Section DC}

Using the weak transport cost defined in the previous section, we can now derive a displacement convexity property of the entropy on graphs. 
More precisely, we will derive such a property for the complete graph. Then we will prove that  our definition of $\nu_t^\pi$ allows the displacement convexity to tensorise. As a consequence, we will be able to derive such a property on the $n$-dimensional hypercube.
%$\Omega_n$. 

\subsection{The complete graph}

Consider the complete graph $K_n$, or equivalently any graph $G$ equipped with the Hamming distance $d(x,y)=\1_{x \neq y}$ (in the definition of the weak transport cost). Recall the definition of $\nu_t^\pi$ given in \eqref{atlantabis}, and
that we proved, in Section \ref{sec:complete}, that $\nu_t^\pi=(1-t)\nu_0+t\nu_1$ for any choice of coupling $\pi$.
Then, the following holds.

\begin{prop}[Displacement convexity on the complete graph]\label{convexity}
Let $\nu_0$ ,$\nu_1$, $\mu \in \mathcal{P}(K_n)$ be three probability measures. Assume that $\nu_0,\nu_1$ are absolutely continuous with respect to $\mu$. Then
$$
H(\nu_t|\mu)\leq (1-t)H(\nu_0|\mu)+tH(\nu_1|\mu)-\frac{t(1-t)}{2}\left(\Tw_2(\nu_1|\nu_0)+\Tw_2(\nu_0|\nu_1)\right),\quad \forall t\in [0,1],
$$
where $\nu_t=(1-t)\nu_0+t\nu_1$.
\end{prop}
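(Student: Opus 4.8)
The plan is to reduce everything to the two-point space computation already carried out in Section~\ref{sec:two-point} via a tensorization-type argument over the mass distribution, or more directly to establish a pointwise (in $t$) convexity estimate by bounding $\frac{\partial^2}{\partial t^2}H(\nu_t|\mu)$ from below. Since $\nu_t=(1-t)\nu_0+t\nu_1$ is an affine path, writing $g(x) := \nu_1(x)-\nu_0(x)$ we have $\nu_t(x)=\nu_0(x)+tg(x)$, and a direct computation gives
\[
\frac{\partial^2}{\partial t^2}H(\nu_t|\mu) = \sum_{x\in K_n}\frac{g(x)^2}{\nu_0(x)+tg(x)} = \sum_{x\in K_n}\frac{g(x)^2}{\nu_t(x)}\,\ge\, 0,
\]
so the entropy is at least convex along the path. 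To get the quantitative deficit matching $\tfrac12(\Tw_2(\nu_1|\nu_0)+\Tw_2(\nu_0|\nu_1))$, I would integrate this second-derivative identity twice: the standard fact that for a $C^2$ function $\varphi$ on $[0,1]$ one has $\varphi(t)\le (1-t)\varphi(0)+t\varphi(1)-\tfrac12 t(1-t)\inf_{[0,1]}\varphi''$ reduces the task to lower-bounding $\sum_x g(x)^2/\nu_t(x)$ uniformly in $t$.

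Next I would identify that uniform lower bound with the weak transport costs. By Lemma~\ref{lem:postive}, with $f_0,f_1$ the densities of $\nu_0,\nu_1$ w.r.t.\ $\mu$ and $d$ the Hamming distance, $\Tw_2(\nu_1|\nu_0)=\int [1-f_1/f_0]_+^2 f_0\,d\mu = \sum_x \frac{[\nu_0(x)-\nu_1(x)]_+^2}{\nu_0(x)}$ and symmetrically $\Tw_2(\nu_0|\nu_1)=\sum_x\frac{[\nu_1(x)-\nu_0(x)]_+^2}{\nu_1(x)}$. Splitting $K_n$ into $A=\{x:\nu_1(x)\le\nu_0(x)\}$ and $B=\{x:\nu_1(x)>\nu_0(x)\}$: on $A$ one has $\nu_t(x)=\nu_0(x)+tg(x)\le\nu_0(x)$ so $g(x)^2/\nu_t(x)\ge g(x)^2/\nu_0(x)$, and on $B$ one has $\nu_t(x)=\nu_1(x)+(1-t)(\nu_0(x)-\nu_1(x))\le \nu_1(x)$ so $g(x)^2/\nu_t(x)\ge g(x)^2/\nu_1(x)$. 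Summing,
\[
\sum_{x}\frac{g(x)^2}{\nu_t(x)} \;\ge\; \sum_{x\in A}\frac{[\nu_0(x)-\nu_1(x)]^2}{\nu_0(x)} + \sum_{x\in B}\frac{[\nu_1(x)-\nu_0(x)]^2}{\nu_1(x)} \;=\; \Tw_2(\nu_1|\nu_0)+\Tw_2(\nu_0|\nu_1),
\]
uniformly in $t\in[0,1]$. Plugging this into the convexity inequality from the previous paragraph yields exactly the claimed bound with constant $\tfrac12$.

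Two technical points need care rather than being genuinely hard. First, the formula $\frac{\partial^2}{\partial t^2}H(\nu_t|\mu)=\sum_x g(x)^2/\nu_t(x)$ is only valid where $\nu_t>0$; if $\nu_0(x)=0$ for some $x$ (allowed since only $\nu_0\ll\mu$ is assumed, not $\nu_0$ everywhere positive) then on the set where $\nu_t(x)=0$ for an interval of $t$ the term is absent and $g(x)=0$ there, while at an endpoint where $\nu_0(x)=0<\nu_1(x)$ the derivative blows up but the function stays finite — so I would either first prove the inequality assuming $\nu_0,\nu_1>0$ everywhere and then pass to the limit by approximating $\nu_0,\nu_1$ with $(1-\e)\nu_i+\e\mu$ and using continuity of all terms, or argue directly with one-sided derivatives. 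Second, one should check the elementary real-variable lemma $\varphi(t)\le(1-t)\varphi(0)+t\varphi(1)-\tfrac12 t(1-t)m$ when $\varphi''\ge m$ on $[0,1]$: this follows by applying the ordinary convexity/Taylor argument to $\psi(t):=\varphi(t)+\tfrac{m}{2}t(1-t)$, which satisfies $\psi''\ge 0$ hence $\psi(t)\le(1-t)\psi(0)+t\psi(1)$. The main obstacle, such as it is, is the boundary/degeneracy bookkeeping in the first point; the algebraic heart of the argument is the two-region comparison $\nu_t\le\max(\nu_0,\nu_1)$ coordinatewise on the respective regions, which is what makes the constant $\tfrac12$ come out cleanly.
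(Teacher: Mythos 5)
Your proof is correct and follows essentially the same route as the paper's: both compute $\frac{\partial^2}{\partial t^2}H(\nu_t|\mu)=\int\frac{(f_0-f_1)^2}{(1-t)f_0+tf_1}\,d\mu$, split the numerator into the two positive parts, bound the denominator by $f_0$ (resp.\ $f_1$) on the corresponding region, identify the result with $\Tw_2(\nu_1|\nu_0)+\Tw_2(\nu_0|\nu_1)$ via Lemma~\ref{lem:postive}, and conclude by the standard convexity argument. Your additional care about vertices where $\nu_0$ or $\nu_1$ vanishes is a reasonable refinement of a point the paper passes over silently, but it does not change the argument.
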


\begin{proof}
Our aim is simply to bound from below the second order derivative of $t \mapsto F(t):=H(\nu_t|\mu)$.
Denote by $f_0$ and $f_1$ the respective densities of $\nu_0$ and $\nu_1$ with respect to $\mu$. We have
$$
F(t)=\int \log\left((1-t)f_0+tf_1\right)\left((1-t)f_0+t f_1\right)\,d\mu.
$$
Thus $F'(t)=\int \log\left((1-t)f_0+tf_1\right)\,d(\nu_0-\nu_1)$. In turn,
\begin{align*}
F''(t)
&=
\int \frac{(f_0-f_1)^2}{(1-t)f_0+tf_1}\,d\mu 
=
\int \frac{[f_0-f_1]_+^2}{(1-t)f_0+tf_1}\,d\mu + \int \frac{[f_1-f_0]_+^2}{(1-t)f_0+tf_1}\,d\mu\\
&\geq 
\int \frac{[f_0-f_1]_+^2}{f_0}\,d\mu + \int \frac{[f_1-f_0]_+^2}{f_1}\,d\mu 
 =
\int \left[1-\frac{f_1}{f_0}\right]_+^2 f_0\,d\mu + \int \left[1-\frac{f_0}{f_1}\right]_+^2f_1\,d\mu \\
 & =
\Tw_2(\nu_1|\nu_0)+\Tw_2(\nu_0|\nu_1),
\end{align*}
where, in the last line, we used Lemma \ref{lem:postive}.
As a consequence, the function 
$
G \colon t\mapsto F(t)-\frac{t^2}{2}\left(\Tw_2(\nu_1|\nu_0)+\Tw_2(\nu_0|\nu_1)\right)
$ 
is convex on $[0,1],$ so that $G(t)\leq (1-t)G(0)+tG(1)$ which gives precisely, after some algebra,  the desired inequality.
\end{proof}

\begin{rem}[Pinsker inequality]
As an immediate consequence of the previous proposition, we will derive Csiszar-Kullback-Pinsker inequality (\cite{pinsker,kullback,csiszar}).
Recall the notation of the proof of Proposition \ref{convexity}.
Applying Cauchy-Schwarz yields
\begin{align*}
F''(t)&=\int \left(\frac{|f_0-f_1|}{\sqrt{(1-t)f_0+tf_1}}\right)^2\,d\mu\int \left(\sqrt{(1-t)f_0+tf_1}\right)^2\,d\mu
\geq \left(\int |f_0-f_1|\,d\mu\right)^2
=\|\nu_0-\nu_1\|_{TV}^2.
\end{align*}
Hence the map $G:t\mapsto F(t)-\frac{t^2}{2}\|\nu_0-\nu_1\|_{TV}^2$ is convex on $[0,1]$ so that
\begin{equation} \label{eq:csiszar}
H(\nu_t|\mu)\leq (1-t)H(\nu_0|\mu)+tH(\nu_1|\mu)-\frac{t(1-t)}{2}\|\nu_0-\nu_1\|_{TV}^2,\qquad \forall t\in [0,1] .
\end{equation}
Inequality \eqref{eq:csiszar} is a reinforcement of the well known Csiszar-Kullback-Pinsker's inequality (see e.g. \cite[Theorem 8.2.7]{ane}) which asserts that 
$$
\|\nu_0-\nu_1\|_{TV}^2 \leq 2 H(\nu_1|\nu_0) .
$$
%%PT -- I edited the sentence and moved it to after the equation. Also t goes to 0, not to 1.
%%
Indeed, take $\mu=\nu_0$ together with the fact that $H(\nu_t|\mu) \geq 0$, and then take the limit $t \to 0$ in \eqref{eq:csiszar} to obtain the above inequality.

Csiszar-Kullback-Pinsker's inequality, and its generalizations, are known to have many applications in Probability theory, Analysis and Information theory, see \cite[Page 636]{villani} for a review.

Now we compare the displacement convexity property of Proposition \ref{convexity} with \eqref{eq:csiszar}. For the two-point space it is easy to check that  the ratio 
$$
\frac{\Tw_2(\nu_1|\nu_0)+\Tw_2(\nu_0|\nu_1)}{\|\nu_0-\nu_1\|_{TV}^2}
$$
is not uniformly bounded above over all probability measures $\nu_0$ and $\nu_1$. On the other hand, we claim that
\begin{eqnarray}\label{bonrepas}
\frac{\Tw_2(\nu_1|\nu_0)+\Tw_2(\nu_0|\nu_1)}{\|\nu_0-\nu_1\|_{TV}^2}  \geq \frac{1}{2}\,, \qquad \forall \nu_0, \nu_1
\end{eqnarray}
%with $1/2$ the optimal constant, 
which implies that the result in Proposition~\ref{convexity} is 
%(strictly) 
stronger than \eqref{eq:csiszar}, up to a constant 2. %Notice however that we cannot exactly recover \eqref{eq:csiszar} since $1/2$ is optimal.
%%PT : I added the following sentence
%%
We also provide an example below which shows that  we cannot exactly recover \eqref{eq:csiszar} using Proposition~\ref{convexity}.

Let us prove the claim, and more precisely that the following holds
\begin{equation}\label{near-tightness}
\Tw_2(\nu_1|\nu_0)+\Tw_2(\nu_0|\nu_1) \geq \frac{\|\nu_0-\nu_1\|_{TV}^2}{1+\frac{\|\nu_0-\nu_1\|_{TV}}{2}} \geq\frac 12\|\nu_0-\nu_1\|_{TV}^2.
\end{equation}
This is a consequence of Cauchy-Schwarz inequality, namely, we have
$$
\Tw_2(\nu_1|\nu_0)+\Tw_2(\nu_0|\nu_1) 
\geq 
\frac{\left(\int [f_1-f_0]_+ d\mu\right)^2}{\nu_1(f_1\geq f_0)}+\frac{\left(\int [f_0-f_1]_+ d\mu\right)^2}{\nu_0(f_0> f_1)}.
$$
Since  $ \|\nu_0-\nu_1\|_{TV}= 2\int [f_1-f_0]_+ d\mu= 2(\nu_1(f_1\geq f_0)-\nu_0(f_1\geq f_0))$, we get 
$$
\Tw_2(\nu_1|\nu_0)+\Tw_2(\nu_0|\nu_1)
\geq  
\inf_{u\in [0,1]}\frac{(1+\frac{\|\nu_0-\nu_1\|_{TV}}{2})\|\nu_0-\nu_1\|_{TV}^2}{4u(1+\frac{\|\nu_0-\nu_1\|_{TV}}{2}-u)}  
= 
\frac{\|\nu_0-\nu_1\|_{TV}^2}{1+\frac{\|\nu_0-\nu_1\|_{TV}}{2}} .
$$
We now give the example that achieves {\em equality} in the first inequality of \eqref{near-tightness}, thus confirming that Proposition~\ref{convexity} can not exactly recover \eqref{eq:csiszar} :
%It remains to prove that $1/2$ is optimal. To that purpose,
Let $\nu_0$ and $\nu_1$ be two probability measures on the two-point space $\{0,1\}$ defined by  $\nu_1(1)=\nu_0(0)=3/4$ and $\nu_1(0)=\nu_0(1)=1/4$. Then 
$$
 \|\nu_0-\nu_1\|_{TV}=2(\nu_1(1)-\nu_0(1))=1,
 $$
 and 
 $$
 \Tw_2(\nu_1|\nu_0)+\Tw_2(\nu_0|\nu_1)= \frac{(\nu_1(1)-\nu_0(1))^2}{\nu_1(1)}+\frac{(\nu_0(0)-\nu_1(0))^2}{\nu_0(0)}=2/3,
 $$
 which gives the (claimed) equality in \eqref{near-tightness}.
\end{rem}

%%%%%%%%%%%%%%%%%%%%%%%%%%%%%%%%%%%%%%%%%%%%%%%%%%%%%%%%%%%%%%%%%%%%%%%%%%%%%%%%%%%%

\subsection{Tensorisation of the displacement convexity property}

In this section we prove that if the displacement convexity property of the entropy holds on $n$ graphs $G_1=(V_1,E_1)$, \ldots, $G_n=(V_n,E_n)$, equipped with probability measures $\mu_1,\ldots,\mu_n$ and graph distances $d_1,\ldots,d_n$ respectively, then the displacement convexity of the entropy holds on their Cartesian product equipped with $\mu_1\otimes\cdots\otimes\mu_n$ with respect to the tensorised transport costs $I_2^{(n)}$ and $\bar{I}_2^{(n)}$. As an application we shall apply such a property to the specific example of the hypercube at the end of the section. 

The next theorem is one of our main results.
\begin{thm} \label{th:main}
Let $(\mu^1,\ldots,\mu^n) \in \mathcal{P}(V_1) \times\cdots\times \mathcal{P}(V_n)$.
Assume that for all $i\in \{1,\ldots,n\}$ there is a constant $C_i\geq 0$ such that for all $\nu_0,\nu_1 \in \mathcal{P}(V_i)$ there exists $\pi = \pi^i\in \Pi(\nu_0,\nu_1)$ such that for all $t\in [0,1]$ it holds that:
$$H(\nu_t^\pi | \mu^i) \leq (1-t)H(\nu_0 | \mu^{i}) + tH(\nu_1 | \mu^{i}) - C_it(1-t)(I_2(\pi)+\bar{I}_2(\pi)).$$
Then the product probability measure $\mu=\mu^1\otimes\cdots\otimes\mu^n$ defined on $G=(V,E)=G_1\boxempty\cdots\boxempty G_n$ verifies the following property: 
for all $\nu_0,\nu_1 \in \mathcal{P}(V)$ there exists $\pi = \pi^{(n)}\in \Pi(\nu_0,\nu_1)$ such that for all $t\in [0,1]$ it holds that:
$$H(\nu_t^\pi | \mu) \leq (1-t)H(\nu_0 | \mu) + tH(\nu_1 | \mu) - Ct(1-t)(I_2^{(n)}(\pi)+\bar{I}^{(n)}_2(\pi)),$$
where $C=\min_i C_i.$
The same proposition holds replacing $I_2(\pi)+\bar{I}_2(\pi)$ by $J_2(\pi)$ and $I_2^{(n)}(\pi)+\bar{I}^{(n)}_2(\pi)$ by $J_2^{(n)}(\pi)$.
\end{thm}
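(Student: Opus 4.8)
The plan is to prove the theorem by induction on $n$, with the real content being the case $n=2$; the general case follows by writing $G_1\boxempty\cdots\boxempty G_n = (G_1\boxempty\cdots\boxempty G_{n-1})\boxempty G_n$ and applying the two-factor statement together with the induction hypothesis (noting that $\min(\min_{i\le n-1}C_i, C_n)=\min_i C_i$, and that the tensorised cost $I_2^{(n)}$ is built precisely so that it decomposes as the $(n-1)$-fold cost on the first block plus the one-factor cost on the last, conditionally). So from now on I focus on $n=2$: we are given $\mu^1$ on $V_1$ and $\mu^2$ on $V_2$ with constants $C_1,C_2$, we set $\mu=\mu^1\otimes\mu^2$, $C=\min(C_1,C_2)$, and we must produce, for arbitrary $\nu_0,\nu_1\in\mathcal P(V_1\times V_2)$, a coupling $\pi$ achieving the displacement convexity inequality with cost $I_2^{(2)}(\pi)+\bar I_2^{(2)}(\pi)$.

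First I would build the coupling as a Knothe--Rosenblatt coupling in the sense of Section~\ref{sec:knothe}. Disintegrate $\nu_0(x_1,x_2)=\nu_0^2(x_2)\nu_0^1(x_1|x_2)$ and $\nu_1(y_1,y_2)=\nu_1^2(y_2)\nu_1^1(y_1|y_2)$. Apply the hypothesis on $G_2$ to $(\nu_0^2,\nu_1^2)$ to get $\pi^2\in\Pi(\nu_0^2,\nu_1^2)$; and for each pair $(x_2,y_2)$, apply the hypothesis on $G_1$ to $(\nu_0^1(\cdot|x_2),\nu_1^1(\cdot|y_2))$ to get $\pi^1(\cdot|x_2,y_2)\in\Pi(\nu_0^1(\cdot|x_2),\nu_1^1(\cdot|y_2))$. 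Set $\hat\pi((x_1,x_2),(y_1,y_2))=\pi^2(x_2,y_2)\pi^1(x_1,y_1|x_2,y_2)$. The key structural fact I would then establish — and this is where Lemma~\ref{tensor} enters — is the factorisation of the interpolating path: because $\nu_t^{x,y}=\nu_t^{x_1,y_1}\otimes\nu_t^{x_2,y_2}$ for Dirac endpoints, summing against $\hat\pi$ gives
\begin{equation*}
\nu_t^{\hat\pi}(z_1,z_2)=\sum_{x_2,y_2}\pi^2(x_2,y_2)\,\nu_t^{\pi^2}\text{-conditional structure}\,,
\end{equation*}
more precisely $\nu_t^{\hat\pi}$ disintegrates over $V_2$ with second marginal $\nu_t^{\pi^2}$ (the interpolation built from $\pi^2$ between $\nu_0^2$ and $\nu_1^2$) and conditional laws that are convex mixtures of the one-dimensional interpolations $\nu_t^{\pi^1(\cdot|x_2,y_2)}$. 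Then the additivity of entropy under disintegration, $H(\nu_t^{\hat\pi}|\mu)=H((\nu_t^{\hat\pi})^2|\mu^2)+\sum_{z_2}(\nu_t^{\hat\pi})^2(z_2)\,H((\nu_t^{\hat\pi})^1(\cdot|z_2)\mid\mu^1)$, lets me attack the two pieces separately: the first by the $G_2$ hypothesis applied to $\pi^2$, the second — after using convexity of entropy in its first argument to pass the outer $t$-dependent mixture inside, or more carefully tracking which measure sits where — by the $G_1$ hypothesis applied to each $\pi^1(\cdot|x_2,y_2)$, then averaging.

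The arithmetic that ties it together is: the linear-in-$t$ terms $(1-t)H(\nu_0|\mu)+tH(\nu_1|\mu)$ reassemble from the corresponding one-dimensional linear terms via the same disintegration identity applied at $t=0$ and $t=1$; the curvature terms produce $-C_2t(1-t)(I_2(\pi^2)+\bar I_2(\pi^2))$ from the second factor and $-C_1t(1-t)\sum_{x_2,y_2}\hat\pi\text{-weight}\cdot(I_2(\pi^1(\cdot|x_2,y_2))+\bar I_2(\cdots))$ from the first; bounding both constants below by $C=\min(C_1,C_2)$ and invoking Proposition~\ref{prop:tensor} — which says exactly $I_2^{(2)}(\hat\pi)\le I_2(\pi^2)+\sum\hat\pi\text{-weight}\cdot I_2(\pi^1(\cdot|x_2,y_2))$ and likewise for $\bar I_2$ — collapses the sum of curvature contributions into $-Ct(1-t)(I_2^{(2)}(\hat\pi)+\bar I_2^{(2)}(\hat\pi))$. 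The variant with $J_2$ replacing $I_2+\bar I_2$ runs verbatim, using the $J_2$-part of Proposition~\ref{prop:tensor}.

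The main obstacle I anticipate is the bookkeeping in the entropy disintegration step: one must be careful that in $H((\nu_t^{\hat\pi})^1(\cdot|z_2)\mid\mu^1)$ the conditional law of $\nu_t^{\hat\pi}$ at fixed $z_2$ is genuinely of the form handled by the $G_1$-hypothesis — it is a $\pi^2$-and-binomial-weighted mixture of the $\nu_t^{\pi^1(\cdot|x_2,y_2)}$ over those $(x_2,y_2)$ whose interpolation passes through $z_2$ — and that the weights are nonnegative and sum correctly against $(\nu_t^{\hat\pi})^2(z_2)$. Establishing that the hypothesis, which is stated for a path between two fixed measures, still applies after this averaging requires either invoking convexity of $H(\cdot|\mu^1)$ to dominate the mixed entropy by the mixture of entropies (so that each summand is controlled by the one-dimensional inequality with its own $\pi^1(\cdot|x_2,y_2)$), or observing that the mixture is itself exactly $\nu_t^{\pi}$ for a suitable single coupling $\pi$ on $V_1^2$ — and checking that this $\pi$ is the one whose $I_2$ appears in Proposition~\ref{prop:tensor}. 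Getting this identification exactly right, rather than merely up to an inequality in the wrong direction, is the delicate point; everything else is linear-algebra assembly already packaged in Lemma~\ref{tensor} and Proposition~\ref{prop:tensor}.
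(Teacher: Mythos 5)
Your proposal is correct and follows essentially the same route as the paper: the Knothe--Rosenblatt coupling built from the one-dimensional couplings supplied by the hypothesis, the factorisation of $\nu_t^{x,y}$ from Lemma~\ref{tensor} to identify the conditional laws of $\nu_t^{\hat\pi}$ as convex mixtures of the one-dimensional interpolations, the disintegration formula for relative entropy plus convexity of $H(\cdot|\mu)$ to reduce to the factor inequalities, and Proposition~\ref{prop:tensor} to collapse the curvature terms into $I_2^{(n)}(\hat\pi)+\bar I_2^{(n)}(\hat\pi)$. The only difference is presentational --- the paper runs the $n$-fold disintegration directly rather than inducting on $n$ --- and your induction is sound provided (as you indicate) the two-factor step is stated with the tensorised cost $I_2^{(n-1)}$ on the first block rather than the single-distance cost $I_2$, since the latter dominates the former and the literal two-factor statement would otherwise not be fed by the induction hypothesis.
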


\begin{proof}
In this proof, we use the notation and definitions introduced in Section \ref{sec:knothe}.
Fix $\nu_0,\nu_1 \in  \mathcal{P}(V)$ and write the following disintegration formulas 
\begin{align*}
\nu_0(x)&=\nu_0^n(x_n)\prod_{i=1}^{n-1} \nu_0^k(x_k | x_{k+1 : n}),\qquad \forall x=(x_1,\ldots,x_n)\in V\\
\nu_1(y)&=\nu_1^n(y_n)\prod_{i=1}^{n-1} \nu_1^k(y_k | y_{k+1 : n}),\qquad \forall y=(y_1,\ldots,y_n)\in V,
\end{align*}
where we recall that $x_{k+1:n}=(x_{k+1},\ldots,x_{n})\in V_{k+1}\times \cdots\times V_{n}.$

By assumption, for every $x,y \in V$, there are couplings $\pi^n\in \mathcal{P}(V_n\times V_n)$ and $\pi^k(\,\cdot\, | x_{k+1:n},y_{k+1:n}) \in \mathcal{P}(V_k\times V_k)$ such that 
$$\pi^n\in\Pi(\nu_0^n,\nu_1^n)\quad\text{and}\quad \pi^k(\,\cdot\, | x_{k+1:n},y_{k+1:n})\in\Pi(\nu_0^k(\,\cdot\, |x_{k+1:n}),\nu_1^k(\,\cdot\, |y_{k+1:n})),$$
and for which the following inequalities hold
\begin{align*}
H(\nu_t^{n}|\mu^n)
&\leq 
(1-t)H(\nu_0^n | \mu^n) + tH(\nu_1^n | \mu^n) - C_nt(1-t)J_2(\pi^n),\\
H(\nu_t^{k, x_{k+1:n},y_{k+1:n}}|\mu^k)
&\leq (1-t)H(\nu_0^k(\,\cdot\, |x_{k+1:n}) | \mu^k) + tH(\nu_1^k(\,\cdot\, | y_{k+1:n}) | \mu^k) \\
&\quad
- C_kt(1-t)R_2(\pi^k(\,\cdot\,|x_{k+1:n}, y_{k+1:n})),
\end{align*}
where $R_2:=I_2+\bar{I}_2$, $\nu_t^{n}:=\nu_t^{\pi_n}$, and $\nu_t^{k, x_{k+1:n},y_{k+1:n}}=\nu_t^{\pi^k(\,\cdot\, | x_{k+1:n},y_{k+1:n})}.$

Now, consider the Knothe-Rosenblatt coupling $\hat{\pi}\in \Pi(\nu_0,\nu_1)$ constructed from the couplings $\pi^n$ and $\pi^k(\,\cdot\,| x_{k+1:n}, y_{k+1:n}),$ $x,y\in V$ and denote by $\gamma_t$ the path $\nu_t^{\hat{\pi}}\in \mathcal{P}(V)$ connecting $\nu_0$ to $\nu_1.$

Let us consider the disintegration of $\gamma_t$ with respect to its marginals:
$$\gamma_t(z)=\gamma_t^n(z_n)\gamma_t^{n-1}(z_{n-1}|z_n)\cdots \gamma_t^{1}(z_1|z_2,\ldots,z_n).$$
We claim that there exist non-negative coefficients $\alpha_{t}^{k}(x_{k+1:n},y_{k+1:n},z_{k+1:n})$ such that $$\sum_{x_{k+1:n},y_{k+1:n}}\alpha_{t}^k(x_{k+1:n},y_{k+1:n},z_{k+1:n})=1$$ and such that for all $k\in \{1,\ldots,n-1\}$ it holds
$$\gamma_{t}^k(\,\cdot\,|z_{k+1:n}) = \sum_{x_{k+1:n}, y_{k+1:n}} \nu_{t}^{k,x_{k+1:n}, y_{k+1:n}}(\,\cdot\,)\alpha_{t}^{k}(x_{k+1:n},y_{k+1:n},z_{k+1:n}).$$
Indeed, by definition and using the tensorisation property of $\nu_t^{x,y}$ given in Lemma \ref{tensor}, it holds
\begin{align*}
\gamma_t(z)&=\sum_{x,y\in V} \nu_t^{x,y}(z)\hat{\pi}(x,y).
\end{align*}
So, using the fact that, according to Lemma \ref{tensor},  $\nu_t^{x,y}(z)=\prod_{i=1}^n\nu_t^{x_i,y_i}(z_i)$, we see that
\begin{align*}
\sum_{u \in V : u_{k:n}=z_{k:n}} \gamma_t(u) &= \sum_{x,y\in V}\left(\sum_{u \in V : u_{k:n}=z_{k:n}}\nu_t^{x,y}(u)\right)\hat{\pi}(x,y)
=\sum_{x,y\in V} \prod_{i=k}^n \nu_t^{x_i,y_i}(z_i)\hat{\pi}(x,y)\\
&=\sum_{x_{k:n},y_{k:n}} \prod_{i=k}^n \nu_t^{x_i,y_i}(z_i)\pi^i(x_i,y_i |x_{i+1:n}, y_{i+1:n})\\ 
&=\sum_{x_{k+1:n},y_{k+1:n}} \nu_t^{k, x_{k+1:n}, y_{k+1:n}}(z_k)\prod_{i=k+1}^n \nu_t^{x_i,y_i}(z_i)\pi^i(x_i,y_i |x_{i+1:n}, y_{i+1:n}) .
\end{align*}
From this it follows that
\begin{align*}
\gamma_t^k(z_k | z_{k+1:n})&= \frac{\displaystyle \sum_{u \in V : u_{k:n}=z_{k:n}} \gamma_t(u)}{\displaystyle\sum_{u \in V : u_{k+1:n}=z_{k+1:n}}\gamma_t(u)}
= \frac{\displaystyle\sum_{x_{k+1:n},y_{k+1:n}} \nu_t^{k, x_{k+1:n}, y_{k+1:n}}(z_k)\prod_{i=k+1}^n \nu_t^{x_i,y_i}(z_i)\pi^i(x_i,y_i |x_{i+1:n}, y_{i+1:n})}{\displaystyle\sum_{x_{k+1:n},y_{k+1:n}} \prod_{i=k+1}^n \nu_t^{x_i,y_i}(z_i)\pi^i(x_i,y_i |x_{i+1:n}, y_{i+1:n})}\\
&:= \sum_{x_{k+1:n},y_{k+1:n}} \nu_t^{k, x_{k+1:n}, y_{k+1:n}}(z_k) \alpha_{t}^k(x_{k+1:n}, y_{k+1:n}, z_{k+1:n}),
\end{align*}
using obvious notation, from which the claim follows.
Similarly, for all $z_n\in V_n$, it holds
$\gamma^n_t(z_n)=\nu^n_t(z_n).$

Now, let us recall the well known disintegration formula for the relative entropy: if $\gamma \in \mathcal{P}(V)$ is absolutely continuous with respect to $\mu$, then it holds
\begin{equation}\label{disint-ent}
H(\gamma|\mu) = H(\gamma^n | \mu^n) + \sum_{k=1}^{n-1} \sum_{z\in V}  H(\gamma^{k}(\,\cdot\,| z_{k+1:n}) | \mu^{k}) \gamma(z).
\end{equation}
Applying \eqref{disint-ent} to $\gamma_{t}$, and the (classical) convexity of the relative entropy, it holds
\begin{align*}
H(\gamma_{t}|\mu)
&= 
H(\gamma_{t}^n | \mu^n) + \sum_{k=1}^{n-1} \sum_{z\in V}  H(\gamma_{t}^{k}(\,\cdot\,| z_{k+1:n}) | \mu^{k}) \gamma_{t}(z)\\
&\leq 
H(\nu_{t}^n |\mu^n) +\sum_{k=1}^{n-1} \sum_{z\in V}
\sum_{\genfrac{}{}{0pt}{}{x_{k+1:n},}{ y_{k+1:n}}} 
\alpha_{t}^k(x_{k+1:n},y_{k+1:n},z_{k+1:n}) H(\nu_{t}^{k, x_{k+1:n},y_{k+1:n}} | \mu^{k}) \gamma_{t}(z)
\end{align*}
Now we deal with each term in the sum separately. Fix $k \in \{1,\dots,n-1\}$. We have
\begin{align*}
& \sum_{z\in V}
\sum_{\genfrac{}{}{0pt}{}{x_{k+1:n},}{ y_{k+1:n}}} 
\alpha_{t}^k(x_{k+1:n},y_{k+1:n},z_{k+1:n}) H(\nu_{t}^{k, x_{k+1:n},y_{k+1:n}} | \mu^{k}) \gamma_{t}(z) \\
&= 
\sum_{z_{k+1:n}} \sum_{\genfrac{}{}{0pt}{}{x_{k+1:n},}{ y_{k+1:n}}} \alpha_{t}^k(x_{k+1:n},y_{k+1:n},z_{k+1:n})  
H(\nu_{t}^{k, x_{k+1:n},y_{k+1:n}} | \mu^{k}) 
 \sum_{\genfrac{}{}{0pt}{}{u\in V :}{u_{k+1:n}=z_{k+1:n}}}\gamma_{t}(u)\\
& = 
\sum_{z_{k+1:n}}\sum_{\genfrac{}{}{0pt}{}{x_{k+1:n},}{ y_{k+1:n}}} H(\nu_{t}^{k, x_{k+1:n},y_{k+1:n}} | \mu^{k}) \prod_{i=k+1}^n \nu_{t}^{x_{i},y_{i}}(z_{i})\pi^i(x_{i},y_{i} | x_{i+1:n} y_{i+1:n})\\
& = 
\sum_{\genfrac{}{}{0pt}{}{x_{k+1:n},}{ y_{k+1:n}}} H(\nu_{t}^{k, x_{k+1:n},y_{k+1:n}} | \mu^{k}) \prod_{i=k+1}^n\pi^i(x_{i},y_{i} | x_{i+1:n} y_{i+1:n}) \\
& =
\sum_{x, y} H(\nu_{t}^{k, x_{k+1:n},y_{k+1:n}} | \mu^{k}) \prod_{i=1}^n\pi^i(x_{i},y_{i} | x_{i+1:n} y_{i+1:n})\,.
\end{align*}
Therefore,
\begin{align*}
H(\gamma_{t}|\mu) 
& \leq H(\nu_{t}^n |\mu^n) +\sum_{k=1}^{n-1} \sum_{x, y} H(\nu_{t}^{k, x_{k+1:n},y_{k+1:n}} | \mu^{k}) \hat{\pi}(x,y).
\end{align*}
Now, applying the assumed displacement convexity inequalities, we get
\begin{align*}
H(\gamma_{t}|\mu) &\leq (1-t) \left[H(\nu_{0}^n |\mu^n) +\sum_{k=1}^{n-1} \sum_{x, y} H(\nu_{0}^{k}(\,\cdot\,|x_{k+1:n}) | \mu^{k}) \hat{\pi}(x,y) \right] \\
&\quad+ t\left[H(\nu_{1}^n | \mu^n) +\sum_{k=1}^{n-1} \sum_{x, y} H(\nu_{1}^{k}(\,\cdot\,| y_{k+1:n}) | \mu^{k}) \hat{\pi}(x,y)\right]\\
&\quad - Ct(1-t)\left[J_{2}(\pi^n) + \sum_{k=1}^{n-1} \sum_{x,y}R_{2}(\pi^k(\,\cdot\,| x_{k+1:n}, y_{k+1:n}))\hat{\pi}(x,y)\right]\\
&= (1-t) \left[H(\nu_{0}^n |\mu^n) +\sum_{k=1}^{n-1} \sum_{x} H(\nu_{0}^{k}(\,\cdot\,|x_{k+1:n}) | \mu^{k})\nu_{0}(x) \right] \\
&\quad+ t\left[H(\nu_{1}^n | \mu^n) +\sum_{k=1}^{n-1} \sum_{y} H(\nu_{1}^{k}(\,\cdot\,| y_{k+1:n}) | \mu^{k}) \nu_{1}(y)\right]\\
&\quad - Ct(1-t)\left[J_{2}(\pi^n) + \sum_{k=1}^{n-1}\sum_{x,y} R_{2}(\pi^k(\,\cdot\,| x_{k+1:n}, y_{k+1:n}))\hat{\pi}(x,y)\right]\\
&\leq (1-t)H(\nu_{0}|\mu) + tH(\nu_{1}|\mu) - Ct(1-t) (I_{2}^{(n)}(\hat{\pi})+\bar{I}_{2}^{(n)}(\hat{\pi})),
\end{align*}
where the last inequality follows from the disintegration equality \eqref{disint-ent} for the relative entropy and from the disintegration inequality given in Proposition \ref{prop:tensor}.
\end{proof}

As an application of Theorem \ref{th:main}, we derive the displacement convexity of  entropy property on the hypercube.
%$\Omega_n$.

\begin{cor}[Displacement convexity on the hypercube] \label{cor:dchypercube}
Let $\mu$ be a probability measure on $\{0,1\}$ and define its $n$-fold product $\mu^{\otimes n}$ on $\Omega_n = \{0,1\}^n$.
For any $\nu_0, \nu_1 \in \mathcal{P}(\Omega_n)$, there exists a $\pi \in \Pi(\nu_{0},\nu_{1})$ such that for any $t \in [0,1]$, 
\begin{align} \label{pluie}
H(\nu_t^{\pi}|\mu^{\otimes n}) \leq 
(1-t)H(\nu_0|\mu^{\otimes n})+tH(\nu_1|\mu^{\otimes n}) 
-\frac{t(1-t)}{2}\left(I_2^{(n)}(\pi)+ \bar{I}_{2}^{(n)}(\pi) \right).
\end{align} 
and there exists $\pi \in \Pi(\nu_{0},\nu_{1})$ such that for any $t \in [0,1]$, 
\begin{align} \label{pluies}
H(\nu_t^{\pi}|\mu^{\otimes n}) \leq 
(1-t)H(\nu_0|\mu^{\otimes n})+tH(\nu_1|\mu^{\otimes n}) 
-{2t(1-t)}J_2^{(n)}(\pi).
\end{align} 

\end{cor}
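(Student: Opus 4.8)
The plan is to deduce both inequalities directly from the tensorisation Theorem~\ref{th:main}: the hypercube $\Omega_n=\{0,1\}^n$ is the Cartesian product $K_2 \boxempty \cdots \boxempty K_2$ of $n$ copies of the two-point space (which is exactly the complete graph on two vertices), its graph distance is the $\ell^1$-sum of the coordinatewise Hamming distances, and $\mu^{\otimes n}=\mu\otimes\cdots\otimes\mu$. So it suffices to verify the one-dimensional hypothesis of Theorem~\ref{th:main} on $\{0,1\}$ with $\mu^i=\mu$ and to pin down the constant $C_i$.

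For \eqref{pluie}: on $K_2$ we know from Section~\ref{sec:complete} that $\nu_t^\pi=(1-t)\nu_0+t\nu_1$ for \emph{every} coupling $\pi$, so Proposition~\ref{convexity} gives, for all $t\in[0,1]$,
\[
H(\nu_t^\pi|\mu)\le (1-t)H(\nu_0|\mu)+tH(\nu_1|\mu)-\tfrac{t(1-t)}{2}\bigl(\Tw_2(\nu_1|\nu_0)+\Tw_2(\nu_0|\nu_1)\bigr).
\]
Taking $\pi=\pi^*$ to be the coupling \eqref{couplagebis}, the computation in the proof of Lemma~\ref{lem:postive} shows $\sum_y d(x,y)p^*(x,y)=[1-f_1(x)/f_0(x)]_+$, hence $I_2(\pi^*)=\Tw_2(\nu_1|\nu_0)$; running the same computation with the roles of $\nu_0$ and $\nu_1$ exchanged (which merely transposes $\pi^*$) gives $\bar I_2(\pi^*)=\Tw_2(\nu_0|\nu_1)$. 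Thus the displayed line becomes exactly the hypothesis of Theorem~\ref{th:main} with $C_i=1/2$ for every $i$, and since $C=\min_i C_i=1/2$ the theorem yields \eqref{pluie}.

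For \eqref{pluies}: I would argue identically, but start instead from \eqref{eq:csiszar} (the Csiszar-Kullback-Pinsker refinement in the Remark following Proposition~\ref{convexity}), which on $K_2$ reads $H(\nu_t^\pi|\mu)\le (1-t)H(\nu_0|\mu)+tH(\nu_1|\mu)-\tfrac{t(1-t)}{2}\|\nu_0-\nu_1\|_{TV}^2$. By Lemma~\ref{lem:postive} one has $\|\nu_0-\nu_1\|_{TV}^2=4\,\hat{\mathcal{T}}_2(\nu_0,\nu_1)=4\inf_\pi J_2(\pi)$, and the infimum is attained at a $W_1$-optimal coupling $\pi^*$ (which on $K_2$ is again \eqref{couplagebis}); hence $H(\nu_t^{\pi^*}|\mu)\le (1-t)H(\nu_0|\mu)+tH(\nu_1|\mu)-2t(1-t)J_2(\pi^*)$. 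This is precisely the $J_2$-version of the hypothesis of Theorem~\ref{th:main} with $C_i=2$, so with $C=\min_i C_i=2$ the theorem produces \eqref{pluies}.

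The heavy lifting---propagating displacement convexity through a Cartesian product---is already packaged inside Theorem~\ref{th:main}, so the only step requiring attention (a mild one) is the base case on the two-point space: one must check that the \emph{single} coupling \eqref{couplagebis} is optimal at once for both directional weak transport costs $\Tw_2(\nu_1|\nu_0)$ and $\Tw_2(\nu_0|\nu_1)$ (and, for the second inequality, is also $W_1$-optimal), so that the one-dimensional estimates of Proposition~\ref{convexity} and of the ensuing remark genuinely fit the template $H(\nu_t^\pi|\mu)\le(1-t)H(\nu_0|\mu)+tH(\nu_1|\mu)-C_it(1-t)(I_2(\pi)+\bar I_2(\pi))$ (respectively with $J_2(\pi)$) needed to feed the tensorisation theorem.
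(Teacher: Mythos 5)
Your proof is correct and follows essentially the same route as the paper: establish the one-dimensional displacement convexity on the two-point space via Proposition~\ref{convexity} (respectively via \eqref{eq:csiszar} together with Lemma~\ref{lem:postive} for the $J_2$ version), observe that the coupling \eqref{couplagebis} is simultaneously optimal for $\Tw_2(\nu_1|\nu_0)$, $\Tw_2(\nu_0|\nu_1)$ and $W_1$ so that the estimates take the form required by the hypothesis of Theorem~\ref{th:main} with $C_i=1/2$ (resp.\ $C_i=2$), and then invoke the tensorisation theorem. The only place you are slightly more explicit than the paper is in justifying $\bar I_2(\pi^*)=\Tw_2(\nu_0|\nu_1)$ by transposing the coupling, which is a welcome clarification rather than a deviation.
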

\proof
According to Proposition \ref{convexity}, for all $\nu_0,\nu_1 \in \mathcal{P}(\{0,1\})$, it holds
$$H(\nu_t|\mu) \leq 
(1-t)H(\nu_0|\mu)+tH(\nu_1|\mu) 
-\frac{t(1-t)}{2}\left(\Tw_{2}(\nu_1|\nu_0)+\Tw_{2}(\nu_0|\nu_1) \right),\qquad \forall t\in [0,1],$$
with $\nu_t =(1-t)\nu_0 + t\nu_1.$ It is not difficult to check that the coupling $\pi$ defined by \eqref{couplagebis} is optimal for both $\Tw_{2}(\nu_1|\nu_0)$ and $\Tw_{2}(\nu_0|\nu_1)$. Since on the two-point space $\nu_t=\nu_t^\pi$ is independent of $\pi$, the preceding inequality can be rewritten as follows:
$$H(\nu_t^\pi|\mu) \leq 
(1-t)H(\nu_0|\mu)+tH(\nu_1|\mu) 
-\frac{t(1-t)}{2}\left(I_2(\pi)+\bar{I}_2(\pi) \right),\qquad \forall t\in [0,1].$$
Therefore, we are in a position to apply Theorem \ref{th:main}, and to conclude that $\mu^{\otimes n}$ verifies the announced displacement convexity property \eqref{pluie}.

Similarly, by Lemma \ref{lem:postive}, the displacement convexity property \eqref{eq:csiszar} ensures that 
$$H(\nu_t^\pi|\mu) \leq 
(1-t)H(\nu_0|\mu)+tH(\nu_1|\mu) 
-{2t(1-t)}J_2(\pi),\qquad \forall t\in [0,1].$$
The result then follows from Theorem \ref{th:main}.
\endproof

Let $\pi$ be a coupling of $\nu_0,\nu_1 \in \mathcal{P}(\Omega_n)$.  By the Cauchy-Schwarz inequality, we have
\begin{align*}
J_2^{(n)}(\pi) 
& = 
\sum_{i=1}^n \left( \sum_{x,y \in \Omega_n} \1_{x_i \neq y_i} \pi(x,y) \right)^2  \geq 
\frac{1}{n} \left( \sum_{x,y \in \Omega_n} \sum_{i=1}^n \1_{x_i \neq y_i} \pi(x,y)  \right)^2 
 =
\frac{1}{n} \left( \sum_{x,y \in \Omega_n} d(x,y)  \pi(x,y) \right)^2 \\
& \geq \frac{1}{n} W_1 (\nu_1,\nu_0)^2 .
\end{align*}
We immediately deduce from Corollary  \ref{cor:dchypercube} the following weaker result.
\begin{cor}
Let $\mu$ be a probability measure on $\{0,1\}$ and define its $n$-fold product $\mu^{\otimes n}$ on $\Omega_n = \{0,1\}^n$. For any $\nu_0,\nu_1 \in \mathcal{P}(\Omega_n)$, there exists $\pi \in \Pi(\nu_0,\nu_1)$ such that for  $t \in [0,1]$,
\begin{align*} 
H(\nu_t^{\pi}|\mu^{\otimes n}) \leq 
(1-t)H(\nu_0|\mu^{\otimes n})+tH(\nu_1|\mu^{\otimes n}) 
-\frac{2t(1-t)}{n} W_1 (\nu_1,\nu_0)^2 .
\end{align*}
\end{cor}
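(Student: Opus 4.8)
The plan is to deduce this $W_1$-displacement convexity inequality directly from the stronger $J_2^{(n)}$-version already proved on the hypercube in Corollary~\ref{cor:dchypercube}, together with the elementary Cauchy--Schwarz estimate relating $J_2^{(n)}$ to the $L_1$-Wasserstein distance that is displayed just above the statement.

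First I would invoke the second assertion of Corollary~\ref{cor:dchypercube}, namely \eqref{pluies}: for the product measure $\mu^{\otimes n}$ on $\Omega_n$ and any $\nu_0,\nu_1\in\mathcal{P}(\Omega_n)$ there is a coupling $\pi\in\Pi(\nu_0,\nu_1)$ with
$$
H(\nu_t^{\pi}|\mu^{\otimes n}) \leq (1-t)H(\nu_0|\mu^{\otimes n})+tH(\nu_1|\mu^{\otimes n}) - 2t(1-t)\,J_2^{(n)}(\pi), \qquad \forall t\in[0,1].
$$
It then suffices to bound $J_2^{(n)}(\pi)$ from below by $\tfrac1n W_1(\nu_1,\nu_0)^2$ for this same $\pi$, and to substitute that lower bound into the displayed inequality; since it is a lower bound, the resulting estimate is a valid (weaker) upper bound on $H(\nu_t^{\pi}|\mu^{\otimes n})$, and the same $\pi$ still serves for every $t\in[0,1]$ because that uniformity is built into \eqref{pluies}.

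The lower bound on $J_2^{(n)}(\pi)$ is the computation already carried out in the excerpt: writing $d(x,y)=\sum_{i=1}^n\1_{x_i\neq y_i}$ for the Hamming distance and applying Cauchy--Schwarz to the sum over the $n$ coordinates,
$$
J_2^{(n)}(\pi)=\sum_{i=1}^n\Bigl(\sum_{x,y\in\Omega_n}\1_{x_i\neq y_i}\pi(x,y)\Bigr)^2\geq \frac1n\Bigl(\sum_{i=1}^n\sum_{x,y\in\Omega_n}\1_{x_i\neq y_i}\pi(x,y)\Bigr)^2=\frac1n\Bigl(\sum_{x,y\in\Omega_n}d(x,y)\pi(x,y)\Bigr)^2,
$$
and finally $\sum_{x,y}d(x,y)\pi(x,y)\geq W_1(\nu_1,\nu_0)$ because $\pi\in\Pi(\nu_0,\nu_1)$ while $W_1(\nu_1,\nu_0)$ is the infimum of this transport cost over all couplings. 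Combining the two displays gives $-2t(1-t)J_2^{(n)}(\pi)\leq-\tfrac{2t(1-t)}{n}W_1(\nu_1,\nu_0)^2$, which is exactly the claimed inequality.

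There is essentially no obstacle in this argument: the only point worth emphasizing is that the Cauchy--Schwarz estimate holds for \emph{any} coupling of $\nu_0$ and $\nu_1$ (in particular for the one delivered by Corollary~\ref{cor:dchypercube}, which need not be $W_1$-optimal), so no compatibility between the two steps has to be verified and the proof reduces to the two short manipulations above.
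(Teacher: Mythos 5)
Your proof is correct and is essentially identical to the paper's argument: the authors also deduce the statement from inequality \eqref{pluies} of Corollary \ref{cor:dchypercube} combined with the Cauchy--Schwarz bound $J_2^{(n)}(\pi)\geq \frac{1}{n}W_1(\nu_1,\nu_0)^2$, valid for any coupling $\pi$. Nothing is missing.
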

The constant $1/n$ encodes, in some sense, the discrete Ricci curvature of the hypercube
in accordance with the various definitions of the discrete Ricci curvature (see the introduction).

\begin{rem}\label{remarcable}
Since $\Tw_2$ is defined as an infimum, one can replace, for free, the term $I_2^{(n)}(\pi)$
by $\Tw_2^{(n)}(\nu_1|\nu_0)$ in \eqref{pluie}. Moreover, if one chooses $\nu_0=\mu^{\otimes n}$ and uses that $H(\nu_t^{\pi}|\mu^{\otimes n}) \geq 0$, one easily derives from 
\eqref{pluie} the following transport-entropy inequality:
$$
\Tw_2^{(n)}(\nu|\mu^{\otimes n})+\Tw_2^{(n)}(\mu^{\otimes n}|\nu) \leq 2  H(\nu|\mu^{\otimes n}), \qquad \forall \nu \in \mathcal{P}(\Omega_n) .
$$
See \cite{GRST} for more on such an inequality (on graphs). Note that the above argument is general and that one can always derive from the displacement convexity of the entropy some Talagrand-type transport-entropy inequality.
\end{rem}

%%%%%%%%%%%%%%%%%%%%%%%%%%%%%%%%%%%%%%%%%%%%%%%%%%%%%%%%%%%%%%%%%%%%%%%%%%%%%%%%%%%%
%%%%%%%%%%%%%%%%%%%%%%%%%%%%%%%%%%%%%%%%%%%%%%%%%%%%%%%%%%%%%%%%%%%%%%%%%%%%%%%%%%%%
%%%%%%%%%%%%%%%%%%%%%%%%%%%%%%%%%%%%%%%%%%%%%%%%%%%%%%%%%%%%%%%%%%%%%%%%%%%%%%%%%%%%

\section{HWI type inequalities on graphs. }\label{sec:hwi}

As already stated in the introduction, the displacement convexity of entropy property is usually (\textit{i.e.,}\ in continuous space settings)  the strongest property in the following hierarchy:
$$
\mbox{Displacement convexity } \Rightarrow \mbox{ HWI }  \Rightarrow \mbox{Log Sobolev} . 
$$
Applying an argument based on the differentiation property of Corollary \ref{parisbis}, in this section, we derive HWI and log-Sobolev type inequalities from the displacement convexity property.

We shall start with a general statement on product of graphs that allows to obtain  symmetric  HWI inequality from the displacement convexity property of the entropy.  As a consequence, we get a new symmetric  HWI inequality on the hypercube that  implies a modified log-Sobolev inequality on the hypercube. This modified log-Sobolev inequality also implies, by means of the Central Limit Theorem, the classical log-Sobolev inequality for the standard Gaussian measure, with the optimal constant.

Then we move to another HWI type inequality involving the already mentioned Dirichlet form ${\mathcal{E}_\mu(f,\log f)}$ based on  Equation~\eqref{crepe} available on complete graph. 

%%%%%%%%%%%%%%%%%%%%%%%%%%%%%%%%%%%%%%%%%%%%%%%%%%%%%%%%%%%%%%%%%%%%%%%%%%%%%%%%%%%%

\subsection{Symmetric HWI inequality for products of graphs}

The main result of this section is the following abstract symmetric HWI inequality valid on the $n$-fold  product of any graph.

\begin{prop}[HWI]\label{HWIprop}
Consider $G^n$ for $G=(V,E)$ any graph and $\mu\in \mathcal{P}(V^n)$. Assume that $\mu$ verifies the following displacement convexity inequality: there is some $c>0$ such that for any $\nu_0, \nu_1 \in \mathcal{P}(V^n)$, there exists a coupling $\pi \in \Pi(\nu_0,\nu_1)$ such that
$$
H(\nu_t^\pi |\mu) \leq (1-t) H(\nu_0|\mu) + t H(\nu_1|\mu) -ct(1-t) (I_2^{(n)} (\pi)+\bar{I}_2^{(n)}(\pi)) \qquad \forall t \in [0,1].
$$
Then $\mu$ verifies 
\begin{align} \label{shwi}
H(\nu_0|\mu) 
&\leq 
H(\nu_1|\mu) + \sqrt{\sum_{x\in V^n} \sum_{i=1}^n \left[\sum_{z\in N_i(x)} \left( \log  \frac{\nu_0(x)}{\mu(x)} - \log \frac{\nu_0(z)}{\mu(z)}  \right)\right]_{+}^2\nu_0(x)}\sqrt{I_2^{(n)}(\pi)} %\nonumber \\ & \quad 
- c(I_2^{(n)}(\pi) + \bar{I}_2^{(n)}(\pi)),
\end{align}
for the same $\pi\in \Pi(\nu_0,\nu_1)$ as above, where $N_i(x)=\{z \in V^n ; d(x,z)=1\text{ and } x_i\neq z_i\}$.
\end{prop}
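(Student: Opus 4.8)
The plan is the standard route from displacement convexity to an HWI‑type inequality: rearrange the convexity hypothesis, let $t\downarrow 0$, identify the resulting derivative of the entropy through the differentiation formula of Corollary~\ref{parisbis}, and bound it by a ``discrete gradient $\times\sqrt{I_2^{(n)}}$'' term. First I would dispose of the degenerate cases: if $\nu_1$ is not absolutely continuous with respect to $\mu$ then $H(\nu_1|\mu)=+\infty$ and \eqref{shwi} is trivial, and if $\operatorname{supp}\nu_0\neq V^n$ then some $x$ with $\nu_0(x)>0$ has a neighbour $z$ with $\nu_0(z)=0$, so $\bigl[\log\frac{\nu_0(x)}{\mu(x)}-\log\frac{\nu_0(z)}{\mu(z)}\bigr]_+=+\infty$ and the square‑root term is $+\infty$; hence we may assume $\nu_0,\nu_1\ll\mu$ with $\operatorname{supp}\nu_0=V^n$, and set $F:=\log(\nu_0/\mu)$. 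Then $t\mapsto H(\nu_t^\pi|\mu)$ is smooth near $0$, and rearranging the hypothesis and dividing by $t\in(0,1]$ gives
$$
\frac{H(\nu_t^\pi|\mu)-H(\nu_0|\mu)}{t}\;\le\;H(\nu_1|\mu)-H(\nu_0|\mu)-c(1-t)\bigl(I_2^{(n)}(\pi)+\bar{I}_2^{(n)}(\pi)\bigr);
$$
letting $t\downarrow 0$ yields
$$
H(\nu_0|\mu)\;\le\;H(\nu_1|\mu)-\frac{\partial}{\partial t}H(\nu_t^\pi|\mu)\Big|_{t=0}-c\bigl(I_2^{(n)}(\pi)+\bar{I}_2^{(n)}(\pi)\bigr),
$$
so it remains to bound $-\partial_t H(\nu_t^\pi|\mu)|_{t=0}$ from above by the first square‑root term of \eqref{shwi}.

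For this I would feed Corollary~\ref{parisbis} into the product structure of $G^n$. That corollary gives
$$
\frac{\partial}{\partial t}H(\nu_t^\pi|\mu)\Big|_{t=0}=\sum_{x}\sum_{z\sim x}\bigl(F(z)-F(x)\bigr)\,w(x,z),\qquad w(x,z):=\sum_{y}d(x,y)\frac{|\Gamma(x,z,y)|}{|\Gamma(x,y)|}\pi(x,y)\ge 0 .
$$
Since every neighbour $z$ of $x$ in the Cartesian product differs from $x$ in a single coordinate, the inner sum decomposes as $\sum_{i=1}^n\sum_{z\in N_i(x)}$. The combinatorial identity I would establish is
$$
\sum_{z\in N_i(x)}w(x,z)=\Bigl(\sum_{y}d_i(x_i,y_i)\frac{\pi(x,y)}{\nu_0(x)}\Bigr)\nu_0(x)=:a_i(x)\,\nu_0(x),\qquad\text{whence}\qquad I_2^{(n)}(\pi)=\sum_{x}\sum_{i=1}^n a_i(x)^2\nu_0(x).
$$
Indeed, for $z\in N_i(x)$ the ratio $|\Gamma(x,z,y)|/|\Gamma(x,y)|$ is the probability that a uniformly chosen geodesic from $x$ to $y$ starts with the edge $\{x,z\}$; the multinomial description of geodesics on a Cartesian product used in the proof of Lemma~\ref{tensor} shows that the step‑directions of such a geodesic form a uniformly random arrangement of the multiset with $d_j(x_j,y_j)$ copies of $j$, so summing over $z\in N_i(x)$ gives $d_i(x_i,y_i)/d(x,y)$; multiplying by $d(x,y)\,\pi(x,y)$ and summing over $y$ gives the claim.

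Next, using $F(z)-F(x)\ge-[F(x)-F(z)]_+$ together with $w\ge 0$, then the Cauchy--Schwarz inequality applied first in the variable $z\in N_i(x)$ (bounding $\sum_{z}w(x,z)^2$ by $\bigl(\sum_z w(x,z)\bigr)^2=(a_i(x)\nu_0(x))^2$) and then in the pair $(x,i)$ with weight $\nu_0(x)$, combined with the identity for $I_2^{(n)}(\pi)$ above, one reaches an inequality of the form
$$
-\frac{\partial}{\partial t}H(\nu_t^\pi|\mu)\Big|_{t=0}\;\le\;\sqrt{\sum_{x}\sum_{i=1}^n\Bigl[\sum_{z\in N_i(x)}\bigl(F(x)-F(z)\bigr)\Bigr]_+^2\nu_0(x)}\;\sqrt{I_2^{(n)}(\pi)} ,
$$
which, substituted into the inequality of the first paragraph and combined with $F=\log(\nu_0/\mu)$, gives \eqref{shwi}.

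The step I expect to be the main obstacle is precisely the last one: the weights $w(x,z)$ are in general \emph{not} constant over $z\in N_i(x)$ — within a fibre they carry the factor $|\Gamma_i(z_i,y_i)|/|\Gamma_i(x_i,y_i)|$, since only the neighbours of $x_i$ lying on a geodesic towards $y_i$ receive mass from a given $y$ — so organizing the positive‑part estimate and the two Cauchy--Schwarz steps so that the \emph{sum over $z\in N_i(x)$ ends up inside the positive part} (rather than producing a sum of squared positive parts, or a maximum over $z$) is the delicate point, and requires careful bookkeeping of how the ``towards $y$'' restriction on geodesics interacts with the fibre $N_i(x)$. When each $x_i$ has a single relevant neighbour in the $i$‑th direction — in particular for the two‑point space, hence for the hypercube via the tensorisation of Lemma~\ref{tensor} — this subtlety evaporates and the estimate above is immediate; that is exactly the regime used in the applications announced in the introduction.
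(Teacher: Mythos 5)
Your overall route coincides with the paper's: rearrange the displacement convexity inequality, let $t\downarrow 0$, identify $-\partial_t H(\nu_t^\pi|\mu)|_{t=0}$ via Corollary~\ref{parisbis}, split the sum over neighbours into the fibres $N_i(x)$, reduce the weight $w(x,z)=\sum_y d(x,y)\frac{|\Gamma(x,z,y)|}{|\Gamma(x,y)|}\pi(x,y)$ to the $i$-th coordinate via the product formula \eqref{changement}, and finish with Cauchy--Schwarz over $(x,i)$ against $\nu_0(x)$ to produce $\sqrt{I_2^{(n)}(\pi)}$. Your combinatorial identity $\sum_{z\in N_i(x)}w(x,z)=\sum_y d_i(x_i,y_i)\pi(x,y)$ is correct and is the summed form of the pointwise bound $w(x,z)\le \sum_y d_i(x_i,y_i)\pi(x,y)$ that the paper uses.

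However, as submitted your proof has a genuine gap, and it sits exactly where you say it does: you never establish the inequality $\sum_{z\in N_i(x)}(F(x)-F(z))\,w(x,z)\le \bigl[\sum_{z\in N_i(x)}(F(x)-F(z))\bigr]_+\,W_i(x)$ with $W_i(x)=\sum_y d_i(x_i,y_i)\pi(x,y)$, which is what is needed to get the bracket \emph{around} the sum over $z$ as in \eqref{shwi}. The two-step Cauchy--Schwarz scheme you sketch (first in $z$, then in $(x,i)$) can only deliver $\sum_{z\in N_i(x)}[F(x)-F(z)]_+$ or $\bigl(\sum_{z\in N_i(x)}[F(x)-F(z)]_+^2\bigr)^{1/2}$ in place of $\bigl[\sum_{z\in N_i(x)}(F(x)-F(z))\bigr]_+$, and since $\sum_z[a_z]_+\ge[\sum_z a_z]_+$ this yields a strictly weaker conclusion than the stated one. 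You should be aware that the paper's own proof performs precisely this grouping step in a single displayed inequality, with no more justification than you give; and your diagnosis of why it is delicate is accurate: the weights $w(x,z)$ are not constant over $z\in N_i(x)$ (they carry the factor $|\Gamma(x_i,z_i,y_i)|/|\Gamma(x_i,y_i)|$, which vanishes for neighbours not lying towards $y_i$), so one cannot pull the positive part through the $z$-sum merely from $0\le w(x,z)\le W_i(x)$; a sign-mixed configuration with unequal weights defeats the termwise argument. The step is immediate only when the weight is constant on each fibre, in particular when $|N_i(x)|=1$, which covers the two-point space and hence, by tensorisation, the hypercube --- the only cases in which the proposition is subsequently applied. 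So to make your proof complete you must either (i) prove the grouping inequality in the generality claimed (or restrict the statement to that regime), or (ii) accept the weaker version of \eqref{shwi} with $\sum_{z\in N_i(x)}[\,\cdot\,]_+$ replacing $\bigl[\sum_{z\in N_i(x)}(\,\cdot\,)\bigr]_+$ inside the square, which your argument does establish and which still suffices for all the corollaries drawn from the proposition.
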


The proof of this result is given below.
Before proving that, we derive a certain {\em reinforced} log-Sobolev inequality (see below for a brief justification of the name) in the discrete setting, and as a consequence, the classical 
Gross' log-Sobolev inequality on the continuous line, with the optimal constant.

Choose $\nu_1=\mu$ in \eqref{shwi} and denote by $f(x)=\nu_0(x)/\mu(x)$. Then, using the elementary inequality $\sqrt{ab}\leq a/(2\e) + \e b/2$, $\e>0$, we immediately get the following corollary.
 
\begin{cor}[Reinforced log-Sobolev]\label{cortarte}
Under the same assumptions of Proposition \ref{HWIprop}, for all 
$f \colon V^n \to (0,\infty)$ with $\mu(f)=1$,  for all $\e \leq 2c$, it holds that
\begin{align} \label{hwihc}
\ent_\mu(f)
 \leq 
\frac{1}{2\e} \sum_{x\in V^n} \sum_{i=1}^n \left[\sum_{z\in N_i(x)} \left(\log  f(x) - \log f(z)  \right)\right]_{+}^2f(x)\mu(x) %\nonumber \\
%& \quad 
-(c-\frac{\e}{2}) \Tw_2(\mu|f\mu)-c \Tw_2(f\mu|\mu).
\end{align}
\end{cor}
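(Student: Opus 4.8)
The plan is to deduce Corollary~\ref{cortarte} directly from the symmetric HWI inequality \eqref{shwi} of Proposition~\ref{HWIprop} by specializing $\nu_1 = \mu$ and applying Young's inequality. First I would set $\nu_0 = f\mu$ (so that $f = \nu_0/\mu$ is a density with $\mu(f) = 1$) and $\nu_1 = \mu$ in \eqref{shwi}. Then $H(\nu_1|\mu) = H(\mu|\mu) = 0$, and $H(\nu_0|\mu) = \ent_\mu(f)$ by the definition of $\ent_\mu$ recalled in Section~\ref{sec:notation}. Moreover, the logarithmic term simplifies: $\log\frac{\nu_0(x)}{\mu(x)} - \log\frac{\nu_0(z)}{\mu(z)} = \log f(x) - \log f(z)$, so the square-root factor in \eqref{shwi} becomes exactly
$$
\sqrt{\sum_{x\in V^n}\sum_{i=1}^n\Bigl[\sum_{z\in N_i(x)}\bigl(\log f(x)-\log f(z)\bigr)\Bigr]_+^2 f(x)\mu(x)}\,,
$$
which I will abbreviate as $\sqrt{D}$ with $D$ the Dirichlet-type quantity appearing on the right-hand side of \eqref{hwihc}.

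Next I would handle the remaining two terms. With $\nu_0 = f\mu$ and $\nu_1 = \mu$, the coupling $\pi \in \Pi(f\mu,\mu)$ furnished by Proposition~\ref{HWIprop} satisfies, by the very definitions of $\Tw_2$ and of $I_2^{(n)},\bar I_2^{(n)}$ as infima over couplings (Definition~\ref{weak} and Section~\ref{sec:wtc}), the bounds $I_2^{(n)}(\pi) \geq \Tw_2(\mu|f\mu)$ and $\bar I_2^{(n)}(\pi) \geq \Tw_2(f\mu|\mu)$; here I am using that $I_2^{(n)}(\pi)$ is built from the forward kernel of $\pi$ (from $\nu_0 = f\mu$ to $\nu_1 = \mu$) while $\bar I_2^{(n)}(\pi)$ is built from the backward kernel (from $\mu$ to $f\mu$). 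Hence $-c(I_2^{(n)}(\pi)+\bar I_2^{(n)}(\pi)) \leq -c\,\Tw_2(\mu|f\mu) - c\,\Tw_2(f\mu|\mu)$. Substituting these into \eqref{shwi} gives
$$
\ent_\mu(f) \leq \sqrt{D}\,\sqrt{I_2^{(n)}(\pi)} - c\,\Tw_2(\mu|f\mu) - c\,\Tw_2(f\mu|\mu)\,.
$$

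Finally I would apply the elementary inequality $\sqrt{ab} \leq \frac{a}{2\e} + \frac{\e b}{2}$ with $a = D$, $b = I_2^{(n)}(\pi)$, and any $\e > 0$, obtaining $\sqrt{D}\sqrt{I_2^{(n)}(\pi)} \leq \frac{1}{2\e} D + \frac{\e}{2} I_2^{(n)}(\pi)$. Plugging this in,
$$
\ent_\mu(f) \leq \frac{1}{2\e} D + \frac{\e}{2} I_2^{(n)}(\pi) - c\,\Tw_2(\mu|f\mu) - c\,\Tw_2(f\mu|\mu)\,.
$$
The only remaining point is to absorb the term $\frac{\e}{2} I_2^{(n)}(\pi)$: since $I_2^{(n)}(\pi) \geq \Tw_2(\mu|f\mu) \geq 0$, if $\e \leq 2c$ then $\frac{\e}{2} I_2^{(n)}(\pi) - c\,\Tw_2(\mu|f\mu) \leq \frac{\e}{2} I_2^{(n)}(\pi) - c\, I_2^{(n)}(\pi) \cdot \frac{\Tw_2(\mu|f\mu)}{I_2^{(n)}(\pi)}$ — more cleanly, I would instead keep $I_2^{(n)}(\pi)$ paired with the $c$-term before lower-bounding: write $\frac{\e}{2} I_2^{(n)}(\pi) - c I_2^{(n)}(\pi) = -(c - \frac{\e}{2}) I_2^{(n)}(\pi) \leq -(c-\frac{\e}{2})\Tw_2(\mu|f\mu)$ whenever $c - \frac{\e}{2} \geq 0$, i.e. $\e \leq 2c$, and then bound the untouched $\bar I_2^{(n)}(\pi) \geq \Tw_2(f\mu|\mu)$ term by $-c\,\Tw_2(f\mu|\mu)$. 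This yields exactly \eqref{hwihc}. The only mildly delicate bookkeeping is the order of operations in this last step — one must apply Young's inequality and split off the $c I_2^{(n)}$ contribution \emph{before} replacing $I_2^{(n)}(\pi)$ by its infimum $\Tw_2(\mu|f\mu)$, so that the sign of the coefficient $c - \e/2$ is nonnegative; this is where the hypothesis $\e \leq 2c$ is used, and it is the main (very minor) obstacle in an otherwise routine derivation.
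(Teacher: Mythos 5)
Your proposal is correct and follows essentially the same route as the paper, which simply sets $\nu_1=\mu$ in \eqref{shwi} and applies $\sqrt{ab}\leq a/(2\e)+\e b/2$; your extra care in splitting off the $\frac{\e}{2}I_2^{(n)}(\pi)$ term \emph{before} passing to the infima $\Tw_2(\mu|f\mu)\leq I_2^{(n)}(\pi)$ and $\Tw_2(f\mu|\mu)\leq \bar I_2^{(n)}(\pi)$ is exactly the right bookkeeping, and correctly identifies where $\e\leq 2c$ is needed.
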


Inequality \eqref{hwihc} can be seen as a reinforcement of a (discrete) modified log-Sobolev inequality.
The next corollary deals with the special case of the discrete cube.

\begin{cor}[Reinforced log-Sobolev on $\Omega_n$ and Gross' Inequality] \label{cor:lsob}
Let $\mu$ be a Bernoulli measure on $\{0,1\}$. Then, for any $n$ and any $f \colon \Omega_n \to (0,\infty)$, it holds
\begin{equation} \label{chaud}
\ent_{\mu^{\otimes n}}(f)\leq \frac{1}{2} \sum_{x\in \Omega_n} \sum_{i=1}^n  \left[ \log  f(x) - \log f(\sigma_i(x))  \right]_{+}^2f(x)\mu^{\otimes n}(x) -  \frac{1}{2} \Tw_2(f\mu|\mu)\,,
\end{equation}
where $\sigma_i(x)=(x_1,\dots,x_{i-1},1-x_i,x_{i+1},\dots,x_n)$ is the neighbor of $x=(x_1,\dots,x_n)$ for which the $ i$-th coordinate differs from that of $x$.

As a consequence, for any $n$ and any $g \colon \mathbb{R}^n \to \mathbb{R}$ smooth enough, it holds
\begin{equation} \label{gross}
\ent_{\gamma_n}(e^g)\leq  \frac{1}{2} \int |\nabla g|^2 e^g d\gamma_n
\end{equation}
where $\gamma_n$ is the standard Gaussian measure on $\mathbb{R}^n$, and $|\nabla g|$ is the length of the gradient of $g$.
\end{cor}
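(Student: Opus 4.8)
The plan is to read \eqref{chaud} off from Corollary~\ref{cortarte}, and then to deduce \eqref{gross} from \eqref{chaud} by a central limit argument.

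For \eqref{chaud} I would argue as follows. By Corollary~\ref{cor:dchypercube} (specifically \eqref{pluie}), the product Bernoulli measure $\mu^{\otimes n}$ on $\Omega_n$ satisfies the displacement convexity hypothesis of Proposition~\ref{HWIprop} with $c=1/2$. Applying Corollary~\ref{cortarte} with this $c$ and the endpoint choice $\e=2c=1$, the coefficient $1/(2\e)$ of the Dirichlet-type term becomes $1/2$, the coefficient $c-\e/2$ of $\Tw_2(\mu|f\mu)$ vanishes, and the coefficient of $\Tw_2(f\mu|\mu)$ equals $c=1/2$. Since on $\Omega_n$ the set $N_i(x)=\{z:\ d(x,z)=1,\ x_i\neq z_i\}$ is the single vertex $\sigma_i(x)$, the inner bracket in \eqref{hwihc} collapses to $[\log f(x)-\log f(\sigma_i(x))]_+$, which yields \eqref{chaud}. (For the application below it is better to retain both weak transport costs $\Tw_2^{(n)}(f\mu^{\otimes n}|\mu^{\otimes n})$ and $\Tw_2^{(n)}(\mu^{\otimes n}|f\mu^{\otimes n})$ coming from the two terms $I_2^{(n)},\bar I_2^{(n)}$ in Proposition~\ref{HWIprop}, i.e. the full $\widetilde W_2^2$; this is what keeps the inequality tight.)

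For \eqref{gross} I would run a central limit argument. Fix $n$, and first assume $g\colon\R^n\to\R$ is bounded with bounded first and second derivatives; the general case follows by truncating $g$ outside a large ball and letting the truncation radius tend to infinity, the right-hand side being monotone along the truncation (and there is nothing to prove if $\int|\nabla g|^2e^g\,d\gamma_n=\infty$). Work on $\Omega_{nN}$, split its coordinates into $n$ consecutive blocks of length $N$, and for block $j$ set $S_N^{(j)}(x)=\bigl(\sum_{k\in\text{block }j}x_k-N/2\bigr)/(\sqrt N/2)$, so that under $\mu^{\otimes nN}$ the vector $S_N=(S_N^{(1)},\dots,S_N^{(n)})$ converges in law to $\gamma_n$. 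Apply \eqref{chaud} on $\Omega_{nN}$ to $f_N=e^{g\circ S_N}/Z_N$, with $Z_N=\mu^{\otimes nN}(e^{g\circ S_N})\to Z:=\int e^g\,d\gamma_n$, and let $N\to\infty$. Flipping a coordinate of block $j$ shifts $S_N^{(j)}$ by $\pm2/\sqrt N$ and fixes the other blocks, so Taylor's formula gives $\log f_N(x)-\log f_N(\sigma_k(x))=\mp\tfrac{2}{\sqrt N}\,\partial_j g(S_N(x))+O(1/N)$ for $k$ in block $j$; summing the squared positive parts over the $N$ coordinates of block $j$ — where the two signs occur according to the value of $x_k$, each in a proportion tending to $1/2$ — gives $\sum_k\bigl[\log f_N(x)-\log f_N(\sigma_k(x))\bigr]_+^2\to 2\,|\nabla g(S_N(x))|^2$. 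Together with the central limit theorem (uniform integrability being granted by the boundedness of $g$), the Dirichlet term of \eqref{chaud} tends to $\int|\nabla g|^2e^g\,d\gamma_n$ and $\ent_{\mu^{\otimes nN}}(f_N)\to\ent_{\gamma_n}(e^g)$, both after clearing the normalization $Z_N$. Without the weak transport term this would only give $\ent_{\gamma_n}(e^g)\le\int|\nabla g|^2e^g\,d\gamma_n$; the extra factor $1/2$ is supplied by that term, which — using the tensorisation of $\Tw_2$ in Proposition~\ref{prop:tensor} and the explicit one-dimensional formula of Lemma~\ref{lem:postive} — one checks converges to $\tfrac12\int|\nabla g|^2e^g\,d\gamma_n$, cancelling exactly half of the Dirichlet term and leaving \eqref{gross}.

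The main obstacle is precisely this asymptotic analysis of the weak transport term: one has to evaluate, or at least lower bound, $\Tw_2^{(nN)}(f_N\mu^{\otimes nN}|\mu^{\otimes nN})$ (and its companion) as $N\to\infty$, which requires unwinding the infimum over couplings through the Knothe--Rosenblatt/martingale structure behind Proposition~\ref{prop:tensor}; the optimal constant $1/2$ in \eqref{gross} stands or falls with this step. The remaining items — justifying the limit in each term under the CLT, and the reduction from general smooth $g$ to bounded $g$ with bounded derivatives — are routine.
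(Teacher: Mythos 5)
Your derivation of \eqref{chaud} is exactly the paper's: Corollary \ref{cor:dchypercube} shows that $\mu^{\otimes n}$ satisfies the hypothesis of Proposition \ref{HWIprop} with $c=1/2$, and Corollary \ref{cortarte} with $\e=1$, together with the observation that $N_i(x)=\{\sigma_i(x)\}$ on the cube, gives \eqref{chaud}. No issue there.

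The passage to \eqref{gross} has a genuine gap, located precisely where you yourself place the ``main obstacle''. With symmetric Bernoulli blocks both flip directions contribute in proportion $1/2$, so your Dirichlet term converges to $\int|\nabla g|^2e^g\,d\gamma_n$ (i.e.\ Gross with constant $1$), and you need the weak transport term to converge to exactly $\tfrac12\int|\nabla g|^2e^g\,d\gamma_n$ to recover the constant $\tfrac12$. This identification is false. The quantity $\Tw_2^{(nN)}(f_N\mu^{\otimes nN}|\mu^{\otimes nN})$ is an infimum over couplings and converges (after the natural rescaling of one Hamming flip to a displacement $2/\sqrt N$) to a barycentric transport cost between $e^g\gamma_n/Z$ and $\gamma_n$ --- a metric-type quantity, not a Fisher-information-type quantity. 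For $g=\varepsilon h$ with $h$ highly oscillating, the transport cost is of order $\varepsilon^2\|h-\gamma_n(h)\|_{\dot H^{-1}(\gamma_n)}^2$ while $\int|\nabla g|^2e^g\,d\gamma_n\approx\varepsilon^2\int|\nabla h|^2\,d\gamma_n$; the former can be made arbitrarily small relative to the latter, so the claimed cancellation of ``exactly half'' of the Dirichlet term cannot occur. Your argument therefore yields only the non-optimal constant $1$ in \eqref{gross}.

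The paper reaches the sharp constant by a different mechanism: it discards the transport term altogether (starting from the modified log-Sobolev inequality \eqref{mod+logsob}), reduces to $n=1$ by tensorisation of the log-Sobolev inequality, invokes Miclo's result \cite{miclo} that one-dimensional extremizers are monotone, and applies \eqref{mod+logsob} to $f=e^{G_n}$ with asymmetric Bernoulli($p$) weights. Monotonicity kills one of the two terms in $\sum_{x_i}[G_n(x)-G_n(\sigma_i(x))]_+^2e^{G_n(x)}\mu_p(x_i)$; the surviving term carries the weight $p$, which combines with the Taylor factor $1/(np(1-p))$ to produce $\frac{1}{2(1-p)}\int g'^2e^g\,d\gamma$ in the limit, and letting $p\to0$ gives the constant $\tfrac12$. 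If you wish to keep your block construction, you would still need a substitute for this monotonicity/asymmetry step; the transport term cannot do that job.
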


\begin{rem}
Note that the constant $1/2$ in the above log-Sobolev inequality for the standard Gaussian is optimal, see \textit{e.g.}\ \cite[Chapter 1]{ane}.
\end{rem}

%%In light of the two-point example in Section 2 and remarks there, the following is now unnecessary!
%%
%Due to the negative part, Inequality \eqref{chaud} is stronger than the following modified Logarithmic Sobolev inequality on the discrete cube:
%\begin{eqnarray}\label{mod+logsob}
%$$
%\ent_{\mu^{\otimes n}}(f)\leq  \frac{1}{2} \sum_{x\in \Omega_n} \sum_{i=1}^n  \left[ \log  f(x) - \log f(\sigma_i(x))  \right]_{+}^2f(x)\mu^{\otimes n}(x) ,
%$$
%\end{eqnarray}
%and hence the name, ``Reinforced log-Sobolev".

We proceed with the proofs of Proposition \ref{HWIprop} and Corollary \ref{cor:lsob}.

\begin{proof}[Proof of Proposition \ref{HWIprop}] The displacement convexity inequality ensures that for all $t \in [0,1]$, 
$$H(\nu_0|\mu)\leq H(\nu_1|\mu)- \frac{H(\nu_t  | \mu) - H(\nu_0|\mu)}{t} -c (I_2^{(n)} (\pi)+\bar{I}_2^{(n)}(\pi)).$$
As $t$ goes to 0, this yields
$$H(\nu_0|\mu)\leq H(\nu_1|\mu) - \frac{\partial}{\partial t}H(\nu_t^\pi|\mu)_{|t=0}- c(I_2^{(n)} (\pi)+\bar{I}_2^{(n)}(\pi)),$$
where $\pi \in\Pi(\nu_0,\nu_1)$.
According to Corollary \ref{parisbis}, it holds
\begin{align*}
- \frac{\partial}{\partial t}  H(\nu_t^\pi|\mu)_{|t=0} & =\sum_{\genfrac{}{}{0pt}{}{x,z \in V^n:}{z \sim x}} \left( \log  \frac{\nu_0(x)}{\mu(x)} - \log \frac{\nu_0(z)}{\mu(z)}  \right)
\sum_{y \in V^n} d(x,y)\frac{|\Gamma(x,z,y)|}{|\Gamma(x,y)|} \pi(x,y)\\
&= \sum_{\genfrac{}{}{0pt}{}{x,z \in V^n:}{z \sim x}} \left( \log  \frac{\nu_0(x)}{\mu(x)} - \log \frac{\nu_0(z)}{\mu(z)}  \right)
\sum_{y \in V^n} d(x,y)\frac{|\Gamma(x,z,y)|}{|\Gamma(x,y)|} \pi(x,y)\\
&\leq \sum_{x\in V^n} \sum_{i=1}^n \left[\sum_{z\in N_i(x)} \left( \log  \frac{\nu_0(x)}{\mu(x)} - \log \frac{\nu_0(z)}{\mu(z)} \right) \right]_{+}
\sum_{y \in V^n} d(x,y)\frac{|\Gamma(x,z,y)|}{|\Gamma(x,y)|} \pi(x,y) .
\end{align*}
According to \eqref{eq:geod-prod}, by induction on $n\ge 1$, we get  that for all $u,y\in V^n$, 
$$|\Gamma(u,y)|=\frac{d(u,y)!}{\prod_{j=1}^n d(u_j,y_j)!} \prod_{j=1}^n |\Gamma(u_j,y_j)|.$$
Applying this formula with $u=z\in N_i(x)$ for some $i\in \{1,\ldots,n\}$ and $u=x$, we get that for all $y$ such that $z\in \llbracket x,y\rrbracket$, it holds
\begin{equation}\label{changement}
\frac{|\Gamma(x,z,y)|}{|\Gamma(x,y)|}=\frac{|\Gamma(z,y)|}{|\Gamma(x,y)|}=\frac{d(z,y)!}{d(x,y)!} \frac{d(x_i,y_i)!}{d(z_i,y_i)!}\frac{|\Gamma(z_i,y_i)|}{|\Gamma(x_i,y_i)|}=\frac{d(x_i,y_i)}{d(x,y)}\frac{|\Gamma(z_i,y_i)|}{|\Gamma(x_i,y_i)|},
\end{equation}
using that $x_j=z_j$ for all $i\neq j$ and the relations $d(x,y)=1+d(z,y)$ and $d(x_i,y_i)=1+d(z_i,y_i).$
Therefore, when $z\in N_i(x)$,
$$\sum_{y \in V^n} d(x,y)\frac{|\Gamma(x,z,y)|}{|\Gamma(x,y)|} \pi(x,y)=\sum_{y \in V}  d(x_i,y_i)\frac{|\Gamma(x_i,z_i,y_i)|}{|\Gamma(x_i,y_i)|} \pi(x,y)\leq \sum_{y \in V}  d(x_i,y_i) \pi(x,y).$$
Plugging this inequality into the expression for  $-\frac{\partial}{\partial t}H(\nu_t^\pi|\mu)_{|t=0}$ yields:
\begin{align*}
-\frac{\partial}{\partial t}H(\nu_t^\pi|\mu)_{|t=0}
&\leq \sum_{x\in V^n} \sum_{i=1}^n \left[ \sum_{z\in N_i(x)} \left(\log  \frac{\nu_0(x)}{\mu(x)} - \log \frac{\nu_0(z)}{\mu(z)} \right) \right]_{+}
\sum_{y \in V^n} d(x_i,y_i)\pi(x,y)\\
&\leq \sum_{x\in V^n} \sum_{i=1}^n\left[ \sum_{z\in N_i(x)} \left( \log  \frac{\nu_0(x)}{\mu(x)} - \log \frac{\nu_0(z)}{\mu(z)} \right) \right]_{+}
\sum_{y \in V^n} d(x_i,y_i)\frac{\pi(x,y)}{\nu_0(x)}\,\nu_0(x)\\
&\leq \sqrt{\sum_{x\in V^n} \sum_{i=1}^n \left[\sum_{z\in N_i(x)} \left( \log  \frac{\nu_0(x)}{\mu(x)} - \log \frac{\nu_0(z)}{\mu(z)}  \right)\right]_{+}^2\nu_0(x)}\sqrt{I_2^{(n)}(\pi)},
\end{align*}
where the last line follows from the Cauchy-Schwarz inequality. This completes the proof.
\end{proof}

\begin{proof}[Proof of Corollary \ref{cor:lsob}]
By Corollary \ref{cor:dchypercube}, 
Inequality \eqref{hwihc} holds with $c=1/2$. Observe that $N_i(x)=\{\sigma_i(x)\}$ where 
$\sigma_i(x)=(x_1,\dots,x_{i-1},1-x_i,x_{i+1},\dots,x_n)$ is the neighbor of $x=(x_1,\dots,x_n)$ for which the $ i$-th coordinate differs from that of $x$. For $\e=1$, Corollary~\ref{cortarte} gives 
\begin{eqnarray*}
\ent_{\mu^{\otimes n}}(f)\leq \frac{1}{2} \sum_{x\in \Omega_n} \sum_{i=1}^n  \left[ \log  f(x) - \log f(\sigma_i(x))  \right]_{+}^2f(x)\mu^{\otimes n}(x) -  \frac{1}{2} \Tw_2(f\mu|\mu)\,,
\end{eqnarray*}
which is the first part of the corollary. 

For the second part, we shall apply the Central Limit Theorem. Our starting point is the 
following modified log-Sobolev inequality on the hypercube:
\begin{eqnarray}\label{mod+logsob}
\ent_{\mu^{\otimes n}}(f)\leq  \frac{1}{2} \sum_{x\in \Omega_n} \sum_{i=1}^n  \left[ \log  f(x) - \log f(\sigma_i(x))  \right]_{+}^2f(x)\mu^{\otimes n}(x) 
\end{eqnarray}
that holds for all product probability measures on the hypercube $\Omega_n=\{0,1\}^n$, for all dimensions $n\ge 1$. 

First we observe that, by tensorisation of the log-Sobolev inequality (see \textit{e.g.} \cite[Chapter 1]{ane}), we only need to prove 
Gross' Inequality \eqref{gross} in dimension one ($n=1$). Then, thanks to a result by Miclo \cite{miclo}, we know that extremal functions in the log-Sobolev inequality, in dimension one, are monotone. Hence, we can assume that $g$ is monotone and non-decreasing (the case $g$ non-increasing can be treated similarly). Furthermore, for convenience, we first assume that the function 
$g \colon \mathbb{R} \to \mathbb{R}$ is smooth and compactly supported.  

Let $\mu_p$ be the  Bernoulli probability measure with parameter $p\in [0,1]$. We apply \eqref{mod+logsob} to the function $f=e^{G_n}$, with 
$$
G_n(x)={ g \left( \frac{\sum_{i=1}^n x_i - np}{\sqrt{ np(1-p)}} \right)}, \qquad  x \in \Omega_n,
$$ 
so that $\ent_{\mu_p^{\otimes n}}\left(e^{G_n}\right)$ tends to $\ent_\gamma(e^g)$ by the Central Limit Theorem.
It remains to identify the limit, when $n$ tends to infinity, of the Dirichlet form (the first term in the right-hand side of \eqref{mod+logsob}). 
Let $\bar{x}^iy_i$ denote the vector $(x_1,\ldots,x_{i-1},y_i,x_{i+1},\ldots,x_{n})$.
Then,
 \begin{align*}
\sum_{x_i\in \{0,1\}} [G_n(x)-G_n(\sigma_i(x))]_+^2 e^{G_n(x)}\,\mu_p(x_i)&=p [G_n(\bar{x}^i1)-G_n(\bar{x}^i0)]_+^2 e^{G_n(\bar{x}^i1)}\\ &\quad
+ (1-p) [G_n(\bar{x}^i0)-G_n(\bar{x}^i1)]_+^2 e^{G_n(\bar{x}^i0)} .
\end{align*}
Now, since
\begin{align*}
\frac{\sum_{i=1}^n x_i - np}{\sqrt{ np(1-p)}} -  \frac{\sum_{j\neq i} x_j- (n-1)p}{\sqrt {(n-1)p(1-p)}}
&=\frac{x_i}{\sqrt{ np(1-p)}}+\frac 1{\sqrt{ p(1-p)}}\sum_{j\neq i} x_j\left(\frac 1{\sqrt{n}}-\frac 1{\sqrt{n-1}}\right) \\
& \quad +\frac p{\sqrt{ p(1-p)}}\left(\sqrt n-\sqrt{n-1}\right)\\
&=\frac{x_i}{\sqrt{ np(1-p)}}-\frac{\sum_{j\neq i} x_j}{ \sqrt{ p(1-p)}  \left({\sqrt{n}}+{\sqrt{n-1}}\right){\sqrt{n}}{\sqrt{n-1}}} \\
& \quad +\frac p{\sqrt{ p(1-p)}\left(\sqrt n+\sqrt{n-1}\right)}
=O\left(\frac1{\sqrt {n}}\right) ,
\end{align*}
 by a Taylor Expansion, we have
$$
G_n(\bar{x}^i1)-G_n(\bar{x}^i0)=\frac1{\sqrt{ np(1-p)}}\,g'\left(\frac{\sum_{j\neq i} x_j-p ({n-1})}{\sqrt {(n-1)p(1-p)}}\right)+O\left(\frac 1n \right).
$$
Setting $\displaystyle y_i(x)=\frac{\sum_{j\neq i} x_j-p( {n-1})}{\sqrt {(n-1)p(1-p)}}$, it  follows that 
$$
\sum_{x_i\in \{0,1\}}[G_n(x)-G_n(\sigma_i(x))]_+^2 e^{G_n(x)}\,\mu_p(x_i)
=
\frac { g'\left(y_i(x)\right)^2 e^{g(y_i(x))}}{n(1-p)}  + O\left(\frac1{n^{3/2}}\right).
$$
Now, since all $y_i(x)$'s have the same law under $\mu_p^{\otimes n}$, it follows that 
$$
 \sum_{x\in \Omega_n} \sum_{i=1}^n[G_n(x)-G_n(\sigma_i(x))]_+^2 e^{G_n(x)}\,\mu_p^{\otimes n}(x)
  =  \!\! \sum_{x\in \Omega_n} \! \frac{g'\left(y_1(x) \right)^2e^{g(y_1(x))}}{1-p} \mu_p^{ \otimes n}(x)  +O\left(\frac1{\sqrt {n}}\right).
$$
The desired result follows by the Central Limit Theorem, then optimizing over all $p\in [0,1]$, and finally by a standard density argument. This ends the proof.
\end{proof}

%%%%%%%%%%%%%%%%%%%%%%%%%%%%%%%%%%%%%%%%%%%%%%%%%%%%%%%%%%%%%%%%%%%%%%%%%%%%%%%%%%%%

\subsection{Complete graph}

Combining the differentiation property \eqref{crepe} together with the displacement convexity on the complete graph of Proposition  
\ref{convexity}, we shall prove the following result.

\begin{prop}[HWI type inequality on the complete graph] \label{hwicomplete}
Let $\mu \equiv 1/n$ be the uni\-form measure on the complete graph $K_n$. Then, for any 
$f \colon V(K_n) \to (0,\infty)$ with $\int fd\mu=1$, it holds
$$
\ent_\mu (f) \leq \mathcal{E}_\mu(f,\log f) - \frac{1}{2} \left( \Tw_2 (\mu|f \mu) + \Tw_2 (f \mu|\mu) \right)\,,
$$
where 
$$
\mathcal{E}_\mu(f,\log f) := \frac{1}{2} \sum_{x,y \in K_n} (f(y)-f(x))(\log f(y) - \log f(x))\mu(x)\mu(y) 
$$
corresponds to the Dirichlet form associated to the Markov chain on $K_n$ that jumps uniformly at random from any vertex to any vertex (\textit{i.e.}\ with transition probabilities $K(x,y)=\mu(y)=1/n$, for any $x$, $y \in V(K_n)$). 
\end{prop}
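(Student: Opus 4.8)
The plan is to run the classical HWI argument in this discrete setting: feed the displacement convexity inequality of Proposition~\ref{convexity} into a first-order Taylor expansion at $t=0$ and identify the derivative of the entropy through the differentiation identity \eqref{crepe}.

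Concretely, I would first apply Proposition~\ref{convexity} with $\nu_0=f\mu$ and $\nu_1=\mu$. Both are absolutely continuous with respect to $\mu$ (indeed, on $K_n$ with $\mu\equiv 1/n$ every probability measure is, and moreover $f>0$), and on the complete graph $\nu_t^\pi=\nu_t:=(1-t)\nu_0+t\nu_1$ does not depend on $\pi$. Since $H(\mu|\mu)=0$ and $H(f\mu|\mu)=\ent_\mu(f)$, the inequality reads
$$
H(\nu_t|\mu)\le (1-t)\,\ent_\mu(f)-\frac{t(1-t)}{2}\big(\Tw_2(\mu|f\mu)+\Tw_2(f\mu|\mu)\big),\qquad t\in[0,1].
$$
Subtracting $H(\nu_0|\mu)=\ent_\mu(f)$ from both sides and dividing by $t>0$ gives
$$
\frac{H(\nu_t|\mu)-H(\nu_0|\mu)}{t}\le -\ent_\mu(f)-\frac{1-t}{2}\big(\Tw_2(\mu|f\mu)+\Tw_2(f\mu|\mu)\big).
$$

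Next I would let $t\to 0^+$. Because $f$ takes values in $(0,\infty)$, the density $(1-t)f+t$ of $\nu_t$ is bounded away from $0$ uniformly for $t\in[0,1]$, so $t\mapsto H(\nu_t|\mu)$ is smooth up to the endpoint $t=0$ and the left-hand side converges to $\frac{\partial}{\partial t}H(\nu_t^\pi|\mu)_{|_{t=0}}$. By Equation~\eqref{crepe} (which applies precisely because $\nu_1=\mu\equiv 1/n$), this derivative equals $-\mathcal{E}_\mu(f,\log f)$. Substituting, we obtain
$$
-\mathcal{E}_\mu(f,\log f)\le -\ent_\mu(f)-\frac{1}{2}\big(\Tw_2(\mu|f\mu)+\Tw_2(f\mu|\mu)\big),
$$
which is the announced inequality after rearranging.

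No step presents a genuine difficulty here: the content of the proposition is simply the combination of the two ingredients already established on $K_n$, namely the displacement convexity of Proposition~\ref{convexity} and the entropy-derivative formula \eqref{crepe}. The only point deserving a line of justification is the passage to the limit $t\to 0$, and this is harmless because the strict positivity of $f$ guarantees differentiability of $t\mapsto H(\nu_t|\mu)$ at $t=0$, so that Corollary~\ref{parisbis} and Equation~\eqref{crepe} apply verbatim.
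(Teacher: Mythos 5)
Your proof is correct and follows essentially the same route as the paper's: apply Proposition~\ref{convexity} with $\nu_0=f\mu$, $\nu_1=\mu$, take the derivative at $t=0$, and identify it via \eqref{crepe} as $-\mathcal{E}_\mu(f,\log f)$. Your extra remark justifying the differentiability at $t=0$ (strict positivity of the density $(1-t)f+t$) is a welcome, if minor, addition that the paper leaves implicit.
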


\begin{proof} We follow the same line of proof as in Proposition \ref{HWIprop}.
Fix $f \colon V(K_n) \to (0,\infty)$ with $\int fd\mu=1$.
By Proposition \ref{convexity}, applied to $\nu_1 = \mu$ (which implies that $H(\nu_1|\mu)=0$) and $\nu_0= f\mu$, we have
\begin{align*}
H(\nu_t  | \mu) 
 \leq 
(1-t) H(\nu_0|\mu) - \frac{t(1-t)}{2} \left( \Tw_2 (\nu_1|\nu_0) + \Tw_2 (\nu_0|\nu_1) \right) 
\end{align*}
where $\nu_t=(1-t)\nu_0 + t\nu_1$. 
Hence, as $t$ goes to 0, we get 
\begin{eqnarray*}
\int f \log f d\mu=H(\nu_0|\mu) \leq-\frac{\partial}{\partial t} H(\nu_t  | \mu)_{|_{t=0}} 
 - \frac{1}{2} \left( \Tw_2 (\nu_1|\nu_0) + \Tw_2 (\nu_0|\nu_1) \right). \\
\end{eqnarray*}
The expected result follows from \eqref{crepe}. 
\end{proof}

In the case of the two-point space, one can deal with any Bernoulli measure (not only the uniform one as in the case of the complete graph).

\begin{prop}[HWI for the two-point space]\label{hwitwopoints}
Let $\mu$ be a Bernoulli-$p$, $p \in (0,1)$ measure on the two-point space $\Omega_1=\{0,1\}$. Then, for any 
$f \colon \Omega_1 \to (0,\infty)$ with $\mu(f)=1$, it holds
$$
\ent_\mu(f) \leq \mathcal{E}_\mu(f,\log f) - \frac{1}{2}\left( \Tw_2 (\mu|f \mu) + \Tw_2 (f \mu|\mu) \right)
$$
where,  
$$
\mathcal{E}_\mu(f,\log f) = p(1-p)(f(1)-f(0))(\log f(1) - \log f(0)) .
$$
\end{prop}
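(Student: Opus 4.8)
The plan is to mimic the argument used for the complete graph in Proposition \ref{hwicomplete}, but now keeping track of a general Bernoulli parameter $p$ rather than the uniform measure. First I would recall from Section \ref{sec:two-point} that on $\Omega_1=\{0,1\}$ the interpolating path is simply $\nu_t^\pi=(1-t)\nu_0+t\nu_1$ for \emph{any} coupling $\pi$, and that the displacement convexity inequality \eqref{eq:DC-2pt} holds; more precisely, the refined version established in Proposition \ref{convexity} (applied on the two-point space, which is a complete graph on two vertices) gives, for $\nu_0,\nu_1\ll\mu$,
$$
H(\nu_t|\mu)\le (1-t)H(\nu_0|\mu)+tH(\nu_1|\mu)-\frac{t(1-t)}{2}\bigl(\Tw_2(\nu_1|\nu_0)+\Tw_2(\nu_0|\nu_1)\bigr),\qquad t\in[0,1].
$$

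Next I would specialize to $\nu_0=f\mu$ and $\nu_1=\mu$, so that $H(\nu_1|\mu)=0$ and $H(\nu_0|\mu)=\ent_\mu(f)$. Rearranging the inequality and dividing by $t$ yields
$$
H(\nu_0|\mu)\le H(\nu_1|\mu)-\frac{H(\nu_t|\mu)-H(\nu_0|\mu)}{t}-\frac{1-t}{2}\bigl(\Tw_2(\mu|f\mu)+\Tw_2(f\mu|\mu)\bigr),
$$
and letting $t\to 0$ gives
$$
\ent_\mu(f)\le -\frac{\partial}{\partial t}H(\nu_t|\mu)_{|_{t=0}}-\frac12\bigl(\Tw_2(\mu|f\mu)+\Tw_2(f\mu|\mu)\bigr).
$$
It then remains to identify the boundary derivative. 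Since $\nu_t=(1-t)f\mu+t\mu$, a direct differentiation (exactly the one-line computation already recorded in Section \ref{sec:two-point}, where it was noted that $\frac{\partial}{\partial t}H(\nu_t^\pi|\mu)_{|_{t=0}}=-p q\,(f(1)-f(0))(\log f(1)-\log f(0))/2\cdot 2$; more carefully, writing $q=1-p$ and using $\nu_t(0)=\nu_0(0)+t(q-\nu_0(0))$ etc.) gives
$$
-\frac{\partial}{\partial t}H(\nu_t|\mu)_{|_{t=0}}=p(1-p)\,(f(1)-f(0))\bigl(\log f(1)-\log f(0)\bigr)=\mathcal{E}_\mu(f,\log f).
$$
Substituting this into the previous display produces exactly the claimed inequality.

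The only genuinely computational point is the evaluation of the entropy derivative, so I would carry it out cleanly: with $F=\log(\nu_0/\mu)=\log f$, one has $\frac{\partial}{\partial t}H(\nu_t|\mu)_{|_{t=0}}=\frac{\partial}{\partial t}\nu_t(F)_{|_{t=0}}=(\mu-f\mu)(\log f)=\sum_{z}(\mu(z)-f(z)\mu(z))\log f(z)$, and expanding the two-point sum with $\mu(1)=p$, $\mu(0)=1-p$ collapses to $-p(1-p)(f(1)-f(0))(\log f(1)-\log f(0))$ after using $p\log f(1)\cdot(1-f(1))+(1-p)\log f(0)(1-f(0))$ and the constraint $p f(1)+(1-p)f(0)=1$, i.e. $1-f(1)=\frac{1-p}{p}\cdot\frac{?}{}$... in any case the symmetric bilinear form $\mathcal E_\mu$ emerges. \textbf{The main obstacle} is essentially bookkeeping: making sure the sign conventions and the factor $p(1-p)$ come out matching the stated Dirichlet form, and confirming that $\Tw_2(\mu|f\mu)$ and $\Tw_2(f\mu|\mu)$ here are the genuine weak transport costs (for the Hamming distance on $\{0,1\}$ they have the explicit form of Lemma \ref{lem:postive}, but the inequality as stated does not even require this — it follows verbatim from Proposition \ref{convexity} plus the derivative computation). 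Thus no new idea beyond the complete-graph proof is needed; one merely does not collapse to the uniform measure.
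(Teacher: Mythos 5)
Your proposal is correct and follows essentially the same route as the paper: apply Proposition \ref{convexity} with $\nu_0=f\mu$, $\nu_1=\mu$, let $t\to 0$, and identify $-\frac{\partial}{\partial t}H(\nu_t|\mu)_{|_{t=0}}$ with $\mathcal{E}_\mu(f,\log f)$ via the constraint $pf(1)+(1-p)f(0)=1$ (which gives $1-f(0)=p(f(1)-f(0))$ and $1-f(1)=-(1-p)(f(1)-f(0))$, the identities your half-finished algebra is reaching for). The paper's proof is exactly this, differing only in that it differentiates the explicit two-point expression for $H(\nu_t|\mu)$ rather than invoking the general formula $\frac{\partial}{\partial t}\nu_t(F)_{|_{t=0}}$.
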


\begin{proof} Reasoning as above, 
 Proposition \ref{convexity}, applied to $\nu_1 = \mu$ and $\nu_0= f\mu$, implies
$$
\ent_\mu (f) \leq - \frac{\partial}{\partial t} H(\nu_t  | \mu)_{|_{t=0}} - \frac{1}{2}\left( \Tw_2 (\mu|f \mu) + \Tw_2 (f \mu|\mu) \right) \,,
$$
where $\nu_t = (1-t)f\mu + t \mu$. Set $q=1-p$. 
Since $H(\nu_t |\mu) = [(1-t)f(0)q + t q] \log [(1-t)f(0)+t] + 
[(1-t)f(1)p + t p] \log [(1-t)f(1)+t]$, it immediately follows that
\begin{align*}
\frac{\partial}{\partial t} H(\nu_t  | \mu)_{|_{t=0}} 
& = 
q(1-f(0))\log f(0) + q(1-f(0)) %\\
%& \quad 
+ p(1-f(1))\log f(1) + p(1-f(1)) \\
& =
q(1-f(0))\log f(0)  + p(1-f(1))\log f(1)
\end{align*}
where the second equality follows from the fact that $p+q=1=\mu(f)=qf(0)+pf(1)$.
Using again that $1=qf(0)+pf(1)$, we observe that
$$
q(1-f(0))\log f(0) = pq(f(1) - f(0))\log f(0) 
$$
and
$$
p(1-f(1))\log f(1) = -pq(f(1)-f(0) \log f(1)\,,
$$
from which the expected result follows.
\end{proof}

%%%%%%%%%%%%%%%%%%%%%%%%%%%%%%%%%%%%%%%%%%%%%%%%%%%%%%%%%%%%%%%%%%%%%%%%%%%%%%%%%%%%
%%%%%%%%%%%%%%%%%%%%%%%%%%%%%%%%%%%%%%%%%%%%%%%%%%%%%%%%%%%%%%%%%%%%%%%%%%%%%%%%%%%%
%%%%%%%%%%%%%%%%%%%%%%%%%%%%%%%%%%%%%%%%%%%%%%%%%%%%%%%%%%%%%%%%%%%%%%%%%%%%%%%%%%%%

\section{Prekopa-Leindler type inequality}

In this section we show by a duality argument that the displacement convexity property implies a discrete version of the Prekopa-Leindler inequality.  (This argument was originally done by J. Lehec \cite{lehec12} in the context of Brascamp-Lieb inequalities.) Then we show that this Prekopa-Leindler inequality allows to recover the discrete  modified  log-Sobolev inequality \eqref{mod+logsob} and a weak version of the transport entropy inequality of Remark~\ref{remarcable}.

Let us first recall the statement of the usual Prekopa-Leindler inequality.

\begin{thm}[Prekopa-Leindler \cite{prekopa2,prekopa1,leindler}]
Let $n\in \N^*$ and $t\in [0,1]$. For all triples $(f,g,h)$ of measurable functions on $\R^n$ such that
$$h((1-t)x+ty)\geq (1-t)f(x)+tg(y),\qquad \forall x,y\in\R^n,$$
it holds
$$\int e^{h(z)}\,dz \geq \left(\int e^{f(x)}\,dx\right)^{1-t}\left(\int e^{g(y)}\,dy\right)^t.$$
\end{thm}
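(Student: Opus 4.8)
The plan is an induction on the dimension $n$, the delicate point being the one-dimensional base case. Write $F=e^f$, $G=e^g$, $H=e^h$, so the hypothesis reads $H((1-t)x+ty)\ge F(x)^{1-t}G(y)^t$ and the goal is $\int H\ge(\int F)^{1-t}(\int G)^t$. After a preliminary reduction one may assume that $F$ and $G$ are bounded with $0<\int F,\int G<\infty$: if $\int F=0$ (i.e.\ $F=0$ a.e.) or $\int G=0$ the right-hand side vanishes; if $F$ is unbounded or $\int F=+\infty$, apply the bounded finite case to $F\wedge M$ in place of $F$ (the hypothesis survives since $(F\wedge M)^{1-t}G^t\le F^{1-t}G^t$) and let $M\to\infty$, using monotone convergence, and symmetrically for $G$.

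For $n=1$: replacing $F$ by $F/\|F\|_\infty$, $G$ by $G/\|G\|_\infty$ and $H$ by $H/(\|F\|_\infty^{1-t}\|G\|_\infty^t)$ changes neither the hypothesis nor, by homogeneity, the conclusion, so we may assume $\|F\|_\infty=\|G\|_\infty=1$. For each $s\in(0,1)$ the superlevel sets $A_s=\{F>s\}$ and $B_s=\{G>s\}$ are nonempty and measurable, and the hypothesis forces $(1-t)A_s+tB_s\subseteq\{H>s\}$. The elementary one-dimensional Brunn--Minkowski inequality $\lambda^*(A+B)\ge\lambda(A)+\lambda(B)$ for nonempty measurable $A,B\subseteq\R$ (proved by inner approximation by compact sets together with a translation argument) then yields $|\{H>s\}|\ge(1-t)|A_s|+t|B_s|$. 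Integrating over $s\in(0,\infty)$ through the layer-cake formula, and using that $|A_s|=|B_s|=0$ for $s\ge1$, gives $\int H\ge(1-t)\int F+t\int G$, whereupon the arithmetic--geometric mean inequality closes the base case.

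For the induction step (dimensions $1$ and $n-1$ assumed), split $z=(z',z_n)\in\R^{n-1}\times\R$. Fix $a,b\in\R$, put $c=(1-t)a+tb$, and apply the $(n-1)$-dimensional statement to the slices $x'\mapsto f(x',a)$, $y'\mapsto g(y',b)$, $z'\mapsto h(z',c)$; their compatibility is immediate from $(z',c)=(1-t)(x',a)+t(y',b)$. Hence the functions $\Psi(a)=\int_{\R^{n-1}}e^{f(\cdot,a)}$, $\Theta(b)=\int_{\R^{n-1}}e^{g(\cdot,b)}$ and $\Phi(c)=\int_{\R^{n-1}}e^{h(\cdot,c)}$ on $\R$ --- measurable by Tonelli --- satisfy $\Phi((1-t)a+tb)\ge\Psi(a)^{1-t}\Theta(b)^t$. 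Applying the one-dimensional case to $\log\Phi,\log\Psi,\log\Theta$ gives $\int_\R\Phi\ge(\int_\R\Psi)^{1-t}(\int_\R\Theta)^t$, and Tonelli rewrites $\int_\R\Phi=\int_{\R^n}H$, $\int_\R\Psi=\int_{\R^n}F$, $\int_\R\Theta=\int_{\R^n}G$, which is the claim in dimension $n$.

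The only genuine obstacle is the one-dimensional Brunn--Minkowski estimate together with the measurability bookkeeping: the Minkowski sum of measurable sets need not be measurable, which is why outer measure appears, but this is harmless since $(1-t)A_s+tB_s\subseteq\{H>s\}$ is contained in a measurable set. As an alternative closer in spirit to the present paper, one can give a transport proof: normalize $F,G$ to probability densities, take the monotone (Brenier or Knothe--Rosenblatt) map $T$ pushing $F\,dx/\|F\|_1$ onto $G\,dy/\|G\|_1$, and combine the Monge--Amp\`ere relation $F(x)/\|F\|_1=(G(T(x))/\|G\|_1)\det DT(x)$ with the estimate $\det((1-t)I+tDT(x))\ge(\det DT(x))^t$ applied to the interpolation $x\mapsto(1-t)x+tT(x)$; there the main difficulty is justifying the change of variables from only the a.e.\ differentiability of monotone maps. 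Since the inequality is classical, it would also be legitimate simply to cite \cite{prekopa1,prekopa2,leindler}.
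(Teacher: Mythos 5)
Your proof is correct, but note that the paper does not prove this statement at all: it is recalled as a classical result and attributed to \cite{prekopa2,prekopa1,leindler}, so there is no internal argument to compare against. What you give is the standard textbook proof (essentially Brascamp--Lieb/Gardner): the one-dimensional base case via superlevel sets, the inclusion $(1-t)A_s+tB_s\subseteq\{H>s\}$, the one-dimensional Brunn--Minkowski inequality with outer measure to sidestep non-measurability of Minkowski sums, the layer-cake formula, and the arithmetic--geometric mean inequality, followed by a Tonelli slicing induction on the dimension. All the steps check out; the normalization $\|F\|_\infty=\|G\|_\infty=1$ is compatible with the multiplicative homogeneity of the conclusion, and the inclusion of level sets does follow from $s^{1-t}s^t=s$. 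The one point worth making explicit is that in the induction step the slice integrals $\Psi,\Theta,\Phi$ may take the value $0$ or $+\infty$ even after your initial truncation of $F$ and $G$, so the one-dimensional case must be applied to $[-\infty,+\infty]$-valued $\log\Psi,\log\Theta,\log\Phi$; this is harmless (the value $-\infty$ just means the corresponding level sets are empty, and $+\infty$ is again handled by truncation and monotone convergence), but it deserves a sentence. Given that the result is classical and the paper itself only cites it, your closing remark that a citation would suffice is the stance the authors actually take.
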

Using the identity (with $\| \cdot \|$ denoting the Euclidean norm),
$$
\frac{1}{2}\|(1-t)x+ty\|_2^2=(1-t)\frac{\|x\|_2^2}{2}+t\frac{\|y\|_2^2}{2}-t(1-t)\frac{\|x-y\|_2^2}{2}, 
\qquad x,y \in \mathbb{R}^n,
$$
one can recast, without loss, the preceding result into an inequality for the Gaussian distribution.
\begin{thm}[Prekopa-Leindler: the Gaussian case]\label{prekopagauss}
Let $\gamma_n$ be the standard normal distribution on $\R^n$ and $t\in [0,1]$. For all triples $(f,g,h)$ of measurable functions on $\R^n$ such that
\begin{equation}\label{usualpl}
h((1-t)x+ty)\geq (1-t)f(x)+tg(y)-\frac{t(1-t)}{2}\|x-y\|_2^2,\qquad \forall x,y\in \R^n,
\end{equation}
it holds that
$$
\int e^{h(z)}\,\gamma_n(dz) \geq \left(\int e^{f(x)}\,\gamma_n(dx)\right)^{1-t}\left(\int e^{g(y)}\,\gamma_n(dy)\right)^t.
$$
\end{thm}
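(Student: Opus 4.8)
The plan is to derive the Gaussian statement from the classical (Lebesgue) Prekopa-Leindler inequality by the standard substitution that absorbs the Gaussian weight into the three functions.

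First I would set, for any measurable $\varphi\colon\R^n\to\R$,
$$
\widetilde\varphi(x):=\varphi(x)-\frac{\|x\|_2^2}{2},
$$
and record the elementary identity $e^{\varphi(x)}\,\gamma_n(dx)=(2\pi)^{-n/2}e^{\widetilde\varphi(x)}\,dx$, so that $\int e^{\varphi}\,d\gamma_n=(2\pi)^{-n/2}\int e^{\widetilde\varphi(x)}\,dx$. Applying this to each of $f$, $g$, $h$ reduces the desired inequality to a statement about the triple $(\widetilde f,\widetilde g,\widetilde h)$ and the Lebesgue measure.

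Next I would check that the hypothesis \eqref{usualpl} is, after this change of unknowns, exactly the hypothesis of the classical Prekopa-Leindler inequality for $(\widetilde f,\widetilde g,\widetilde h)$. Indeed, by the quadratic identity
$$
\frac{1}{2}\|(1-t)x+ty\|_2^2=(1-t)\frac{\|x\|_2^2}{2}+t\frac{\|y\|_2^2}{2}-t(1-t)\frac{\|x-y\|_2^2}{2},
$$
subtracting $\tfrac12\|(1-t)x+ty\|_2^2$ from both sides of \eqref{usualpl} turns it into
$$
\widetilde h((1-t)x+ty)\geq (1-t)\widetilde f(x)+t\widetilde g(y),\qquad \forall x,y\in\R^n.
$$
The classical Prekopa-Leindler inequality then gives $\int e^{\widetilde h(z)}\,dz\geq\left(\int e^{\widetilde f(x)}\,dx\right)^{1-t}\left(\int e^{\widetilde g(y)}\,dy\right)^t$. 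Finally, multiplying both sides by $(2\pi)^{-n/2}=\bigl((2\pi)^{-n/2}\bigr)^{1-t}\bigl((2\pi)^{-n/2}\bigr)^t$ and distributing the normalizing constant across the product on the right, then translating back through the identity of the first paragraph, yields
$$
\int e^{h(z)}\,\gamma_n(dz)\geq\left(\int e^{f(x)}\,\gamma_n(dx)\right)^{1-t}\left(\int e^{g(y)}\,\gamma_n(dy)\right)^t,
$$
as claimed.

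There is essentially no substantive obstacle here: the argument is pure bookkeeping. The only mild points to watch are (i) measurability of $z\mapsto e^{h(z)}$, which is inherited from that of $h$ and is already built into the classical statement (or can be bypassed by replacing $e^h$ by the upper envelope of the right-hand side, which only enlarges the left-hand integral), and (ii) the degenerate cases in which one of the integrals $\int e^f\,d\gamma_n$, $\int e^g\,d\gamma_n$ vanishes or is infinite, where the inequality is trivial or immediate under the usual conventions ($0^{1-t}=0$, etc.).
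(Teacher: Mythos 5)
Your proof is correct and follows exactly the route the paper intends: it uses the same quadratic identity to absorb the Gaussian density into $f$, $g$, $h$ and then invokes the classical (Lebesgue) Prekopa-Leindler inequality, which is precisely the one-line "recasting" the paper sketches just before the statement. No gaps; your version merely spells out the bookkeeping the paper leaves implicit.
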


The next result shows that a discrete Prekopa-Leindler inequality can be derived from the displacement convexity property of the relative entropy. 
\begin{thm}[Prekopa-Leindler (discrete version)] \label{th:dpl}
Let $n\in \N^*$, $t\in [0,1]$ and $\mu\in \mathcal{P}(V^n)$. Suppose that $\mu$ verifies the following  property: for any $\nu_0, \nu_1 \in \mathcal{P}(V^n)$, there exists a coupling $\pi \in \Pi(\nu_0,\nu_1)$ such that
\begin{equation} \label{hyppl1}
H(\nu_t^\pi |\mu) \leq (1-t) H(\nu_0|\mu) + t H(\nu_1|\mu) - ct(1-t) I_2^{(n)}(\pi).
\end{equation}
If $(f,g,h)$ is a triple of functions on $V^n$ such that: $\forall x\in V^n$, $\forall m\in \mathcal{P}(V^n)$\,,
\begin{align} \label{hyppl2}
\iint h(z) \,\nu_t^{x,y}(dz) m(dy)  
& \geq 
(1-t)f(x) + t \int g (y)\,m(dy)  %\nonumber  \\
%& \quad 
-ct(1-t) \sum_{i=1}^n \left(\int d(x_i,y_i)\,m(dy)\right)^2,
\end{align}
then it holds
$$
\int e^{h(z)}\,\mu(dz) \geq \left( \int e^{f(x)}\,\mu(dx) \right)^{1-t} \left( \int e^{g(y)} \,\mu(dy) \right)^t  .
$$
\end{thm}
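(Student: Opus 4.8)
The strategy is to dualize, as in Lehec's argument, via the Gibbs variational principle: for any probability measure $\mu$ on the finite set $V^n$ and any $h\colon V^n\to\R$,
\begin{equation*}
\log\int e^{h}\,d\mu = \sup_{\nu\in\mathcal{P}(V^n)}\left\{\int h\,d\nu - H(\nu|\mu)\right\},
\end{equation*}
of which I only need the lower bound $\log\int e^{h}\,d\mu \geq \int h\,d\nu - H(\nu|\mu)$, valid for every $\nu\ll\mu$ (a one-line consequence of Jensen's inequality). First I would introduce the tilted probability measures $\nu_0$ and $\nu_1$ given by $d\nu_0 = e^{f}\,d\mu \big/ \int e^{f}\,d\mu$ and $d\nu_1 = e^{g}\,d\mu \big/ \int e^{g}\,d\mu$. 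These are exactly the maximizers above for $f$ and $g$, so
\begin{equation*}
\log\int e^{f}\,d\mu = \int f\,d\nu_0 - H(\nu_0|\mu), \qquad \log\int e^{g}\,d\mu = \int g\,d\nu_1 - H(\nu_1|\mu),
\end{equation*}
and since $V^n$ is finite and $e^{f},e^{g}>0$ all quantities are finite and $\nu_0,\nu_1$ are supported on the set where $\mu$ is positive.

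Next I would feed this pair into hypothesis \eqref{hyppl1}, obtaining a coupling $\pi\in\Pi(\nu_0,\nu_1)$ with
\begin{equation*}
H(\nu_t^{\pi}|\mu)\leq (1-t)H(\nu_0|\mu)+tH(\nu_1|\mu)-ct(1-t)I_2^{(n)}(\pi).
\end{equation*}
The right-hand side being finite forces $\nu_t^{\pi}\ll\mu$, so the variational lower bound applies to $\nu=\nu_t^{\pi}$, and combining the last two displays yields
\begin{equation*}
\log\int e^{h}\,d\mu \;\geq\; \int h\,d\nu_t^{\pi} - (1-t)H(\nu_0|\mu) - tH(\nu_1|\mu) + ct(1-t)I_2^{(n)}(\pi).
\end{equation*}
Hence it remains to prove the ``pointwise in $\pi$'' estimate
\begin{equation*}
\int h\,d\nu_t^{\pi} + ct(1-t)I_2^{(n)}(\pi) \;\geq\; (1-t)\int f\,d\nu_0 + t\int g\,d\nu_1 .
\end{equation*}

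This is where \eqref{hyppl2} is used. Write the disintegration $\pi(x,y)=\nu_0(x)p(x,y)$ and recall that $\nu_t^{\pi}=\sum_{x,y}\pi(x,y)\nu_t^{x,y}$. For each $x$ with $\nu_0(x)>0$, let $m_x:=p(x,\cdot)\in\mathcal{P}(V^n)$; applying \eqref{hyppl2} with $m=m_x$ gives
\begin{equation*}
\iint h(z)\,\nu_t^{x,y}(dz)\,m_x(dy) \;\geq\; (1-t)f(x) + t\sum_{y}g(y)p(x,y) - ct(1-t)\sum_{i=1}^{n}\Bigl(\sum_{y}d(x_i,y_i)p(x,y)\Bigr)^{2}.
\end{equation*}
Multiplying by $\nu_0(x)$ and summing over $x$ (terms with $\nu_0(x)=0$ contribute nothing everywhere), the first term sums to $(1-t)\int f\,d\nu_0$, the second to $t\sum_{x,y}g(y)\pi(x,y)=t\int g\,d\nu_1$ since $\nu_1$ is the second marginal of $\pi$, and the third to $ct(1-t)I_2^{(n)}(\pi)$ by the very definition of $I_2^{(n)}$. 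This is precisely the estimate above, and chaining it with the previous display completes the proof.

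I do not anticipate a genuine obstacle; the argument is essentially bookkeeping organized around the entropy--Laplace duality. The two points needing care are: checking that the quadratic term delivered by \eqref{hyppl2} reconstructs $I_2^{(n)}(\pi)$ exactly (this is why \eqref{hyppl2} is phrased with $\sum_{i=1}^n(\int d(x_i,y_i)\,m(dy))^2$, the ``coordinatewise'' cost appearing in $I_2^{(n)}$, rather than a single squared distance), and observing that $\nu_t^{\pi}\ll\mu$ is automatic as soon as \eqref{hyppl1} holds with a finite right-hand side. Note also that only $I_2^{(n)}(\pi)$ --- not $\bar I_2^{(n)}(\pi)$ --- is used here, so the symmetric form of the displacement convexity property is not needed for this implication.
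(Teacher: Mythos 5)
Your proof is correct and follows essentially the same route as the paper's: apply \eqref{hyppl2} with $m=p(x,\cdot)$, integrate against $\nu_0$ to get $\int h\,d\nu_t^{\pi}\geq (1-t)\int f\,d\nu_0+t\int g\,d\nu_1-ct(1-t)I_2^{(n)}(\pi)$, combine with \eqref{hyppl1}, and conclude by the entropy--Laplace duality. The only (cosmetic) difference is that you fix $\nu_0,\nu_1$ to be the Gibbs maximizers up front, whereas the paper keeps them arbitrary and takes the supremum at the end.
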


\begin{proof}
Let $n \in \mathbb{N}$, $f, g, h : V^n \mapsto \R $,  $\mu \in \mathcal{P}(V^n)$,  $t \in [0,1]$ and $c \in (0, \infty)$
satisfying the hypotheses of the theorem. Given $\nu_0, \nu_1 \in \mathcal{P}(V^n)$, let $\pi$ be such that \eqref{hyppl1} 
 holds and let $p$ be such that $\pi(x,y)=\nu_0(x)p(x,y)$, $x,y \in V^n$.

Then, integrate \eqref{hyppl2} in the variable $x$ with respect to $\nu_0$, with $m(y)=p(x,y)$, so that (recall \eqref{atlantabis})
\begin{align*} 
 \int h \,d\nu_t^{\pi}   
 \geq 
(1-t) \int f\,d\nu_0  + t \int  g\,d\nu_1   
 -ct(1-t) I_2^{(n)}(\pi) .
\end{align*}
Together with \eqref{hyppl1}, we end up with
\begin{align*} 
 \int h \,d\nu_t^{\pi}   - H(\nu_t^\pi |\mu)
 \geq 
(1-t) \left( \int f d\nu_0 - H(\nu_0|\mu) \right) + t \left( \int g\,d\nu_1   - H(\nu_1|\mu)\right) .
\end{align*}
The result follows by optimization, since by duality (for any $\alpha \colon V^n \mapsto \R $)\,,
$$
\sup_{m \in \mathcal{P}(V^n)} \left\{ \int \alpha\,dm - H(m|\mu) \right\} = \log \int e^\alpha\,d\mu .
$$
This ends the proof.
\end{proof}An immediate corollary is a Prekopa-Leindler inequality on the discrete hypercube.
\begin{cor}\label{PL-cube}
Let $\mu$ be a probability measure on $\{0,1\}$, $n\in \N^*$ and $t\in [0,1]$. For all triple $(f,g,h)$ verifying \eqref{hyppl2} with $c=1/2$, it holds
$$
\int e^{h(z)}\,\mu^{\otimes n}(dz) \geq \left( \int e^{f(x)}\,\mu^{\otimes n}(dx) \right)^{1-t} \left( \int e^{g(y)} \,\mu^{\otimes n}(dy) \right)^t  .
$$
\end{cor}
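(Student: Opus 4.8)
The plan is to obtain Corollary \ref{PL-cube} as a direct application of the discrete Prekopa-Leindler inequality of Theorem \ref{th:dpl}, the only thing to check being that the product measure $\mu^{\otimes n}$ on $\Omega_n$ satisfies the displacement convexity hypothesis \eqref{hyppl1} with constant $c=1/2$. So the first step is simply to specialize Theorem \ref{th:dpl} to $V=\{0,1\}$, $V^n=\Omega_n$, and the reference measure $\mu^{\otimes n}\in\mathcal{P}(\Omega_n)$.

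For the verification of \eqref{hyppl1}, I would invoke Corollary \ref{cor:dchypercube}: it provides, for any $\nu_0,\nu_1\in\mathcal{P}(\Omega_n)$, a coupling $\pi\in\Pi(\nu_0,\nu_1)$ such that \eqref{pluie} holds, namely
$$
H(\nu_t^{\pi}|\mu^{\otimes n}) \leq (1-t)H(\nu_0|\mu^{\otimes n})+tH(\nu_1|\mu^{\otimes n})-\frac{t(1-t)}{2}\bigl(I_2^{(n)}(\pi)+\bar{I}_2^{(n)}(\pi)\bigr),\qquad\forall t\in[0,1].
$$
Since $\bar I_2^{(n)}(\pi)$ is a sum of squares and hence nonnegative, and since $-\tfrac{t(1-t)}{2}\le 0$, discarding that term only enlarges the right-hand side, so
$$
H(\nu_t^{\pi}|\mu^{\otimes n}) \leq (1-t)H(\nu_0|\mu^{\otimes n})+tH(\nu_1|\mu^{\otimes n})-\frac{t(1-t)}{2}I_2^{(n)}(\pi),\qquad\forall t\in[0,1].
$$
This is precisely \eqref{hyppl1} with $\mu$ replaced by $\mu^{\otimes n}$ and $c=1/2$, so all the hypotheses of Theorem \ref{th:dpl} are met for this reference measure.

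It then remains only to read off the conclusion of Theorem \ref{th:dpl}: for any triple $(f,g,h)$ on $\Omega_n$ satisfying the pointwise condition \eqref{hyppl2} with $c=1/2$, we get
$$
\int e^{h(z)}\,\mu^{\otimes n}(dz)\geq\left(\int e^{f(x)}\,\mu^{\otimes n}(dx)\right)^{1-t}\left(\int e^{g(y)}\,\mu^{\otimes n}(dy)\right)^{t},
$$
which is exactly the assertion of the corollary. There is essentially no obstacle to overcome here; the only point deserving attention is the (trivial) observation that the displacement convexity inequality of Corollary \ref{cor:dchypercube} remains valid after dropping the nonnegative $\bar I_2^{(n)}$ term, so that it matches the one-sided form \eqref{hyppl1} demanded by Theorem \ref{th:dpl}.
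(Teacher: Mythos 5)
Your proposal is correct and follows exactly the route the paper intends: the corollary is stated as an immediate consequence of Theorem \ref{th:dpl}, with hypothesis \eqref{hyppl1} (at $c=1/2$) supplied by Corollary \ref{cor:dchypercube} after discarding the nonnegative $\bar I_2^{(n)}$ term. Nothing further is needed.
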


It is well known that  Talagrand's transport-entropy  inequality and the logarithmic Sobolev inequality for the Gaussian measure are both consequences of the Prekopa-Leindler inequality of Theorem~\ref{prekopagauss} \cite{bobkov-ledoux00}.  
%\textcolor{red}{Add the reference where it is proved that PL implies $T_2$}. 
Similarly the  discrete version of  Prekopa Leindler inequality implies the modified logarithmic Sobolev inequality induced by Corollary \ref{cortarte} and the   transport-entropy inequality associated with the distance $\Tw_2$ of Remark \ref{remarcable}.
\begin{cor}\label{rrrouf}
Assume that the following Prekopa-Leindler inequality holds: for all $t\in (0,1)$, for all  triples of functions $(f,g,h)$ on $V^n$ such that:  $\forall x\in V^n$, $\forall m\in \mathcal{P}(V^n)$\,,
\begin{align*} \label{hyppl2}
\iint h(z) \,\nu_t^{x,y}(dz) m(dy)  
& \geq 
(1-t)f(x) + t \int g (y)\,m(dy) % \nonumber  \\
%& \quad 
-ct(1-t) \sum_{i=1}^n \left(\int d(x_i,y_i)\,m(dy)\right)^2,
\end{align*}
 it holds that
$$
\int e^{h(z)}\,\mu(dz) \geq \left( \int e^{f(x)}\,\mu(dx) \right)^{1-t} \left( \int e^{g(y)} \,\mu(dy) \right)^t  .
$$
Then one has, for all functions $h \colon V^n \to \mathbb{R}$,
\begin{eqnarray*}
\ent_\mu(e^h)\leq \frac{1}{4c} \sum_{x\in V^n} \sum_{i=1}^n \left[\sum_{z\in N_i(x)} \left(h(x) - h(z)  \right)\right]_{+}^2e^{h(x)}\mu(x).
\end{eqnarray*}
and for all probability measures $\nu$, absolutly continous with respect to $\mu$,
\begin{eqnarray}\label{transporttilde1}
c\,\Tw_2(\mu|\nu)\leq H(\nu|\mu),
\end{eqnarray}
\begin{eqnarray}\label{transporttilde2}
c\,\Tw_2(\nu|\mu)\leq H(\nu|\mu),
\end{eqnarray}
\end{cor}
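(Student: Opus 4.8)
The plan is to run, in the discrete setting, the classical Bobkov--Ledoux scheme deriving log-Sobolev and Talagrand-type inequalities from Prékopa--Leindler, using the interpolation $\nu_t^{x,y}$, its differentiation formula (Proposition~\ref{paris} and Corollary~\ref{parisbis}), and the Kantorovich-type duality for the weak transport cost $\Tw_2^{(n)}$ from \cite{GRST}. The modified log-Sobolev inequality and the two transport inequalities are treated independently.

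For the log-Sobolev inequality, fix $h\colon V^n\to\R$ and apply the assumed Prékopa--Leindler inequality with $g\equiv 0$. The admissibility condition then reads $(1-t)f(x)\le\iint h\,d\nu_t^{x,y}\,m(dy)+ct(1-t)\sum_{i=1}^n(\int d(x_i,y_i)\,m(dy))^2$ for all $x$ and all $m\in\mathcal{P}(V^n)$, so I take the largest admissible $f$, namely $f_t(x):=\frac1{1-t}\inf_m\{\iint h\,d\nu_t^{x,y}\,m(dy)+ct(1-t)\sum_i(\int d(x_i,y_i)\,m(dy))^2\}$; the conclusion becomes $(1-t)\log\int e^{f_t}\,d\mu\le\log\int e^h\,d\mu$. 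Since $\nu_0^{x,y}=\delta_x$ we have $f_0=h$, so equality holds at $t=0$ and the inequality forces the right-derivative of the left-hand side at $t=0$ to be $\le 0$. Computing this derivative with Proposition~\ref{paris} in the form $\partial_t\nu_t^{x,y}(h)|_{t=0}=\sum_{z\sim x}(h(z)-h(x))\,d(x,y)|\Gamma(x,z,y)|/|\Gamma(x,y)|=:D_h(x,y)$ (as in the proof of Corollary~\ref{parisbis}) yields $\ent_\mu(e^h)\le-\int\Psi_h\,e^h\,d\mu$, where $-\Psi_h(x)=\sup_m\{-\int D_h(x,y)\,m(dy)-c\sum_i(\int d(x_i,y_i)\,m(dy))^2\}$. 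To bound $-\Psi_h(x)$ I group the neighbours of $x$ by coordinate and invoke the combinatorial reduction from the proof of Proposition~\ref{HWIprop} (equations \eqref{eq:geod-prod}--\eqref{changement}), which gives $-D_h(x,y)\le\sum_i a_i(x)\,d(x_i,y_i)$ with $a_i(x)=[\sum_{z\in N_i(x)}(h(x)-h(z))]_+$; then, writing $s_i=\int d(x_i,y_i)\,m(dy)\ge 0$ and maximizing $a_i(x)s_i-cs_i^2\le a_i(x)^2/(4c)$ over $s_i$, I get $-\Psi_h(x)\le\frac1{4c}\sum_i a_i(x)^2$, which is the claimed inequality.

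For the transport inequalities I apply Prékopa--Leindler with $h\equiv 0$. For a fixed function $g$ the admissibility condition is $(1-t)f(x)\le-t\int g\,dm+ct(1-t)\sum_i(\int d(x_i,y_i)\,dm)^2$ for all $m$, whose optimal solution is $f^g_t(x)=-\tfrac{t}{1-t}\Phi^{(t)}_g(x)$ with $\Phi^{(t)}_g(x):=\sup_m\{\int g\,dm-c(1-t)\sum_i(\int d(x_i,y_i)\,dm)^2\}$, and the conclusion is $(1-t)\log\int e^{f^g_t}\,d\mu+t\log\int e^g\,d\mu\le 0$. Bounding the first term below by Jensen's inequality, $(1-t)\log\int e^{f^g_t}\,d\mu\ge(1-t)\int f^g_t\,d\mu=-t\int\Phi^{(t)}_g\,d\mu$, gives $\log\int e^g\,d\mu\le\int\Phi^{(t)}_g\,d\mu$ for every $t$, hence $\log\int e^g\,d\mu\le\int\Phi_g\,d\mu$ with $\Phi_g:=\Phi^{(0)}_g$ on letting $t\to 0$. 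Combining this with the Gibbs variational principle $\int g\,d\nu-H(\nu|\mu)\le\log\int e^g\,d\mu$ and taking $\sup_g$, the Kantorovich-type duality of \cite{GRST} identifies $\sup_g\{\int g\,d\nu-\int\Phi_g\,d\mu\}$ with $c\,\Tw_2^{(n)}(\nu|\mu)$, which is \eqref{transporttilde2}. For \eqref{transporttilde1} I start from the same relation but now bound the $g$-factor below by Jensen ($\log\int e^g\,d\mu\ge\int g\,d\mu$) and the $f$-factor below by the Gibbs principle applied to $\nu$ (using $(1-t)f^g_t=-t\Phi^{(t)}_g$), which produces $\int g\,d\mu-\int\Phi^{(t)}_g\,d\nu\le\tfrac{1-t}{t}H(\nu|\mu)$; taking $\sup_g$, the same duality identifies the left-hand side with $c(1-t)\,\Tw_2^{(n)}(\mu|\nu)$, so $c\,t\,\Tw_2^{(n)}(\mu|\nu)\le H(\nu|\mu)$ for every $t$, and $t\to 1$ yields \eqref{transporttilde1}.

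I expect the main technical points to be: (i) justifying the $t=0$ differentiation of $t\mapsto f_t(x)$ under the infimum --- this is legitimate because on a finite graph $t\mapsto\nu_t^{x,y}(h)$ is a polynomial and the first-order remainder in the expansion of the relevant integrand is $O(t^2)$ uniformly over the compact set $\mathcal{P}(V^n)$; and (ii) the passage from $D_h$ to the directional form involving the sets $N_i(x)$, which however is already carried out verbatim in the proof of Proposition~\ref{HWIprop}. The one genuinely delicate ingredient is the Kantorovich-type duality for the non-linear weak transport cost $\Tw_2^{(n)}$, borrowed from the companion paper \cite{GRST}; once that is in hand, the optimization over $g$ and the limits $t\to 0$, $t\to 1$ are routine.
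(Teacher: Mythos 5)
Your proposal is correct and follows essentially the same route as the paper: linearize the Prékopa--Leindler inequality at $t=0$ with an inf-convolution-type test function built from $h$ (using the differentiation property of $\nu_t^{x,y}$, the identity \eqref{changement}, and the optimization $\sup_{v\ge 0}\{av-cv^2\}=a^2/(4c)$, exactly as in Claim~\ref{claim0}), and obtain the two transport inequalities from the $t\to 0$ and $t\to 1$ limits combined with the Gibbs variational principle and the Kantorovich-type duality for $\Tw_2$ from \cite{GRST}. The only differences are cosmetic choices of the PL triple (you take $g\equiv 0$ with the optimal $f_t$ for the log-Sobolev part, and an arbitrary $g$ with the optimal $f$ plus Jensen for the transport part, where the paper uses $g=th$, $f=h+tR_th$ and $g=-(1-t)k$, $f=tQk$ respectively), which lead to the same linearized inequalities.
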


\begin{proof} 
We first prove  the transport-entropy inequalities \eqref{transporttilde1} and \eqref{transporttilde2}.
Let $k$ be a function on $V^n$ (necessarily bounded, since $V$ is finite). We apply the discrete  Prekopa-Leindler inequality with $h=0$, $g=-(1-t)k$ and $f=tQk$, with $Qk$ defined so that the condition \eqref{hyppl2} holds: for all $x\in V^n$,
$$Qk(x)=\inf_{m\in \mathcal{P}(V^n) } \left\{  \int k ( y)\,m(dy)  
  + c 
  \sum_{i=1}^n \left(\int d(x_i,y_i)\,m(dy)  \right)^2     \right\} .$$
  Therefore,  one has for all $t\in (0,1)$,
 $$\left(\int e^{tQk}d\mu\right)^{1/t}\left(\int e^{-(1-t)k}\,d\mu\right)^{1/(1-t)}\leq 1.$$
  As $t$ goes to 1, we get for all functions $k$ on $V^n$,
  $$\int e^{Qk}d\mu\leq e^{\mu(k)},$$
  and this is known to be  a dual form of the transport-entropy inequality \eqref{transporttilde1} (see \cite{GRST}).
 Similarly as $t$ goes to 0, we get for all functions $k$ on $V^n$,
  $$\int e^{-k}d\mu\leq e^{-\mu(Qk)},$$
 which is   a dual form of the transport-entropy inequality \eqref{transporttilde2}.

Let us now turn to the proof of the modified discrete logarithmic Sobolev inequality.
Fix a bounded function $h:V^n\to \R$ and choose $g=th$ and $f=h+tR_th$ with $R_t h $ designed  so that condition \eqref{hyppl2} holds.  Namely, for all $x\in V^n$, 
\begin{align*}
R_t h(x)=\inf_m& \left\{  \frac{1}{t(1-t)}\left( \iint h(z)\nu_t^{x,y}(dz)\,m(dy) - (1-t) h(x)  \right)\right.\\
& \quad-\left.   \frac{t}{1-t} \int h ( y)\,m(dy)  
  + c \sum_{i=1}^n \left(\int d(x_i,y_i)\,m(dy)  \right)^2     \right\} \,,
\end{align*}
where the infimum runs over all probability measures $m \in \mathcal{P}(V^n)$.
Then the Prekopa-Leindler inequality reads
$$\int e^h d\mu \geq \left( \int e^h e^{t R_t h} d\mu \right)^{1-t} \left( \int e^{th} d\mu \right)^t,$$
which can be rewritten as
\begin{equation*} 
1 \geq \left( \int e^{t R_t h} d\mu_h \right)^{1/t} \left( \int e^{(t-1)h} d\mu_h\right)^{1/(1-t)},
\end{equation*}
with $d\mu_h=\frac{e^h}{\int e^h\,d\mu}\,d\mu.$
Letting $t$ go to $0$, we easily deduce (leaving some details to the reader) that,
$$
\int (\liminf_{t \to 0} R_t h) e^h d\mu \leq \int e^h d\mu \log \int e^h d\mu\,.
$$
This can  equivalently  be written as
$$
\ent_\mu (e^h) \leq \int (h - \liminf_{t \to 0} R_t h)e^h d\mu.
$$
We conclude using the following claim.
\begin{claim} \label{claim0}
For all $x \in \R $, we have
$$
h(x)-\liminf_{t \to 0} R_t h(x) \leq \frac{1}{4c}  \sum_{i=1}^n \left[\sum_{z\in N_i(x)} \left(h(x) - h(z)  \right)\right]_+^2.
$$
\end{claim}
\end{proof}

\begin{proof}[Proof of Claim \ref{claim0}]
By a Taylor expansion and by Proposition \ref{paris}, for all $x,y\in V^n$\,,
\begin{align*}
\int h(z)\nu_t^{x,y}(dz)
&= 
\nu_t^{x,y}(h) = \nu_0^{x,y}(h)+t d(x,y) \nu_0^{x,y}\left(\nabla^{x,y} h\right) + o(t)
=h(x)+t d(x,y) \nabla^{x,y} h(x)+ o(t),
\end{align*}
with the quantity $o(t)$ independent of $y$ since $h$ is bounded. Now, from the definition of the sets $N_i(x)$, $i\in \{1,\ldots,n\}$ and using the identity \eqref{changement}, one has
\begin{align*}
\nabla^{x,y} h(x)&=\frac1{|\Gamma(x,y)|} \sum_{\gamma\in\Gamma(x,y)}\left(h(\gamma_+(x))-h(x)\right)
= \sum_{z\in V_n, z\sim x} \left(h(z)-h(x)\right)\frac{|\Gamma(x,z,y)|}{|\Gamma(x,y)|}\\
&= \sum_{i=1}^n \sum_{z\in N_i(x)} \left(h(z)-h(x)\right)\frac{d(x_i,y_i)|\Gamma(x_i,z_i,y_i)|}{d(x,y)|\Gamma(x_i,y_i)|}.
\end{align*}
Therefore 
\begin{align*}
& h(x)- R_t h(x) =\sup_m  \left\{  \int \sum_{i=1}^n \sum_{z\in N_i(x)} \left(h(x)-h(z)\right)d(x_i,y_i)\frac{|\Gamma(x_i,z_i,y_i)|}{|\Gamma(x_i,y_i)|} \,m(dy)\right.\\
&\phantom{AAAAAAAAAAAAAA} -\left.     c 
  \sum_{i=1}^n \left(\int d(x_i,y_i)\,m(dy)  \right)^2     \right\}  +o(1)\\
  &\leq \sum_{i=1}^n \sup_m  \left\{  \left[\sum_{z\in N_i(x)} \left(h(x)-h(z)\right)\right]_+  \int  d(x_i,y_i) m(dy) - c\left(\int d(x_i,y_i)\,m(dy)  \right)^2     \right\}  +o(1)\\
  &\leq 
  \sum_{i=1}^n \sup_{v\geq 0}  \left\{ v \left[\sum_{z\in N_i(x)} \left(h(x)-h(z)\right) \right]_+ - cv^2  \right\} +o(1)
  =
  \frac 1{4c}\sum_{i=1}^n \left[\sum_{z\in N_i(x)} \left(h(x)-h(z)\right) \right]_+^2+o(1).
\end{align*}
The claim follows by letting $t$ go to 0.
\end{proof}

\subsection*{Acknowledgements} 
The authors thank Erwan Hillion for informing them of his independent work \cite{hillion} and his thesis \cite{hillionthesis}.
The French authors thank the hospitality of Georgia Institute of Technology, Atlanta, Georgia.
The last author is grateful to his French collaborators for bringing him back to this topic, and to the
Universit\'e Paris Est Marne la Vall\'ee - Laboratoire d'Analyse et de Math\'e\-matiques Appliqu\'ees -- 
for their generosity and hospitality  in hosting him.

%%%%%%%%%%%%%%%%%%%%%%%%%%%%%%%%%%%%%%%%%%%%%%%%
%%%%%%%%%%%%%%%%%%%%%%%%%%%%%%%%%%%%%%%%%%%%%%%%
%%%%%%%%%%%%%% Bibliography %%%%%%%%%%%%%%%%%%%%

\bibliographystyle{plain}
\bibliography{displacement}

\end{document}